\documentclass[11pt,oneside,reqno]{amsart}
\usepackage{mathpazo} 
\linespread{1.05}       
\normalfont
\usepackage[T1]{fontenc}

\usepackage[utf8]{inputenc}
\usepackage{amsmath}
\usepackage{amsfonts}
\usepackage{amssymb}
\usepackage{amsthm}
\usepackage{tikz}
\usepackage{tikz-cd}
\usepackage[all]{xy}
\usepackage[margin=1.2in]{geometry}
\usepackage{amscd}
\usepackage[shortlabels]{enumitem}

\DeclareMathOperator{\Hom}{Hom}

\newcommand{\angles}[1]{\left\langle #1 \right\rangle}

\input xy
\xyoption{all}
\thispagestyle{empty}

\theoremstyle{definition}
\newtheorem{mydef}{\textbf{Definition}}[section]
\newtheorem{myeg}[mydef]{\textbf{Example}}

\newtheorem{rmk}[mydef]{\textbf{Remark}}

\theoremstyle{plain}
\newtheorem{mythm}[mydef]{\textbf{Theorem}}

\newtheorem*{nothma}{\textbf{Theorem A}}
\newtheorem*{nothmb}{\textbf{Theorem B}}
\newtheorem*{nothmc}{\textbf{Theorem C}}
\newtheorem*{nothmd}{\textbf{Theorem D}}
\newtheorem*{nothme}{\textbf{Theorem E}}

\newtheorem{lem}[mydef]{\textbf{Lemma}}
\newtheorem{pro}[mydef]{\textbf{Proposition}}

\newtheorem{cor}[mydef]{\textbf{Corollary}}


\newcommand{\T}{\mathbb{T}}
\newcommand{\TT}{\T}

\newcommand{\N}{\mathbb{N}}

\newcommand{\multipleaffil}[3]{%
  \address{%
    \begin{minipage}[t]{\textwidth}
      #1 \\
      #2 \\
      #3 
    \vspace{0.1cm}
    \end{minipage}
  }
}


\begin{document}

\title{Tropical representations and valuated matroids}
\author{Jaiung Jun}
\multipleaffil{Department of Mathematics, State University of New York at New Paltz, NY, USA}{and}{Institute for Advanced Study, Princeton, NJ, USA}
\email{junj@newpaltz.edu, junj@ias.edu}

\author{Kalina Mincheva}
\address{Department of Mathematics, Tulane University, New Orleans, LA 70118, USA}
\email{kmincheva@tulane.edu}

\author{Jeffrey Tolliver}
\address{}
\email{jeff.tolli@gmail.com}
\makeatletter
\@namedef{subjclassname@2020}{%
	\textup{2020} Mathematics Subject Classification}
\makeatother

\subjclass[2020]{12K10, 14T10, 05B35, 05E10} 
\keywords{matroid, valuated matroid, representation, tropical geometry, tropical representation, tropical linear space, weakly free module, quasi-free module}
\thanks{}

\begin{abstract}
We explore several facets of tropical subrepresentations of a linear representation of a group over the tropical semifield $\mathbb{T}$. A key role in the study of tropical subrepresentations is played by two types of modules over a semiring: weakly free and quasi-free modules. We also investigate subgroups of $\text{GL}_n(K)$ for $K=\mathbb{T}$, $ \mathbb{R}_{\geq 0}$, and automorphisms of weakly free modules and tropical prevarieties defined by tropical linear equations. As an application of our results, we provide an intrinsic description of tropical subrepresentation via certain quasi-free modules, and prove that a tropical subrepresentation is equivalent to a valuated matroidal representation. 
\end{abstract}

\maketitle

\tableofcontents

\section{Introduction}

The representation theory of groups studies how a given group can act on a vector space. One may ask what happens if vector spaces are replaced with other mathematical objects such as finite sets, graphs or matroids. For instance, by considering group actions on finite graphs, one can develop an interesting and rich story of Galois theory for finite graphs (see \cite{terras2010zeta}). 

In this paper, we replace vector spaces with their analogues over idempotent semifields. Linear spaces over idempotent semifields are closely related to (valuated) matroids. Some of the more recent results on this topic are  \cite{frenk2013tropical} and \cite{giansiracusa2018grassmann}. 
The relation between linear spaces over idempotent semifields and (valuated) matroids suggests a relation between representation theory over idempotent semifields and representations of a group via its action on matroids. 

A starting point for developing representation theory of groups over an idempotent semifield $K$ is understanding $\text{GL}_n(K)$. It is well-known that the structure of $\text{GL}_n(K)$ is relatively simple as follows
\begin{equation}\label{eq: GL}
 \text{GL}_n(K) \simeq  S_n \ltimes (K^\times)^n,
\end{equation}
where $S_n$ is the symmetric group and $K^\times$ is the multiplicative group of units of $K$. Regardless of the simple structure of $\text{GL}_n(K)$ we obtain non-trivial results.

In \cite{giansiracusa2020matroidal}, Giansiracusa and Manaker introduce a notion of tropical subrepresentation of a linear representation $\rho:G \to \text{GL}_n(K)$ of a group $G$ as a $G$-invariant tropical linear subspace of the space $K^n$ over an idempotent semifield $K$. Equivalently, dimension $d$ tropical subrepresentations of a linear representation are the same as the set of fixed points of the $G$-action on the Dressian $\text{Dr}(d,n)$ (\cite[Corollary 3.0.4]{giansiracusa2020matroidal}).

Let $\mathbb{B}=\{0,1\}$ denote the Boolean semifield. From \eqref{eq: GL}, we see that $\text{GL}_n(\mathbb{B})=S_n$, and so in this case tropical subrepresentations are precisely group homomorphisms $G \to \text{Aut}(M)$ for some matroid $M$ associated to a tropical linear space (\cite[Proposition 3.1.1]{giansiracusa2020matroidal}). 

When one considers the tropical subrepresentations of the regular representation $\mathbb{B}[G]$, the story becomes more interesting. These are precisely the matroids with ground set $G$ for which left-multiplication by each element of $G$ is a matroid automorphism. Giansiracusa and Manaker show that when $G$ is abelian, there is an interesting interplay between number theory and matroid theory. For instance, they prove that if $G$ is abelian, then $G$ is cyclic if and only if the tropical representation $U_{2,G}$ is realizable \footnote{A tropical subrepresentation is said to be \emph{realizable} if it is the tropicalization of a monomial representation.}, where $U_{2,G}$ is the uniform matroid of rank 2 with the underlying set $G$. 

We have a full understanding of low-dimensional tropical representations: In \cite{marcus2024tropical}, Marcus and Phillips classify two-dimensional tropical subrepresentations of $\mathbb{B}[G]$ and in our paper \cite{JMTmatroidsPart1}, we classify three-dimensional tropical subrepresentations of $\mathbb{B}[G]$. 

In order to prove those results in \cite{JMTmatroidsPart1} we develop rudiments of representation theory over semifields. In this paper we aim to further understand tropical representations but we take a module-theoretic point of view. 
We introduce weakly free and quasi-free modules and give examples. We see that some weakly free and quasi-free modules arise very naturally from combinatorial objects (e.g. polyhedral cones) or in the process of tropicalization. We study the automorphism groups of weakly (or quasi-) free modules because tropical subrepresentations turn out to be closely related to understanding the structure of $\text{Aut}(V)$ when $V$ is a tropical linear space. To be precise, if $V$ is a tropical linear space in $K^n$, $\text{Aut}(V)$ is the set of automorphisms $\phi$ of $K^n$ such that $\phi$ and $\phi^{-1}$ preserve $V$. In particular, Aut(V) depends on the embedding of $V$ into an ambient space $K^n$. We later consider a quasi-free module $Q_M$. Using $Q_M$ we can make certain computations of automorphism groups independent of the embedding of $V$.

We also generalize the setting of Giansiracusa and Manaker in \cite{giansiracusa2020matroidal} and their ``matroidal representations'' to ``valuated matroidal representations'' as the combinatorial equivalent of tropical subrepresentations. 

\subsection{Summary of results}

We start by introducing and studying weakly free and quasi-free modules over a semiring $R$ and their properties. These two types of modules play an important role in the study of tropical subrepresentations.

Briefly, a finitely generated module is said to be \emph{weakly free} if there is a unique transition matrix for any two minimal generating sets (Definition \ref{definition: weakly free}). 

On the other hand, elements $x_1,\dots,x_n$ of a module $M$ over a semiring $R$ are said to be \emph{quasi-independent} if an equation of the form
\[
x_i = \sum_j c_jx_j
\]
implies $c_j = \delta_{ij}$. A \emph{quasi-basis} is a quasi-independent set of generators, and $M$ is said to be \emph{quasi-free} of rank $n$ if it has a quasi basis with $n$ elements (Definition \ref{definition: quasi-free}). We prove that any two quasi-bases have the same cardinality (Lemma \ref{definition: quasi-free}), and hence the notion of rank is well-defined for quasi-free modules. One can easily observe that any free module is quasi-free, but not every quasi-free module is free (Example \ref{example: quasi-free but not free}). However, one can show that a module is free if and only if it is quasi-free and projective (Proposition \ref{proposition: free, quasi-free}). 

Quasi-free modules over a zero-sum-free semifield (and hence idempotent semifields or $\mathbb{R}_{\geq 0}$) are weakly free. In particular, in most of the cases that we are interested in, quasi-freeness will be a stronger condition than weak freeness. 

The following characterizes quasi-free modules over $\mathbb{B}$.

\begin{nothma}[Lemma \ref{lemma: atomic}]
Let $M$ be a finitely generated $\mathbb{B}$-module.  Then $M$ is quasi-free if and only if it is atomic when viewed as a lattice. In particular, for any matroid $M$, one obtains a quasi-free module over $\mathbb{B}$ from the lattice of flats of $M$.    
\end{nothma}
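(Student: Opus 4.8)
The plan is to translate the module-theoretic notions into lattice-theoretic language and then verify both implications directly. First I would note that a finitely generated $\mathbb{B}$-module $M$ has at most $2^n$ elements, where $n$ is the size of a generating set, since every element is a sum of a subset of the generators; thus $M$ is a finite join-semilattice with least element $0$, hence a lattice, with the order given by $x \leq y \iff x + y = y$, and its atoms are the minimal nonzero elements.

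The first real step is to reformulate quasi-independence over $\mathbb{B}$. Since every coefficient lies in $\{0,1\}$, a relation $x_i = \sum_j c_j x_j$ is just $x_i = \sum_{j \in S} x_j$ for some $S \subseteq \{1,\dots,n\}$, and $\sum_{j \in S} x_j \leq x_i$ holds precisely when $x_j \leq x_i$ for every $j \in S$. Splitting into the cases $i \in S$ and $i \notin S$, one checks that a generating set $x_1,\dots,x_n$ is a quasi-basis if and only if its elements are nonzero and form an antichain (i.e.\ $x_j \not\leq x_i$ whenever $j \neq i$). So the statement to prove reduces to: $M$ is atomic if and only if $M$ admits a generating antichain of nonzero elements.

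For the ``atomic $\Rightarrow$ quasi-free'' direction I would take $B$ to be the set of atoms of $M$: these are nonzero, pairwise incomparable, and (by atomicity) generate $M$, so $B$ is a quasi-basis. For the converse, suppose $B = \{x_1,\dots,x_n\}$ is a generating antichain of nonzero elements; the key point is that each $x_i$ must be an atom. Indeed, if $0 < y < x_i$, then writing $y = \sum_{j \in S} x_j$ with $S \neq \emptyset$ (possible since $B$ generates and $y \neq 0$) gives $x_j \leq y < x_i$ for every $j \in S$, forcing $x_j < x_i$, which contradicts the antichain property when $j \neq i$ and is absurd when $j = i$. Hence every element of $B$, and therefore every element of $M$ (being a join of elements of $B$), is a join of atoms, so $M$ is atomic. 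For the final assertion, the lattice of flats of a matroid is atomic — its atoms being the rank-one flats — so by the equivalence just established it yields a quasi-free $\mathbb{B}$-module.

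I expect the only delicate point to be the reformulation in the second step: one has to handle carefully the possibility that the index $i$ itself appears in the support $S$ of the defining relation, since in that case the relation $x_i = x_i + \sum_{j \in S \setminus \{i\}} x_j$ encodes the comparability $\sum_{j \in S \setminus \{i\}} x_j \leq x_i$ rather than an equality among distinct generators. Once that translation is in place, both implications are short and elementary, and in fact the argument shows a little more — namely that the quasi-basis of a quasi-free $\mathbb{B}$-module is unique, equal to the set of atoms.
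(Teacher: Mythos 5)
Your proof is correct, and it takes a genuinely different, arguably more self-contained route than the paper's.  The paper's proof first quotes the general fact (their Lemma \ref{lemma: quasi-free implies weakly free}) that a quasi-basis is a minimal generating set, then invokes Example \ref{example: B-modules are weakly free} to identify the minimal generating set with the set of join-irreducibles, and finally uses the standard lattice fact that ``atomic $\Leftrightarrow$ every join-irreducible is an atom.''  You instead reformulate quasi-independence over $\mathbb{B}$ directly as ``nonzero antichain,'' which is a clean observation that the paper never states explicitly, and from there both implications are short and do not need the machinery about join-irreducibles or weak freeness.  Your extraction of the equivalence
\[
\text{quasi-independent over } \mathbb{B} \iff \text{nonzero and pairwise incomparable}
\]
is correct (including the careful handling of the case $i \in S$, where the relation $x_i = x_i + \sum_{j\in S\setminus\{i\}} x_j$ encodes the inequality $\sum_{j\in S\setminus\{i\}} x_j \le x_i$), and so is the ``$0 < y < x_i$ leads to a contradiction'' step in the converse.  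One small remark: you should make explicit (as you do implicitly) that the finitely generated $\mathbb{B}$-module $M$, being a finite join-semilattice with bottom $0$, is automatically a complete lattice, so ``atomic'' makes sense.  Your closing observation — that the argument shows the quasi-basis of a quasi-free $\mathbb{B}$-module is unique and equals the set of atoms — is a nice strengthening that the paper obtains only indirectly via the uniqueness of the minimal generating set in Example \ref{example: B-modules are weakly free}.
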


Our next result concerns subgroups of $\text{GL}_n(K)$ for an idempotent semifield $K$. We introduce a class of subgroups of $\text{GL}_n(K)$, called \emph{linear subgroups} - these are subgroups of matrices satisfying some linear equations over $K$ (Definition \ref{definition: linear subgroup}). For instance, we prove the following, which will be used for studying valuated matroidal representations.  

\begin{nothmb}[Propositions \ref{proposition: tropical index} and \ref{proposition: nonnegative index}]
Let $\mathbb{T}$ be the tropical semifield and $G\subseteq (\T^\times)^n = \mathbb{R}^n$ be a linear subgroup with a finite defining system of linear equations.  Then, there is some equivalence relation $\sim$ on $\{1,\ldots,n\}$ such that
\[
G = \{x\in\mathbb{R}^n\mid x_i = x_j \;\mathrm{if}\; i\sim j\}.
\]   
A similar result holds for the semifield $\mathbb{R}_{\geq 0}$ and a linear subgroup $G \subseteq (\mathbb{R}_{\geq 0}^\times)^n$.
\end{nothmb}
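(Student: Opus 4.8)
The plan for the semifield $\T$ is to first reduce to pure linear algebra and then induct on $n$. Every tropical linear form $\bigoplus_i a_iX_i=\max_i(a_i+X_i)$ is homogeneous of degree one, so the zero locus of any equation $\ell_1=\ell_2$ is invariant under $x\mapsto x+t(1,\dots,1)$; hence so is $G$, and since $0\in G$ we get $\R\cdot(1,\dots,1)\subseteq G$. Moreover the zero locus of a single tropical linear equation is closed and polyhedral — on each cone of the common refinement of the normal fans of $\ell_1$ and $\ell_2$ both forms are linear, so the locus meets that cone in a polyhedron — hence $G$, a finite intersection of such loci, is a closed polyhedral subgroup of $(\R^n,+)$. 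A closed subgroup of $\R^n$ has the form $V\oplus\Lambda$ with $V$ a linear subspace and $\Lambda$ discrete, while a finite union of polyhedra has only finitely many connected components; together these force $\Lambda=0$. So $G$ is a linear subspace of $\R^n$ containing $(1,\dots,1)$.

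Then I induct on $n$. If every defining equation holds identically on $\R^n$, then $G=\R^n$ and we take $\sim$ trivial. Otherwise pick an equation $E : \ell_1=\ell_2$ that cuts something out; the goal is to extract from it a pair $p\neq q$ with $G\subseteq\{x_p=x_q\}$. Localize $E$ at the origin (where it vanishes): retaining only the top-coefficient terms of each side replaces $E$ by the condition $\max_{i\in T_1}x_i=\max_{j\in T_2}x_j$ on two index sets $T_1,T_2$ with $|T_1\cup T_2|\geq 2$, and since $G$ is a linear subspace and this localized locus is a cone on which $G$ agrees with it near the origin, $G$ lies in the localized locus globally. That locus is a finite union of the linear hyperplanes $\{x_i=x_j\}$ through the origin (with the caveat that when one side has a single top index lying in $T_1\cap T_2$ one also gets closed sectors, which however collapse to coordinate equalities after intersecting with $G=-G$), so — because a real vector space is never a finite union of proper subspaces — $G\subseteq\{x_p=x_q\}$ for some $p\neq q$. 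Having found such a pair, identify $\{x_p=x_q\}$ with $\R^{n-1}$ by forgetting $x_q$, let $G'$ be the image of $G$, and substitute $x_q=x_p$ into each defining equation of $G$: this produces a finite system of tropical linear equations over $\R^{n-1}$ whose solution set is exactly $G'$, which is still a subgroup, hence a linear subgroup. By the inductive hypothesis $G'=\{y\in\R^{n-1}\mid y_i=y_j\text{ if }i\sim' j\}$ for some equivalence relation $\sim'$, and pulling back along $x_q=x_p$ gives $G=\{x\in\R^n\mid x_i=x_j\text{ if }i\sim j\}$, where $\sim$ is generated by $\sim'$ and the pair $(p,q)$.

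For the semifield $\R_{\ge0}$ there is a cleaner route. Over $\R_{\ge0}$ a linear equation $\sum_i a_iX_i=\sum_i b_iX_i$ imposes on a point of $(\R_{\ge0}^\times)^n=\R_{>0}^n$ the ordinary linear condition $\sum_i(a_i-b_i)x_i=0$, so the solution set of the system is $L_0\cap\R_{>0}^n$ for a linear subspace $L_0\subseteq\R^n$, and this set is closed both under coordinatewise addition (being cut out of $\R_{>0}^n$ by a subspace) and — by the subgroup hypothesis — under coordinatewise multiplication; it also contains $(1,\dots,1)$. Hence its $\R$-span, which equals $L_0$, is a unital $\R$-subalgebra of the coordinatewise-multiplication ring $\R^n$. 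By the classification of (automatically reduced) $\R$-subalgebras of $\R^n$, $L_0=\{x\in\R^n\mid x_i=x_j\text{ if }i\sim j\}$ for some equivalence relation $\sim$ on $\{1,\dots,n\}$, and therefore $G=L_0\cap\R_{>0}^n=\{x\in(\R_{\ge0}^\times)^n\mid x_i=x_j\text{ if }i\sim j\}$.

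The main obstacle is the geometric step in the $\T$ argument that confines $G$ to a single coordinate hyperplane $\{x_p=x_q\}$: this is immediate when $E$ is a bend relation, but for a general two-sided equation whose two top-coefficient index sets overlap the localized locus is a union of sectors rather than of coordinate hyperplanes, and one must invoke the standing hypothesis that the whole solution set is a subgroup (and that $G=-G$) to rule such configurations out — in practice by showing that such an equation is either redundant on $G$ or, combined with the other equations, still forces coordinate collapses. Everything else — homogeneity, polyhedrality, the structure of closed subgroups of $\R^n$, the invariance of the linear-subgroup property under the substitution $x_q=x_p$, and the classification of subalgebras of $\R^n$ — is routine.
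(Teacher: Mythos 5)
Your overall strategy differs from the paper's: you first argue abstractly that $G$ is a closed, polyhedral subgroup of $(\mathbb{R}^n,+)$ and hence a linear subspace, and then try to extract coordinate equalities one at a time; the paper instead works directly with the subgroup structure via Lemma~\ref{lemma: main lemma}, which analyzes how the existence of a $v\in G$ with a strict unique maximum coordinate constrains the coefficients $a_k, b_k$ in every defining equation, and uses Lemma~\ref{lemma: index lemma} in place of your ``a vector space is never a finite union of proper subspaces.'' Your $\mathbb{R}_{\geq 0}$ argument, recognizing $\mathrm{span}_{\mathbb{R}}(G)$ as a unital reduced commutative $\mathbb{R}$-subalgebra of $\mathbb{R}^n$ and invoking the classification of such subalgebras, is correct, clean, and genuinely different from (and arguably slicker than) the paper's Vandermonde argument.

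However, there is a real gap in the $\mathbb{T}$ case, which you already flag as ``the main obstacle'' but do not close. The claim that after intersecting the locus of $E_{\mathrm{loc}}$ with its negation one obtains a union of coordinate hyperplanes $\{x_p=x_q\}$ is false in general. Take $E_{\mathrm{loc}}: \max(x_1,x_2,x_3)=\max(x_1,x_2,x_4)$ on $\mathbb{R}^4$ (so $T_1=\{1,2,3\}$, $T_2=\{1,2,4\}$). The point $v=(5,-5,3,-3)$ satisfies both $E_{\mathrm{loc}}$ and $E_{\mathrm{loc}}$ at $-v$ (both sides equal $5$ and $-5$ respectively), yet no two coordinates of $v$ agree; so $L^+\cap L^-$ is not contained in $\bigcup_{p\neq q}\{x_p=x_q\}$, and the ``vector space is not a finite union of proper subspaces'' lemma cannot be invoked directly. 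What actually blocks a linear subgroup $G$ from containing such a $v$ is more global: e.g.\ $(5,-5,3,-3)$ and $(5,-5,-3,3)$ both lie in $L^+\cap L^-$, but their difference $(0,0,6,-6)$ does not, so no linear $G$ through $v$ can be closed under the group operation and still lie in $L^+\cap L^-$ --- but this requires an argument involving the full subgroup structure and the original (un-localized) equations, not just $E_{\mathrm{loc}}$ and $-E_{\mathrm{loc}}$. This is precisely the work done in the paper by Parts (1) and (2) of Lemma~\ref{lemma: main lemma}, which extract, from a vector with a unique largest coordinate $v_k$, the conclusion that $a_k=b_k$ in every defining equation and then that $te_k\in G$ for all $t$, before concluding $G=\mathbb{R}^n$ in Part (3). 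Your sentence ``one must invoke the standing hypothesis that the whole solution set is a subgroup\ldots in practice by showing that such an equation is either redundant on $G$ or\ldots still forces coordinate collapses'' is an IOU for exactly this step, and as written the proposal does not discharge it.
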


We call a subspace $V\subseteq \mathbb{R}^n$ \emph{partition subspace} if there exists an equivalence relation $\sim$ on $\{1,2,\dots,n\}$ such that $V=\{x \in \mathbb{R}^n \mid x_i=x_j \textrm{ whenever } i \sim j\}$ (Definition \ref{definition: partition space}). 

Next, we prove one of our key results. Recall that we have the following split short exact sequence of groups (Proposition \ref{proposition: exact sequence R, GL, S_n}): 
\begin{equation}\label{eq: eq seq in intro}
\begin{tikzcd}
1 \arrow[r] & (K^\times)^n \arrow[r,"f"]&
	\text{GL}_n(K)=\text{Aut}(K^n) \arrow[r,"\pi"]
& S_n\arrow[r] & 1
\end{tikzcd}
\end{equation}
where $f$ is the diagonal map and $\pi$ sends a matrix $A$ to the unique permutation $\sigma$ such that $A_{\sigma(i)i}\neq 0$ for all $i$. For any $K$-linear space $V$ in the free module $K^n$, we define $\text{Aut}(V)$ to be the automorphisms $\phi$ of $K^n$ such that $\phi$ and $\phi^{-1}$ preserve $V$. Then $\text{Aut}(V)$ is a linear subgroup of $\text{GL}_n(K)$ (Lemma \ref{lemma: Automorphism groups of tropical spaces are linear subgroups}). In particular the exact sequence \eqref{eq: eq seq in intro} induces a map $\psi:\text{Aut}(V) \to S_n$ as a restriction of $\pi$. 

It is clear that understanding tropical subrepresentations is closely related to understanding the structure of $\text{Aut}(V)$ when $V$ is a tropical linear space. We will further make use of a notion of weak basis lines (Definition \ref{definition: weak base lines}), which can be intuitively considered as ``coordinates'' of weakly free modules.  

With this motivation, suppose $M$ and $\psi$ are as in one of the following cases. 
\begin{enumerate}[label=(\alph*)]
    \item
    Let $M$ be a weakly free module of rank $n$ over $\T$.  Suppose $M$ is finitely presented and that $M$ can be embedded into a finitely generated free $\T$-module.  Let $\psi: \mathrm{Aut}(M)\rightarrow S_n$ be the map describing the action of automorphisms on weak basis lines.
    \item 
    Let $M$ be a weakly free module of rank $n$ over $\mathbb{R}_{\geq 0}$.  Let $\psi: \mathrm{Aut}(M)\rightarrow S_n$ be the map describing the action of automorphisms on weak basis lines.
    \item 
    Let $M\subseteq \T^n$ be a $\T$-linear space.  Suppose $M$ is finitely generated and can be written as an intersection of only finitely many $\T$-hyperplanes.  Let $\psi: \mathrm{Aut}(M)\rightarrow S_n$ be the composition of the inclusion into $\text{GL}_n(\T)$ with the map $\text{GL}_n(\T)\rightarrow S_n$.
\end{enumerate}

\begin{nothmc}
[Theorem \ref{theorem: automorphism group as a semidirect product}]
With the notation as above, we have the following. 
\begin{enumerate}
    \item 
In each of the above cases, we let $H$ be the image of $\psi$.  Then $\mathrm{Aut}(M) \cong H\ltimes V$ for some partition subspace $V\subseteq \mathbb{R}^n$. Moreover, the action of $H$ on $V$ is the permutation action induced by the action of $S_n$ on $\mathbb{R}^n$.\footnote{Note that part of the claim is that $V\subseteq \mathbb{R}^n$ is closed under the permutation action of $H$ on $\mathbb{R}^n$.}  
\item
Furthermore, if $G$ is a finite group, then composition with $\pi: \mathrm{Aut}(M)\rightarrow H$ (where $\pi$ is the corestriction of $\psi$) yields a one-to-one correspondence between equivalence classes of linear $G$-actions on $M$ and homomorphisms $G\rightarrow H$.
\end{enumerate}    
\end{nothmc}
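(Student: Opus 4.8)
The plan is to deduce all three cases from the split exact sequence \eqref{eq: eq seq in intro} for $\text{GL}_n(K)$, together with Theorem B and the vanishing of the cohomology of a finite group with coefficients in a $\mathbb{Q}$-vector space. First I would, in each case, realize $\mathrm{Aut}(M)$ as a linear subgroup of $\text{GL}_n(K)$ for the relevant $K$, with the given map $\psi$ identified as the restriction of $\pi$ from \eqref{eq: eq seq in intro}: case (c) is Lemma \ref{lemma: Automorphism groups of tropical spaces are linear subgroups} and the definition of $\psi$, while in cases (a) and (b) I would use the weak basis lines $L_1,\dots,L_n$ of Definition \ref{definition: weak base lines} -- every $\phi\in\mathrm{Aut}(M)$ permutes them and restricts on each to an isomorphism $L_i\xrightarrow{\ \sim\ }L_{\psi(\phi)(i)}$, i.e.\ multiplication by a unit, so $\phi\mapsto(\psi(\phi),(u_i)_i)$ embeds $\mathrm{Aut}(M)$ into $S_n\ltimes(K^\times)^n=\text{GL}_n(K)$ (injectively, since a minimal generating set taken one per line generates $M$), and under this embedding $\pi$ restricts to $\psi$. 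In all cases set $V:=\ker\psi=\mathrm{Aut}(M)\cap(K^\times)^n$ and $H:=\psi(\mathrm{Aut}(M))$, so there is a short exact sequence $1\to V\to\mathrm{Aut}(M)\xrightarrow{\psi}H\to 1$ with $H\le S_n$ automatically finite.

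Next I would show $V$ is a partition subspace and compute the induced $H$-action. As a linear subgroup of $(K^\times)^n=\mathbb{R}^n$, the group $V$ is the intersection of the solution sets of its defining linear equations; each single equation is a finite system, so Theorem B (Propositions \ref{proposition: tropical index} and \ref{proposition: nonnegative index}) makes each such solution set a partition subspace, and an arbitrary intersection of partition subspaces is again one (it corresponds to the generated equivalence relation). Hence $V=\{x\in\mathbb{R}^n\mid x_i=x_j\text{ if }i\sim j\}$ for some equivalence relation $\sim$. For the action, given $A\in\mathrm{Aut}(M)$ with $\psi(A)=\sigma$, write $A=P_\sigma D$ with $D$ diagonal using \eqref{eq: GL}; then for a diagonal $v\in V$ we have $AvA^{-1}=P_\sigma\,DvD^{-1}\,P_\sigma^{-1}=P_\sigma vP_\sigma^{-1}$ since diagonal matrices commute, which is exactly the coordinate-permutation action of $\sigma$ on $\mathbb{R}^n$. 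Since $V$ is normal in $\mathrm{Aut}(M)$, this also forces $P_\sigma VP_\sigma^{-1}=V$, i.e.\ $\sim$ is $H$-invariant and $V\subseteq\mathbb{R}^n$ is closed under the permutation action of $H$; so $V$ is the corresponding permutation $\mathbb{Z}[H]$-module.

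The splitting is then formal: $V$ is a partition subspace, hence a $\mathbb{Q}$-vector space and in particular uniquely divisible, while $H$ is finite, so $H^i(H;V)=0$ for all $i\ge 1$ (multiplication by $|H|$ both annihilates $H^i(H;V)$ and acts invertibly on it). Vanishing of $H^2(H;V)$ yields a group-theoretic section $H\hookrightarrow\mathrm{Aut}(M)$, so $\mathrm{Aut}(M)\cong H\ltimes V$ with $H$ acting by the permutation action just computed; this is part (1). For part (2), fix such a splitting, regard a linear $G$-action on $M$ as a homomorphism $\rho:G\to\mathrm{Aut}(M)=H\ltimes V$, and note that $\pi$ becomes the projection onto $H$. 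Any $f\in\mathrm{Hom}(G,H)$ lifts to such a $\rho$ by composing with $H\hookrightarrow\mathrm{Aut}(M)$, and any two lifts of a fixed $f$ differ by a $1$-cocycle for the $G$-module $V$; since $G$ is finite and $V$ is a $\mathbb{Q}$-vector space, $H^1(G;V)=0$, so all lifts of $f$ are conjugate by an element of $V$, which matches the equivalence relation on linear $G$-actions. Hence $\rho\mapsto\pi\circ\rho$ is well defined on equivalence classes and is a bijection onto $\mathrm{Hom}(G,H)$, with inverse $f\mapsto$ (the action obtained through $H\hookrightarrow\mathrm{Aut}(M)$).

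The cohomological steps are routine; the main obstacle is the uniform treatment of the first two steps across the three hypotheses. Concretely, in cases (a) and (b) one must verify that each minimal generating set of $M$ meets every weak basis line exactly once -- so that automorphisms really are ``monomial'', the embedding $\mathrm{Aut}(M)\hookrightarrow\text{GL}_n(K)$ is legitimate, and $\psi=\pi|_{\mathrm{Aut}(M)}$ -- and that the resulting defining relations for $\mathrm{Aut}(M)$ are linear; one must also keep track of exactly where each finiteness assumption enters so that Theorem B applies as stated. Once $\mathrm{Aut}(M)$ is known to be a linear subgroup of $\text{GL}_n(K)$ with $\psi$ the restriction of $\pi$, the rest of the argument is identical in all three cases.
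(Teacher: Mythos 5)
Your proposal follows the same architecture as the paper's proof: identify $\mathrm{Aut}(M)$ as a linear subgroup of $\mathrm{GL}_n(K)$ with $\psi = \pi|_{\mathrm{Aut}(M)}$; take $V = \mathrm{Aut}(M) \cap (K^\times)^n$ and $H = \psi(\mathrm{Aut}(M))$ to get the exact sequence $1 \to V \to \mathrm{Aut}(M) \to H \to 1$ (the paper cites Lemma \ref{lemma: Decomposition of quasi-free automorphisms}); show $V$ is a partition subspace via Propositions \ref{proposition: tropical index} and \ref{proposition: nonnegative index} together with Lemmas \ref{lemma: Automorphism groups of weakly free modules are linear subgroups} and \ref{lemma: Automorphism groups of tropical spaces are linear subgroups}; split the extension with $H^2(H,V)=0$ since $H$ is finite and $V$ is uniquely divisible (Propositions \ref{proposition: cohomology of representation} and \ref{proposition: H^2 and extensions}); and for part (2) classify lifts of a fixed $G\to H$ modulo $V$-conjugacy via $H^1(G,V)=0$ (Lemma \ref{lemma: cohomological classification of homomorphisms}). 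Your explicit conjugation computation $AvA^{-1} = P_\sigma v P_\sigma^{-1}$ is a nice way to nail down the permutation action, which the paper states but does not spell out.

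There is one genuine gap. You argue that $V$ is a partition subspace by writing it as an intersection over the individual defining equations and applying Theorem B to each one separately, on the grounds that ``each single equation is a finite system.'' But Propositions \ref{proposition: tropical index} and \ref{proposition: nonnegative index} are stated only for \emph{linear subgroups}, that is, for sets that are already subgroups of $(K^\times)^n$, and the set $\{x \in \mathbb{R}^n : x \text{ and } -x \text{ satisfy one fixed tropical linear equation}\}$ need not be a subgroup. For example, with $n=4$ and the equation $\max(x_1,x_2) = \max(x_3,x_4)$, both $x=(1,0,1,0)$ and $y=(0,1,1,0)$ lie in this set (each of $\pm x, \pm y$ satisfies it), yet $x+y=(1,1,2,0)$ gives $\max(1,1)=1 \neq 2 = \max(2,0)$, so the set is not closed under addition, and the decisive steps in Lemma \ref{lemma: main lemma} (which use integer multiples $Mv$) are unavailable. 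The paper avoids this by not decomposing at all: the finiteness hypotheses in cases (a) and (c) --- finitely presented and embeddable in a finitely generated free module, respectively finitely generated and a finite intersection of $\T$-hyperplanes --- are exactly what Lemmas \ref{lemma: Automorphism groups of weakly free modules are linear subgroups} and \ref{lemma: Automorphism groups of tropical spaces are linear subgroups} use to produce a \emph{finite} defining system for $\mathrm{Aut}(M)$, whence $V$ is itself a linear subgroup of $(K^\times)^n$ with a finite defining system and Proposition \ref{proposition: tropical index} applies directly (in case (b), Proposition \ref{proposition: nonnegative index} needs no finiteness). You gesture at tracking these finiteness assumptions in your final paragraph; replacing the single-equation detour with a direct appeal to them closes the gap, and the rest of the argument is sound.
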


In fact, a similar idea can be applied to a polyhedral cone (viewed as $\mathbb{R}_{\geq 0}$-module). If $M$ is a polyhedral cone and $W$ is the real vector space spanned by $M$, we call a permutation $\sigma$ of the extreme rays of $M$ \emph{realizable} if there is some invertible map $T:W \to W$ such that $T$ maps the $i$th ray to the $\sigma(i)$th ray for all $i$. 

We prove the following. We note that Part (1) is known \cite{horne1978automorphism}, while, to the best of our knowledge, Part (2) is new. Nonetheless, we present them together for clarity and coherence.

\begin{nothmd}[Corollary \ref{corollary: cone case}]
With the same notation as above, let $H$ be the group of realizable permutations of the extreme rays of $M$.  Then the following hold.
\begin{enumerate}
    \item 
    There is a partition subspace $V\subseteq \mathbb{R}^n$ such that the group of invertible linear maps $T: W \rightarrow W$ which preserve $M$ (meaning $T(M) = M$) is isomorphic to $H \ltimes V$ where $H$ acts on $V$ via the permutation action.
    \item 
  There is a one-to-one correspondence between equivalence classes of linear $G$-actions on $W$ which preserve $M$ and homomorphisms $G\rightarrow H$.
\end{enumerate}
\end{nothmd}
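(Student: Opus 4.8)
The plan is to reduce everything to Theorem~\ref{theorem: automorphism group as a semidirect product}(b), the $\mathbb{R}_{\geq 0}$-case of the semidirect product decomposition for weakly free modules. The two things one has to check are: (i) a polyhedral cone $M$, regarded as an $\mathbb{R}_{\geq 0}$-module, is weakly free of rank $n$ with weak basis lines (in the sense of Definition~\ref{definition: weak base lines}) exactly its $n$ extreme rays; and (ii) the group of invertible linear maps $T\colon W\to W$ with $T(M)=M$ is canonically isomorphic to the module automorphism group $\mathrm{Aut}(M)$, under an isomorphism identifying the group $H$ of realizable permutations with the image of the map $\psi$ of Theorem~\ref{theorem: automorphism group as a semidirect product}. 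Throughout I assume $M$ is pointed; this is forced by the hypotheses, since a cone with nonzero lineality space is not generated by its extreme rays and the statement genuinely fails for it (e.g. $M$ a line, where the left-hand group is $\mathbb{R}^\times$ but $H$ and $V$ are trivial).

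For step (i), recall that a polyhedral cone is finitely generated as an $\mathbb{R}_{\geq 0}$-module, and that a pointed one is generated by its extreme rays. I would show that a subset $S\subseteq M$ is a minimal generating set precisely when $S$ consists of one nonzero vector from each of the $n$ extreme rays. The key point is the extremal characterization: a nonzero $x$ lies on the extreme ray $\rho$ if and only if every decomposition $x=a+b$ with $a,b\in M$ has $a,b\in\rho$; applied repeatedly, this forces any expression $x=\sum_j a_j x_j$ of such an $x$ in terms of a generating set $\{x_j\}$ to be supported on the single $x_j$ lying on $\rho$, with that coefficient uniquely determined. Hence every minimal generating set has exactly $n$ elements, and for two minimal generating sets $\{x_j\}$, $\{y_i\}$ there is a unique transition matrix $A$ (with $y_i=\sum_j A_{ij}x_j$), namely the monomial matrix with $A_{i\sigma(i)}>0$ where $\sigma$ is the permutation comparing the two enumerations of the extreme rays. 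So $M$ is weakly free of rank $n$, with weak basis lines the extreme rays.

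For step (ii), since $W=M-M$, any $\mathbb{R}_{\geq 0}$-module automorphism $\phi$ of $M$ extends uniquely to $\tilde\phi\in\mathrm{GL}(W)$ by $\tilde\phi(m_1-m_2)=\phi(m_1)-\phi(m_2)$ (well-definedness, linearity, and $\widetilde{\phi^{-1}}=\tilde\phi^{-1}$ are routine), and $\tilde\phi(M)=M$; conversely restriction to $M$ sends any $T\in\mathrm{GL}(W)$ with $T(M)=M$ into $\mathrm{Aut}(M)$, and the two constructions are mutually inverse. A module automorphism maps extreme rays to extreme rays (apply $\phi^{-1}$ to a decomposition of $\phi(v)$), so it induces a permutation of the extreme rays, which by step (i) is exactly the permutation recorded by $\psi$. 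Finally, a permutation $\sigma$ of the extreme rays is realizable iff it is induced by some automorphism of $M$: if $T\in\mathrm{GL}(W)$ carries $\rho_i$ to $\rho_{\sigma(i)}$, then $T(M)=\mathrm{cone}(\rho_{\sigma(1)},\dots,\rho_{\sigma(n)})=M$ by pointedness, so $T|_M\in\mathrm{Aut}(M)$ realizes $\sigma$; the converse uses the extension $\tilde\phi$. Thus $H=\psi(\mathrm{Aut}(M))$. Now Theorem~\ref{theorem: automorphism group as a semidirect product}(1) gives $\{T\in\mathrm{GL}(W):T(M)=M\}\cong\mathrm{Aut}(M)\cong H\ltimes V$ for a partition subspace $V\subseteq\mathbb{R}^n$ with $H$ acting by permutations, which is Part~(1) (recovering \cite{horne1978automorphism}); and since a linear $G$-action on $W$ preserving $M$ is the same datum as a homomorphism $G\to\mathrm{Aut}(M)$, with equivalence corresponding to conjugacy in $\mathrm{Aut}(M)$, Theorem~\ref{theorem: automorphism group as a semidirect product}(2) yields the bijection with $\mathrm{Hom}(G,H)$ of Part~(2).

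I expect the only real work to be step (i): establishing weak freeness of a polyhedral cone over $\mathbb{R}_{\geq 0}$ and pinning down its weak basis lines, which rests on the extremal-ray analysis above and on correctly excluding the non-pointed case. Steps (ii) and the final assembly are formal once the module-theoretic picture is set up, and everything substantive about the semidirect structure has already been done in Theorem~\ref{theorem: automorphism group as a semidirect product}.
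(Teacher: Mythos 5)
Your proof takes essentially the same route as the paper's: reduce to Theorem~\ref{theorem: automorphism group as a semidirect product}(b) by (i) establishing that the cone is weakly free of rank $n$ with the extreme rays as weak basis lines, (ii) identifying $\mathrm{Aut}(M)$ as an $\mathbb{R}_{\geq 0}$-module with the group of invertible $T\colon W\to W$ satisfying $T(M)=M$, and (iii) checking that the realizable permutations are exactly the image of $\psi$. Your direct extremal-ray argument for (i) is equivalent to the paper's passage through quasi-freeness (the corollary following Lemma~\ref{lemma: conic Krein-Milman} together with Lemma~\ref{lemma: quasi-free implies weakly free}), and (ii)--(iii) fill in details the paper only asserts.

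Two points deserve attention. You never produce an embedding of $M$ into a finitely generated free $\mathbb{R}_{\geq 0}$-module; the paper does this explicitly, by pairing with the extreme rays of the dual cone (which span the dual space of $W$ because $M$ is pointed). The statement of case~(b) in Theorem~\ref{theorem: automorphism group as a semidirect product} omits this hypothesis, but its proof rests on Lemma~\ref{lemma: Automorphism groups of weakly free modules are linear subgroups}, which does require $M$ to sit inside a free module, so the embedding should be supplied. Also, the phrase ``equivalence corresponding to conjugacy in $\mathrm{Aut}(M)$'' is too loose for part~(2): Theorem~\ref{theorem: automorphism group as a semidirect product}(2) classifies $G$-actions up to conjugacy by elements of $(K^\times)^n\cap\mathrm{Aut}(M)$ only (Definition~\ref{definition: equivalent reps}), i.e.\ by automorphisms diagonal with respect to the weak basis. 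One should check, as the paper does, that this matches the cone-side equivalence defined just before the corollary (a $G$-intertwiner for which each extreme ray is an eigenspace); conjugacy by all of $\mathrm{Aut}(M)$ would instead give $\mathrm{Hom}(G,H)$ modulo inner automorphisms of $H$ rather than $\mathrm{Hom}(G,H)$ itself.
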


Our last theorem is the reinterpretation of tropical subrepresentations as valuated matroidal representations, which generalizes \cite[Proposition 3.1.1]{giansiracusa2020matroidal}. We also consider an alternative representation over $\T$ associated to a valuated matroidal representation in terms of a quasi-free module $Q_M$ associated to a valuated matroid $M$. 

In the following theorem, we highlight that the module $Q_M$ associated to a valuated matroid $M$ does not depend on a choice of an embedding of a tropical linear space in $\mathbb{T}^n$, which was not the case in Giansiracusa and Manaker's work. More precisely, \cite[Remark 3.3.5.]{giansiracusa2020matroidal}  states that tropical representations are seen as invariant tropical linear spaces sitting inside the ambient free module. In particular, the module $Q_M$ may allow one to study the intrinsic module-theoretic structure of representations. 

To introduce our last result we will need the notion of weak automorphism. For a valuated matroid $(M,w)$ on $[n]=\{1,\dots,n\}$, by a weak automorphism of $M$, we mean an element $\sigma \in S_n$ such that there exists a map $\tau:[n] \to \mathbb{T}$ such that for each basis $B$ of the underlying matroid $\underline{M}$, one has
\[
w(\sigma(B))=(\prod_{i \in B}\tau(i))w(B). 
\]

We remark that Jarra, Lorscheid, and Vital consider a more general case of morphisms for matroids over idylls in \cite[Definition 2.2]{jarra2024quiver} in terms of submonomial matrices. The recent paper \cite{iezzi2023tropical} of Iezzi and Schleis on tropical quiver Grassmannians contains related ideas. 

Now let $M$ be a valuated matroid and $V_M$ be a corresponding tropical linear space. The module $Q_M$ is the coordinate ring of the tropical linear space $V_M$, which is a quasi-free module (Lemma \ref{lemma: QM quasi-free lemma}). This allows us to identify $\text{Aut}(Q_M)$, the automorphism group of $Q_M$ as a $\mathbb{T}$-module, with a subgroup of $\text{GL}_n(\mathbb{T})$. With this, we prove the following.

\begin{nothme}[Corollary \ref{corollary: main cor}]
Let $M$ be a valuated matroid and $G$ be a finite group.
\begin{enumerate}
    \item 
The isomorphism classes of tropical subrepresentations whose underlying tropical linear space is isomorphic to $V_M$ are in one-to-one correspondence with weak isomorphism classes of weak $G$-actions on $M$. 
\item 
The image of $\mathrm{Aut}(Q_M)$ under the map $\pi:\text{GL}_n(\T)\rightarrow S_n$ is the weak automorphism group $\mathrm{Aut}_w(M)$.
\end{enumerate}
\end{nothme}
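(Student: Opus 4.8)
The plan is to obtain both parts from Theorem \ref{theorem: automorphism group as a semidirect product}, once $Q_M$ is placed in its scope and the group $H$ appearing there is identified with $\mathrm{Aut}_w(M)$. By Lemma \ref{lemma: QM quasi-free lemma}, $Q_M$ is quasi-free of rank $n$ over $\T$, hence weakly free, and letting automorphisms act on its weak basis lines identifies $\mathrm{Aut}(Q_M)$ with a subgroup of $\mathrm{GL}_n(\T)$; since $Q_M$ is the coordinate ring of $V_M\subseteq\T^n$, this subgroup is, up to replacing each permutation by its inverse (which does not affect the image in $S_n$), equal to $\mathrm{Aut}(V_M)=\{\phi\in\mathrm{GL}_n(\T)\mid\phi(V_M)=V_M\}$. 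A valuated matroid has only finitely many circuits, so $V_M$ is a finitely generated $\T$-linear space obtained as an intersection of finitely many $\T$-hyperplanes; thus case (c) of Theorem \ref{theorem: automorphism group as a semidirect product} applies to $V_M$. Transporting along $\mathrm{Aut}(Q_M)\cong\mathrm{Aut}(V_M)$, we get $\mathrm{Aut}(Q_M)\cong H\ltimes V$ with $V\subseteq\R^n$ a partition subspace, $H=\pi(\mathrm{Aut}(Q_M))$, and $H$ acting by permutations; and, $G$ being finite, Theorem \ref{theorem: automorphism group as a semidirect product}(2) yields a bijection between equivalence classes of linear $G$-actions on $Q_M$ and homomorphisms $G\to H$.

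To prove part (2) I would compute $H=\mathrm{Aut}_w(M)$ directly. By \eqref{eq: GL}, an element of $\mathrm{Aut}(Q_M)\subseteq\mathrm{GL}_n(\T)$ is a monomial matrix $P_\sigma D_\tau$, with $P_\sigma$ a permutation matrix, $D_\tau=\mathrm{diag}(\tau_1,\dots,\tau_n)$, $\tau_i\in\T^\times=\R$, and $\pi(P_\sigma D_\tau)=\sigma$. Such a matrix descends to an automorphism of $Q_M$ precisely when $\phi=P_\sigma D_\tau$ preserves $V_M$; recalling that a valuated matroid is recovered, up to the scaling $w\sim\lambda w$, from its tropical linear space \cite{frenk2013tropical}, this happens exactly when the induced transformation of the tropical Pl\"ucker vector, $w\mapsto w'$ with $w'(B)=\bigl(\prod_{i\in\sigma^{-1}B}\tau_i\bigr)\,w(\sigma^{-1}B)$, satisfies $w'=\lambda w$ for some $\lambda\in\R$. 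Re-indexing $B\mapsto\sigma B$ and absorbing $\lambda$ into a uniform shift of $\tau$ — legitimate because every basis of $\underline{M}$ has the same cardinality $d$ — this condition becomes exactly $w(\sigma B)=\bigl(\prod_{i\in B}\tau'(i)\bigr)\,w(B)$ for all bases $B$, i.e.\ that $\sigma$ is a weak automorphism of $M$ with witness $\tau'$; and conversely every weak automorphism together with a chosen witness produces such a monomial matrix. Hence $\pi(\mathrm{Aut}(Q_M))=\mathrm{Aut}_w(M)$, which is part (2) and identifies the group $H$ above.

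For part (1) I would push the bijection of Theorem \ref{theorem: automorphism group as a semidirect product}(2) through two translations. First, because $Q_M$ — the coordinate ring of $V_M$ — does not depend on the embedding of $V_M$ into an ambient free module, giving a tropical subrepresentation of $G$ whose underlying tropical linear space is isomorphic to $V_M$ is, up to isomorphism of tropical subrepresentations, the same as giving a linear $G$-action on $Q_M$, up to equivalence: restricting the ambient $G$-action to the invariant subspace $V\cong V_M$ yields a $G$-action on the module $V$, hence on its coordinate ring $\cong Q_M$, while a $G$-action on $Q_M$ is realized through $\mathrm{Aut}(Q_M)\subseteq\mathrm{GL}_n(\T)$ as the tropical subrepresentation $\bigl(G\to\mathrm{GL}_n(\T),\;V_M\subseteq\T^n\bigr)$, and these constructions are inverse on isomorphism classes. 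Second, a weak $G$-action on $M$ is the combinatorial shadow of a linear $G$-action on $Q_M$ — its underlying permutations $\sigma_g$ together with witnesses $\tau_g$ — and weak isomorphism matches the equivalence of Theorem \ref{theorem: automorphism group as a semidirect product}(2); granting this bookkeeping — whose only substantive ingredient is that, $G$ being finite and $V$ a $\Q$-vector space, $\HH^1(G,V)=0$, so each homomorphism $G\to H$ lifts uniquely up to equivalence to $\mathrm{Aut}(Q_M)=H\ltimes V$ — weak isomorphism classes of weak $G$-actions on $M$ correspond to homomorphisms $G\to\mathrm{Aut}_w(M)$. Chaining these two translations with Theorem \ref{theorem: automorphism group as a semidirect product}(2) and the identification $H=\mathrm{Aut}_w(M)$ from part (2) gives part (1).

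The main obstacle is part (2): one has to handle carefully the interaction between a monomial change of coordinates on $\T^n$ and the induced action on tropical Pl\"ucker data, together with the faithfulness of the dictionary between valuated matroids (modulo $w\sim\lambda w$) and tropical linear spaces, while keeping track of the overall scalar $\lambda$. The rest is bookkeeping, the one genuinely nontrivial point being the reconciliation, for finite $G$, of the three notions of sameness — isomorphism of tropical subrepresentations, equivalence of linear $G$-actions on $Q_M$, and weak isomorphism of weak $G$-actions on $M$ — all of which reduce to the vanishing of $\HH^1(G,V)$ for the partition subspace $V$.
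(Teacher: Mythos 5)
Your plan for part (2) leans on an identification of $\mathrm{Aut}(Q_M)$ with $\mathrm{Aut}(V_M)$ (up to transposing each matrix) as subgroups of $\mathrm{GL}_n(\T)$, justified only by ``$Q_M$ is the coordinate ring of $V_M$.'' This is exactly where you need to be careful: the paper records that $Q_M$ is \emph{not} reflexive in general, and the correspondence $f\mapsto f^\vee$ goes one way only. Concretely, if $\phi$ preserves the defining congruence of $Q_M$ then $\phi^T$ preserves $V_M=\Hom(Q_M,\T)$ (this is the duality step the paper does use), but the converse statement — that $\phi^T(V_M)=V_M$ forces $\phi$ to preserve the congruence — is a double-dual assertion that fails without reflexivity. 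The paper therefore does \emph{not} claim $\mathrm{Aut}(Q_M)\cong\mathrm{Aut}(V_M)^T$; it proves $\pi(\mathrm{Aut}(Q_M))\subseteq\mathrm{Aut}_w(M)$ by dualizing into $\mathrm{Aut}(V_M)$ and invoking Theorem \ref{theorem: tropical linear space automorphisms induce weak automorphisms}, and then proves the reverse inclusion by a separate explicit construction: given a weak automorphism $\sigma$ with witness $\tau$, the monomial map $e_i\mapsto\tau_i^{-1}e_{\sigma(i)}$ is checked directly to respect the bend congruence defining $Q_M$ (Proposition \ref{proposition: Q_M case}). Your Pl\"ucker-vector computation is the right calculation, but as written it computes when $\phi$ preserves $V_M$, not when $\phi$ descends to $Q_M$, and the bridge between the two is precisely the step you have not supplied.

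For part (1) there is a more serious counting error. Theorem \ref{theorem: automorphism group as a semidirect product}(2) and $H^1(G,V)=0$ give a bijection between \emph{equivalence classes} (conjugation by diagonal elements of $\mathrm{Aut}$) of $G$-actions and homomorphisms $G\to H$; but the statement you must prove concerns \emph{isomorphism classes} of tropical subrepresentations (conjugation by arbitrary elements of $\mathrm{Aut}(V_M)$) and \emph{weak isomorphism classes} of weak $G$-actions (conjugation by arbitrary elements of $\mathrm{Aut}_w(M)$). Your final sentence, ``weak isomorphism classes of weak $G$-actions on $M$ correspond to homomorphisms $G\to\mathrm{Aut}_w(M)$,'' is false as stated: the left side is the orbit set of the right side under $\mathrm{Aut}_w(M)$-conjugation, not the right side itself. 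The paper's proof of Corollary \ref{corollary: main cor} fills this gap with an explicit orbit argument: it shows that $H$ (viewed inside $\mathrm{Aut}(V_M)$ via a splitting of the semidirect product) acts by conjugation on the set of equivalence classes, that isomorphism classes are precisely the orbits of that action, and that the bijection $\eta$ of Theorem \ref{theorem: automorphism group as a semidirect product}(2) is $H$-equivariant, so that the two orbit sets coincide. Your first ``translation'' (isomorphism of subrepresentations $\leftrightarrow$ equivalence of $G$-actions on $Q_M$) has the same confusion baked in — these are different equivalence relations, differing exactly by this $H$-orbit — so even granting the $Q_M$--$V_M$ identification it would not close the argument.

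To repair your proof you would need, for (2), either to check directly that a monomial map respecting the weak-automorphism Pl\"ucker condition preserves the bend \emph{congruence} (not just the dual subspace), or to route the converse inclusion through $\mathrm{Aut}_w(M)$ as the paper does; and for (1), to insert the $H$-equivariance/orbit step that descends the equivalence-class bijection to an isomorphism-class bijection. With those two additions your approach would essentially coincide with the paper's.
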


To summarize, for a valuated matroid $M$ with the underlying matroid $\underline{M}$, we have the following diagram:
\[
\begin{tikzcd}[column sep=1.5cm, row sep = 1cm]
G \arrow[r,"tropical"] \arrow[dr,swap,"matroidal"]& \text{Aut}(Q_M)\simeq \text{Aut}_w(M) \arrow[d,"inclusion"]\\
 & \text{Aut}(\underline{M})
\end{tikzcd}
\]
We remark that in general $\text{Aut}_w(M)$ is a proper subgroup of $\text{Aut}(\underline{M})$ even when $M$ is realizable (Example \ref{example: last one}).

\bigskip

\textbf{Acknowledgment} J.J. acknowledges AMS-Simons Research Enhancement Grant for Primarily Undergraduate Institution (PUI) Faculty during the writing of this paper, and parts of this research was done during his visit to the Institute for Advanced Study supported by the Bell System Fellowship Fund. K.M. acknowledges the support of the Simons Foundation, Travel Support for Mathematicians.

\section{Preliminaries}\label{section: preliminaries}

A \textit{semiring} is a set $R$ with two binary operations (addition $+$ and multiplication $\cdot$ ) satisfying the same axioms as rings, except the existence of additive inverses. In this paper, a semiring is always assumed to be commutative. A semiring $(R,+,\cdot)$ is \emph{semifield} if $(R\backslash\{0_R\},\cdot)$ is a group. A semiring $R$ is said to be \emph{zero-sum free} if $a+b=0$ implies $a=b=0$ for all $a,b \in R$. A semiring $R$ is said to be \emph{connected} if any pair $(e,f)$ of elements in $R$ satisfies the conditions $ef=0$ and $e+f=1$, then $\{e,f\}=\{0,1\}$. We call an idempotent semifield $K$ \textit{archimedean} if for every $\lambda, x\in K$ with $x\neq 0$ and $\lambda < 1$, there exists a natural number $n$ such that $\lambda^n \leq x$.

\begin{mydef}
For any totally ordered abelian group $\Gamma$, one can adjoin an extra element $\{-\infty\}$ so that $\overline{\Gamma}:=\Gamma \cup \{-\infty\}$ becomes an idempotent semifield with addition $a+b=\max\{a,b\}$ and multiplication $ab=a+_\Gamma b$. When $\Gamma=\mathbb{R}$, we will use the notation $\mathbb{T}$ instead of $\overline{\mathbb{R}}$. It is called the \emph{tropical semifield}. The subsemifield $\mathbb{B}=\{0,-\infty\}$ is called the \emph{Boolean semifield}.\footnote{Sometimes the elements of $\mathbb{B}$ are written as 0 and 1.} One can easily check that any semifield is connected. 
\end{mydef}

A module over a semiring is defined analogously to modules over a ring, but there is a big difference in how they behave. In our previous paper \cite{JMT20} we remark that a free module over the \emph{Boolean semifield} $\mathbb{B}$ has a unique basis. 
In fact, we prove that ``few bases exist'' over idempotent semirings; see \cite[Proposition 3.15]{JMT20} 
for the precise statement. 

\begin{mydef}
Let $R$ be a semiring and $M$ be a $R$-module. We call $M$ \textit{extremal} if $x+y \in M$ implies $x,y \in M$. 
\end{mydef}

\begin{mydef}
Let $R$ be a semiring and $M$ be a $R$-module. By a \textit{join-irreducible} element of $M$, we mean an element $y \in M$ such that $y=\sum x_i$ implies $y=x_i$ for some $i$.\footnote{The term ``join-irreducible'' comes from lattice theory.}  
\end{mydef}

\begin{mydef}
    A \textit{congruence} on a semiring $R$ is an equivalence relation on $R$ that respects the operations.
\end{mydef}

\begin{mydef}\label{definition: bend relation}
	Let $f$ be a polynomial in the polynomial semiring $\T[x_1,\dots, x_n]$. The \emph{bend relations} of $f$ is the set of equivalences $\{ f\sim f_{\hat{i}}\}$, where $f_{\hat{i}}$ is the polynomial $f$ after removing its $i$-th monomial. For a subset $S$  of $\T[x_1,\dots, x_n]$ the \emph{bend congruence} of $S$ is the congruence generated by the bend relations of all $f \in S$.
\end{mydef}

\begin{pro}\cite[Proposition 3.18]{JMT20}\label{proposition: exact sequence R, GL, S_n}
Let $R$ be a connected zero-sum free semiring.\footnote{This condition holds if $R$ is an idempotent semifield or $\mathbb{R}_{\geq 0}$.}. Then one has the following split short exact sequence of groups which is natural in $R$:
\begin{equation}\label{eq: exact sequence R,GL, S_n}
\begin{tikzcd}
1 \arrow[r] & (R^\times)^n \arrow[r,"f"]&
	\emph{GL}_n(R) \arrow[r,"\pi"]
& S_n\arrow[r] & 1
\end{tikzcd}
\end{equation}
where $f$ is the diagonal map and $\pi$ sends a matrix $A$ to the unique permutation $\sigma$ such that $A_{\sigma(i)i}\neq 0$ for all $i$.    
\end{pro}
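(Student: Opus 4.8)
The plan is to reduce the statement to the single structural fact that every invertible matrix over $R$ is \emph{monomial} (a generalized permutation matrix): exactly one nonzero entry in each row and in each column, and all of those entries are units. Once this is established, identifying $\pi$, its kernel, and a splitting is routine bookkeeping, and naturality in $R$ is immediate because a semiring homomorphism sends units to units and $0$ to $0$.

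So fix $A\in\text{GL}_n(R)$ and write $B=A^{-1}$. First I would extract the relevant equations from $AB=BA=I$. For $i\neq j$ the $(i,j)$-entry of $AB$ gives $\sum_k A_{ik}B_{kj}=0$, and since $R$ is zero-sum free every summand vanishes, so $A_{ik}B_{kj}=0$ for all $k$; symmetrically $B_{ik}A_{kj}=0$ whenever $i\neq j$. The diagonal entries give $\sum_k A_{ik}B_{ki}=1$ for every $i$.

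The key step is to pin down, for each row $i$, the unique column carrying its nonzero entry. Fix $i$ and set $e_k:=A_{ik}B_{ki}$. Then $\sum_k e_k=1$, and for $k\neq l$ one has $e_ke_l=A_{ik}\,(B_{ki}A_{il})\,B_{li}=0$, where $B_{ki}A_{il}=0$ is the off-diagonal relation from $BA=I$ (take $m=i$ in $B_{km}A_{ml}=0$). Consequently $e_k=e_k\cdot 1=e_k\sum_l e_l=e_k^2$, so each $e_k$ is idempotent, and $e_k$ together with $f_k:=\sum_{l\neq k}e_l$ forms a complementary pair of idempotents. Connectedness of $R$ forces $e_k\in\{0,1\}$; since the $e_k$ sum to $1$, have pairwise products $0$, and $1\neq 0$, exactly one of them equals $1$, say at index $\tau(i)$. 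Then $A_{i,\tau(i)}B_{\tau(i),i}=1$, so both $A_{i,\tau(i)}$ and $B_{\tau(i),i}$ are units. Next, $\tau$ is injective: if $\tau(i)=\tau(i')=q$ with $i\neq i'$, then $A_{i,q}$ is a unit, whereas the off-diagonal relation $A_{i,q}B_{q,i'}=0$ forces $B_{q,i'}=0$, contradicting $A_{i',q}B_{q,i'}=1$. Hence $\tau\in S_n$. Finally, for $l\neq i$ the relation $A_{i,\tau(i)}B_{\tau(i),l}=0$ and invertibility of $A_{i,\tau(i)}$ give $B_{\tau(i),l}=0$; as $\tau$ is a bijection this shows every row of $B$ is monomial, so $B$ is monomial, and therefore so is $A=B^{-1}$.

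With the monomial structure in hand, $\pi(A):=\sigma$, where $A_{\sigma(i),i}\neq 0$, is well defined; it is a group homomorphism since a product of monomial matrices is monomial with the composite permutation; its kernel is exactly the diagonal matrices with unit entries, i.e.\ the image of $f$; and sending a permutation to its permutation matrix is a section of $\pi$, giving the split short exact sequence. Naturality in $R$ follows because entrywise application of a semiring homomorphism preserves being diagonal (resp.\ a permutation matrix, resp.\ monomial) and commutes with $f$, $\pi$, and the section. The main obstacle is the third paragraph: zero-sum freeness only yields that \emph{products} of entries vanish, and the idempotent-and-connectedness argument is precisely what upgrades this to the statement that each $e_k$, and ultimately each off-diagonal entry of $B$, is genuinely $0$; this is where connectedness (rather than merely a hypothesis like absence of zero divisors) enters in an essential way.
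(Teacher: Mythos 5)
The paper does not prove this proposition; it is cited verbatim from \cite[Proposition 3.18]{JMT20}, so there is no internal proof to compare against. Your argument, however, is complete and correct, and it follows the standard route to this kind of statement: show that every $A\in\mathrm{GL}_n(R)$ is a monomial matrix, after which identifying $\pi$, its kernel, the permutation-matrix section, and naturality is routine.

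The core of your proof is sound. Zero-sum freeness converts the off-diagonal equations $\sum_k A_{ik}B_{kj}=0$ (for $i\neq j$) and $\sum_m B_{km}A_{ml}=0$ (for $k\neq l$) into the termwise vanishings $A_{ik}B_{kj}=0$ and $B_{km}A_{ml}=0$. Then for a fixed row $i$, the elements $e_k:=A_{ik}B_{ki}$ satisfy $\sum_k e_k=1$ and $e_ke_l=A_{ik}(B_{ki}A_{il})B_{li}=0$ for $k\neq l$, so each pair $(e_k,\sum_{l\neq k}e_l)$ satisfies the hypotheses of connectedness, forcing $e_k\in\{0,1\}$; exactly one is $1$ because they sum to $1$ and have pairwise product $0$. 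This pins down the unique unit entry in each row, injectivity of $\tau$ follows from $A_{iq}B_{qi'}=0$, and the remaining off-diagonal entries of $B$ (and hence of $A$) are killed by multiplying by the unit $A_{i,\tau(i)}$. One small remark: you observe the $e_k$ are idempotent, but the paper's definition of connected does not require idempotency of the pair — only $ef=0$ and $e+f=1$ — so that observation, while true, is not needed. You also correctly identify why connectedness (and not merely absence of zero divisors) is essential: in a non-connected zero-sum-free semiring such as $\mathbb{B}\oplus\mathbb{B}$, orthogonal idempotent pairs $(1,0),(0,1)$ give rise to invertible non-monomial matrices, so the conclusion genuinely fails there.
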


In this paper, we will consider three notions of linear spaces:
\[
\{\text{tropical linear spaces}\} \subseteq  \{\text{Bend-$K$-linear spaces}\} \subseteq \{\text{$K$-linear spaces}\}
\]

\begin{mydef}\label{definition: K-linear space}
Let $K$ be a semifield. 
\begin{enumerate}
    \item 
By a \emph{$K$-linear space} $V$ in $K^n$, we mean the subset of $K^n$ obtained in the following way: there exists a congruence relation on $K[x_1,\dots,x_n]$ generated by $\{L_i \sim L_j\}_{i,j \in I}$, where $L_i \in K[x_1,\dots,x_n]$ are linear polynomials, such that 
\[
V=\Hom_K(K[x_1,\dots,x_n]/\sim,K). 
\]
\item 
When $K$ is totally ordered and a congruence relation $\sim$ is a bend relation as in Definition \ref{definition: bend relation}, then $V$ is said to be \emph{Bend-$K$-linear space}.\footnote{In other words, $V$ is the set of points in $K^n$ which ``tropically vanish'' on a set of linear polynomials.} When $K=\mathbb{T}$, this is a tropical prevariety.\footnote{By a tropical prevariety, we mean the set of points that tropically vanish on each of defining equations, but it may not be a tropicalization of an algebraic variety. See \cite[Definition 3.2.1]{maclagan2021introduction} for the case when $K=\mathbb{T}$.}  
\item 
By a \emph{tropical linear space}, we mean a Bend-$K$-linear space whose bend relation comes from circuits of a valuated matroid.\footnote{In much of the tropical literature, one only considers the case when $K=\mathbb{T}$, but in the original paper of Dress and Wenzel \cite{dress1992valuated}, they considered valuated matroids in a more general setting.}
\end{enumerate}
When a Bend-$K$-linear space (resp.~tropical linear space) is defined by the bend relation of a single polynomial, we call $V$ a Bend-$K$-hyperplane (resp.~tropical hyperplane). When a $K$-linear space $L$ is defined by one equation involving two linear expressions, such as $\ell_1 \sim \ell_2$, we call $L$ a $K$-hyperplane.   
\end{mydef}

\begin{rmk}
Since not all $\mathbb{T}$-linear spaces (or even Bend-$\mathbb{T}$-linear spaces) are tropical linear spaces, we use these two terms to distinguish them from tropical linear spaces. We note that Bend-$\T$-hyperplanes are tropical linear spaces as any Bend-$\T$-hyperplane is a tropicalization of a linear space, and hence they are tropical varieties. This is no longer true even when one has only two defining equations. See \cite[Example 3.2.2]{maclagan2021introduction}. In \cite{fink2013tropical}, Fink shows that tropical linear spaces are precisely tropical varieties of degree 1. In particular, Bend-$\T$-hyperplanes are tropical linear spaces. Note that one can also consider that any Bend-$\T$-hyperplane $H$ is the tropicalization of some linear space $L$ over a valued field. Then, the Pl\"ucker coordinate of $L$ in the Grassmannian will impose the tropical linear space structure on $H$. 
\end{rmk}

A tropical linear space is an element in the Dressian, in particular, is a tropical variety. An alternative way to describe the points of a tropical linear space is as the set 
\[
\Hom(\T[x_1, \dots, x_n]/C, \T),
\]
where $C$ is the bend congruence of $S$.

\section{Weakly free and quasi-free modules}\label{section: weakly free and quasi-free section}

In this section we study modules that are not necessarily free but have a finite generating set. Note that even in the case when $R$ is a ring, the existence of a minimal generating set for an $R$-module is not enough to guarantee nice behavior. To motivate the following definition, recall that any two bases of an $R$-module over a ring $R$ are related by an invertible change of basis matrix.

\begin{mydef}\label{definition: weakly free}
We call a finitely generated module $M$ over a semiring $R$ \emph{weakly free} if for any two minimal generating sets $\{x_1,\ldots, x_m\}, \{y_n,\ldots, y_n\}$, we have $m = n$ and that there is a unique invertible matrix $A\in GL_n(R)$ such that $y_i = \sum_j A_{ij} x_j$ for all $i$.  If $M$ is weakly free, then a minimal generating set is called a \emph{weak basis}.
The size of a weak basis is called the rank of $M$.
\end{mydef}

\begin{rmk}
It is convenient for us to include finite generation in the definition of a weakly free module because in this paper we focus on finitely generated modules.  However, for other applications, it is likely more natural to allow infinitely generated modules so that free modules are always weakly free. Generalizing the definition to the infinitely generated case is straightforward.
\end{rmk}

\begin{rmk}\label{rmk: free weakly}
The main case of interest for us is that of semifields.  Over a semifield it is easy to check that free implies weakly free, which justifies the terminology.  But over an arbitrary semiring this is not true because even for a free module, a minimal generating set need not be a basis.  As an example, let $R$ be a semiring in which $1$ is the sum of two non-units (e.g. $R = \mathbb{B} \oplus \mathbb{B}$ or $R = \mathbb{Z}$) which we denote $x, y$.  View $R$ as a module over itself, and observe that $\{x, y\}$ is a minimal generating set which is not a basis.
\end{rmk}

The following result embeds automorphisms of weakly free modules into $\text{GL}_n(R)$.

\begin{lem}\label{lemma: embedding of automorphisms into GL_n}
Let $M$ be a weakly free module over a semiring $R$, and fix a weak basis $x_1, \ldots, x_n$.  Let
\[
\pi: R^n \rightarrow M
\]
be the map sending the standard basis to $x_1,\ldots, x_n$. 
\begin{enumerate}
    \item 
For any $\phi\in \mathrm{Aut}(M)$, there is a unique invertible matrix $A\in \text{GL}_n(R)$ such that
\[
\pi A = \phi\pi.
\]
\item 
Explicitly, the matrix $A$ is the unique invertible matrix such that
\[
\phi(x_i) = \sum_j A_{ji}x_j \quad \forall i.
\]
Moreover, this construction gives a monomorphism $\iota: \mathrm{Aut}(M) \rightarrow GL_n(R)$.
\end{enumerate}

\end{lem}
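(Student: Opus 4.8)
The plan is to unwind the definitions. Since $M$ is weakly free with weak basis $x_1,\dots,x_n$, the map $\pi:R^n\to M$ sending the standard basis $e_i$ to $x_i$ is surjective. Given $\phi\in\mathrm{Aut}(M)$, the set $\{\phi(x_1),\dots,\phi(x_n)\}$ is again a minimal generating set (because $\phi$ is an automorphism, it carries minimal generating sets to minimal generating sets), so by the defining property of weak freeness there is a \emph{unique} invertible matrix $A\in\mathrm{GL}_n(R)$ with $\phi(x_i)=\sum_j A_{ji}x_j$ for all $i$. This immediately gives part (2)'s explicit formula and the uniqueness in part (1); one then checks $\pi A=\phi\pi$ by evaluating both sides on the standard basis vector $e_i$: $\pi(Ae_i)=\pi\!\left(\sum_j A_{ji}e_j\right)=\sum_j A_{ji}x_j=\phi(x_i)=\phi(\pi(e_i))$. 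Conversely any $A'$ with $\pi A'=\phi\pi$ satisfies $\phi(x_i)=\sum_j A'_{ji}x_j$, so $A'=A$ by the uniqueness clause, establishing part (1) fully.

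For the homomorphism claim, define $\iota(\phi)=A$ as above. To see $\iota$ is multiplicative, take $\phi,\phi'\in\mathrm{Aut}(M)$ with associated matrices $A,A'$, so $\pi A=\phi\pi$ and $\pi A'=\phi'\pi$. Then $\pi(A'A)=(\pi A')A=(\phi'\pi)A=\phi'(\pi A)=\phi'\phi\pi$, so by uniqueness $\iota(\phi'\phi)=A'A$; one must be careful here about whether the convention $\phi(x_i)=\sum_j A_{ji}x_j$ makes $\iota$ a homomorphism or an anti-homomorphism, and pick the indexing in the statement (rows versus columns) so that composition matches matrix multiplication in the correct order — this is purely bookkeeping. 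The identity automorphism clearly maps to the identity matrix, so $\iota$ is a group homomorphism.

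Finally, for injectivity: if $\iota(\phi)=A$ is the identity matrix, then $\phi(x_i)=x_i$ for all $i$. Since $\{x_1,\dots,x_n\}$ generates $M$, every element of $M$ is a finite $R$-linear combination $\sum_i c_i x_i$, and $\phi$ being $R$-linear gives $\phi\!\left(\sum_i c_i x_i\right)=\sum_i c_i\phi(x_i)=\sum_i c_i x_i$, so $\phi$ is the identity. Hence $\iota$ is a monomorphism.

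There is no serious obstacle here; the statement is essentially a repackaging of the defining property of weak freeness. The only point requiring care is the index conventions, namely making sure the transpose-or-not choice in the formula $\phi(x_i)=\sum_j A_{ji}x_j$ is consistent with the asserted relation $\pi A=\phi\pi$ and with $\iota$ being a homomorphism rather than an anti-homomorphism; I would fix this at the outset and then all three parts follow by direct evaluation on the weak basis.
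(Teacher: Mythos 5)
Your proof is correct and takes essentially the same route as the paper: observe that $\phi$ carries the weak basis to a minimal generating set, invoke weak freeness to get a unique invertible $A$ with $\phi(x_i)=\sum_j A_{ji}x_j$, verify $\pi A=\phi\pi$ by evaluating on the $e_i$, and then deduce the homomorphism property from $\pi(A'A)=\phi'\phi\pi$ and injectivity from surjectivity of $\pi$. Your worry about homomorphism versus anti-homomorphism is settled by your own computation: the stated index convention gives $\iota(\phi'\circ\phi)=A'A=\iota(\phi')\iota(\phi)$, which is exactly the homomorphism property (the $A_{ji}$ transposition in the formula is what makes the composition order come out right), so no bookkeeping adjustment is needed.
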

\begin{proof}First observe that since $\phi$ is an isomorphism, $\phi(x_1),\ldots,\phi(x_n)$ form a minimal generating set for $M$.  By the definition of a weakly free module, there is a unique $A\in GL_n(R)$ such that $\phi(x_i) = \sum_j A_{ji}x_j$ for all $i$. 

Next we observe that
\begin{equation}
\pi (A e_k) = \pi(\sum_i A_{ik} e_i) = \sum_i A_{ik} x_i = \phi(x_k) = \phi\pi(e_k).
\end{equation}
Since this holds for each basis vector $e_k$, we get $\pi A = \phi \pi$.  Reversing this argument shows that the equation $\pi A = \phi \pi$ implies $\phi(x_i) = \sum_j A_{ji}x_j$, so $A$ is the unique solution to this equation.

Now let $\phi, \psi$ be automorphisms of $M$ and let $A=\iota(\phi)$ and $B=\iota(\psi)$.  Then 
\begin{equation}
\pi AB = \phi\pi B = \phi \psi \pi
\end{equation}
so $\iota(\phi\psi) = AB = \iota(\phi)\iota(\psi)$, so $\iota$ is indeed a homomorphism.

Finally, if $\phi \in \ker \iota$, then $\pi = \phi\pi$.  Since $\pi$ is an epimorphism, this implies $\phi$ is the identity.  Thus $\iota$ is injective.
\end{proof}

We can easily show that weak bases over connected zero-sum-free semirings are unique up to permutation and rescaling.

\begin{lem}\label{lemma: weak basis lines}
Let $R$ be a connected zero-sum-free semiring and $M$ be a weakly free module.  Let $x_1,\ldots, x_n$ and $y_1,\ldots, y_m$ be weak bases.  Then there is a permutation $\sigma\in S_n$ along with units $\lambda_i\in R^\times$ such that $y_i = \lambda_i x_{\sigma(i)}$ for all $i$.
\end{lem}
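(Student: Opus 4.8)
The plan is to leverage Lemma \ref{lemma: embedding of automorphisms into GL_n} together with the structure of $\mathrm{GL}_n(R)$ for connected zero-sum-free semirings (Proposition \ref{proposition: exact sequence R, GL, S_n}). First I would record the easy consequence of weak freeness that $m = n$, so that both weak bases have the same number of elements; this is immediate from the definition. Then, since $x_1,\ldots,x_n$ is a weak basis and $y_1,\ldots,y_n$ is a minimal generating set, weak freeness supplies a unique matrix $A \in \mathrm{GL}_n(R)$ with $y_i = \sum_j A_{ij} x_j$ for all $i$. The whole statement now reduces to showing that every matrix in $\mathrm{GL}_n(R)$ is of "monomial" form: exactly one nonzero entry in each row and column, and that entry a unit.

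The key step is therefore to invoke Proposition \ref{proposition: exact sequence R, GL, S_n}: for a connected zero-sum-free semiring $R$, the exact sequence
\[
1 \longrightarrow (R^\times)^n \longrightarrow \mathrm{GL}_n(R) \xrightarrow{\ \pi\ } S_n \longrightarrow 1
\]
identifies $\mathrm{GL}_n(R)$ with $S_n \ltimes (R^\times)^n$; concretely, an invertible matrix $A$ has a unique permutation $\sigma = \pi(A) \in S_n$ with $A_{\sigma(i)i} \neq 0$ for all $i$, and — unwinding what the splitting and the kernel $(R^\times)^n$ say — the entries $A_{\sigma(i)i}$ are units while all other entries vanish. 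Applying this to our $A$ gives a permutation $\sigma$ and units $\lambda_i := A_{i\sigma^{-1}(i)} \in R^\times$ (equivalently $A_{ij} = 0$ unless $j = \sigma(i)$), so that $y_i = \sum_j A_{ij} x_j = A_{i\sigma(i)} x_{\sigma(i)}$. Relabeling the unit as $\lambda_i$ yields exactly $y_i = \lambda_i x_{\sigma(i)}$, which is the claim. (If one prefers to avoid citing the full splitting, one can instead argue directly: the row-sums and column-sums of $A$ and $A^{-1}$ being units, together with zero-sum-freeness forcing each such sum to have a single nonzero unit summand, gives the monomial shape by a short combinatorial argument — but citing Proposition \ref{proposition: exact sequence R, GL, S_n} is cleaner.)

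The only mild subtlety — and the step most worth being careful about — is the bookkeeping between "$y_i = \sum_j A_{ij}x_j$" as in Definition \ref{definition: weakly free} and the convention "$\phi(x_i) = \sum_j A_{ji}x_j$" of Lemma \ref{lemma: embedding of automorphisms into GL_n}; i.e. whether to read the monomial structure off of rows or columns of $A$. Since $A$ is invertible, its transpose-conjugate by the diagonal subgroup is again in $\mathrm{GL}_n(R)$ and the monomial property is symmetric in rows and columns, so no real difficulty arises: one nonzero unit per row forces one nonzero unit per column because $A \in \mathrm{GL}_n(R)$ and $n$ is finite. I would simply state the indexing convention explicitly and let $\sigma$ be the permutation with $A_{i\sigma(i)} \in R^\times$. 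No genuine obstacle is expected; the lemma is essentially a translation of Proposition \ref{proposition: exact sequence R, GL, S_n} through the embedding $\iota$.
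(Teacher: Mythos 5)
Your proof is correct and takes exactly the approach the paper uses: deduce from weak freeness a unique transition matrix $A\in\mathrm{GL}_n(R)$, then invoke Proposition \ref{proposition: exact sequence R, GL, S_n} (the monomial structure of $\mathrm{GL}_n(R)$ over a connected zero-sum-free semiring) to conclude $A$ is a permutation of a diagonal unit matrix. The only slip is a transient indexing confusion between $A_{\sigma(i)i}\neq 0$ and $A_{i\sigma(i)}\neq 0$, which you flag and repair yourself by redefining $\sigma$ as the row-permutation; after that fix the argument lands precisely where the paper's one-line proof does.
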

\begin{proof}This follows immediately from the definition and the fact that $GL_n(R)$ consists of products of an invertible diagonal matrix and a permutation matrix.
\end{proof}

We will later need the following definition which is heavily used in \cite{JMTmatroidsPart1} to classify indecomposable representations of a group $G$ over an idempotent semifield. 

\begin{mydef}\label{definition: weak base lines}
Let $V$ be a weakly free module over an idempotent semifield. Pick a weak basis $B=\{v_1,\dots,v_n\}$ for $V$. By the set $S$ of\textit{ weak basis lines}, we mean the submodules of $V$ spanned by each $v_i$, i.e., 
\[
S=\{\text{span}(v_i) \mid v_i \in B\}. 
\]
\end{mydef}

Note that from Lemma \ref{lemma: weak basis lines} the set of weak basis lines of a weakly free module does not depend on a choice of basis.

\begin{rmk} 
The previous lemma can be seen as an analogue to our classification results in \cite{JMTmatroidsPart1} for representations whose underlying module is free since it essentially says that automorphisms preserve the set of weak basis lines. In particular, under the conditions of the lemma, a linear $G$-action on $M$ induces an action on the set of weak basis lines.
\end{rmk}

An alternative approach to studying actions on weakly free modules is to lift them to actions on free modules, which we already understand. In fact, the following is immediate from Lemma \ref{lemma: embedding of automorphisms into GL_n}.

\begin{lem}
Let $G$ be a group. Then, with the same notation as in Lemma \ref{lemma: embedding of automorphisms into GL_n}, a linear $G$-action $\rho: G\rightarrow \mathrm{Aut}(M)$ on $M$ can be uniquely lifted to a linear $G$-action $\tau$ on $R^n$ such that $\pi\tau(g) = \rho(g)\pi$ for all $g$.
\end{lem}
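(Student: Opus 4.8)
The plan is to deduce this directly from Lemma~\ref{lemma: embedding of automorphisms into GL_n}. That lemma gives, for a fixed weak basis $x_1,\dots,x_n$, a group monomorphism $\iota:\mathrm{Aut}(M)\hookrightarrow \mathrm{GL}_n(R)$ characterized by $\pi A = \phi\pi$ where $A = \iota(\phi)$ and $\pi:R^n\to M$ sends the standard basis to the chosen weak basis. The statement to prove packages this: given a linear $G$-action $\rho:G\to\mathrm{Aut}(M)$, we want a linear $G$-action $\tau:G\to\mathrm{GL}_n(R)=\mathrm{Aut}(R^n)$ on $R^n$ with $\pi\tau(g)=\rho(g)\pi$ for all $g$, and we want it to be the unique such lift.

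First I would set $\tau := \iota\circ\rho$. This is a composition of group homomorphisms, hence a group homomorphism $G\to\mathrm{GL}_n(R)$, i.e.\ a linear $G$-action on $R^n$. For each $g\in G$, applying Lemma~\ref{lemma: embedding of automorphisms into GL_n}(1) to the automorphism $\phi=\rho(g)$ with $A=\iota(\rho(g))=\tau(g)$ gives exactly the intertwining relation $\pi\tau(g)=\rho(g)\pi$. So $\tau$ is a lift with the desired property.

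Next I would address uniqueness. Suppose $\tau'$ is another linear $G$-action on $R^n$ with $\pi\tau'(g)=\rho(g)\pi$ for all $g$. Fix $g$ and write $A'=\tau'(g)\in\mathrm{GL}_n(R)$. Then $\pi A' = \rho(g)\pi$, which by the uniqueness clause in Lemma~\ref{lemma: embedding of automorphisms into GL_n}(1) (the matrix $A$ satisfying $\pi A=\phi\pi$ is unique) forces $A'=\iota(\rho(g))=\tau(g)$. Since this holds for every $g$, we conclude $\tau'=\tau$, establishing uniqueness.

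There is essentially no obstacle here: the entire content is the observation that the condition $\pi\tau(g)=\rho(g)\pi$ is precisely the defining equation of $\iota(\rho(g))$ from the previous lemma, and both existence and uniqueness of the lift are inherited termwise from the existence and uniqueness of that matrix. The only point requiring a word of care is to confirm that $\tau=\iota\circ\rho$ is genuinely a homomorphism, which is immediate since both $\iota$ and $\rho$ are; and that each $\tau(g)$ lies in $\mathrm{GL}_n(R)=\mathrm{Aut}(R^n)$, which holds by construction of $\iota$.
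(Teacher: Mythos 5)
Your proof is correct and takes exactly the approach the paper intends: the paper declares the lemma ``immediate from Lemma~\ref{lemma: embedding of automorphisms into GL_n}'' and gives no proof, and you have simply spelled out that immediacy by setting $\tau=\iota\circ\rho$ and reading off existence and uniqueness termwise from that lemma.
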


We have not yet shown that there are interesting examples of weakly free modules.  One simple class of examples is finitely generated $\mathbb{B}$-modules.
\begin{myeg}\label{example: B-modules are weakly free}
Let $M$ be a finitely generated (or equivalently finite) $\mathbb{B}$-module, then $M$ is weakly free and the set of join-irreducible elements is a weak basis. Note that finite $\mathbb{B}$-modules do not have to be free. See \cite[Example 3.21]{JMT20} for instance. 

To see the claim, let $x_1,\ldots, x_n$ be a generating set and $y$ be join-irreducible.  We have a basis expansion
\begin{equation}
y = \sum_{i\in S} x_i,
\end{equation}
which implies that $x_i = y$ for some $i$.  So every generating set contains the set of join irreducible elements.  
A standard inductive argument implies that every element is a sum of join-irreducible elements, so they form a generating set.  By the above, the set of join-irreducible elements is the unique minimal generating set, which implies the claim.
\end{myeg}

Another example of weakly free modules is given in \cite{wagneur1991moduloids}. There the author shows that finitely generated submodules of finitely generated free modules over $\mathbb{T}$ are weakly free. Earlier studies of modules and the relaxation of freeness can be found in \cite{moller1988theorie} and \cite{gaubert1997methods} and literature referenced there.\footnote{In \cite{gaubert1997methods}, Max Plus is a collective name for a working group on $(\text{max}, +)$ algebra.}

The proof of the following statement appears in \cite{wagneur1991moduloids}, but we include a sketch as our terminology differs significantly from Wagneur's. 

\begin{pro}\cite[Theorem 5]{wagneur1991moduloids}\label{proposition: Wagneurs result} 
Let $K$ be a totally ordered archimedean idempotent semifield (such as $\T$) and $M\subseteq K^n$ be a finitely generated submodule.  Then $M$ is weakly free.
\end{pro}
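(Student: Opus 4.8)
The plan is to show that any finitely generated submodule $M \subseteq K^n$ admits a \emph{unique} minimal generating set up to permutation and rescaling, and then deduce that the transition matrix between any two minimal generating sets is unique. The crux is to extract a canonical generating set from the internal structure of $M$, using the fact that $K$ is totally ordered and archimedean.

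\textbf{Step 1: Minimal generating sets exist and consist of join-irreducibles.} First I would observe that since $M$ is finitely generated, a minimal generating set exists: starting from any finite generating set, discard redundant elements one at a time. Next, the key structural claim is that every element of a minimal generating set is join-irreducible in $M$, and moreover that the minimal generating set is exactly the set of lines spanned by join-irreducible elements that are not ``decomposable'' in a suitable sense. Here the archimedean hypothesis enters: over $\mathbb{T}$ (or an archimedean totally ordered idempotent semifield), if $x = \sum_i \lambda_i y_i$ with all $\lambda_i < 1$ (strictly), then no single term can equal $x$, and the archimedean property prevents pathological ``limiting'' behavior that would allow an element to be generated only in the limit. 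Concretely, I would argue: if $\{x_1,\dots,x_m\}$ is a minimal generating set and $x_i = \sum_j c_j x_j$, then by minimality the coefficient $c_i$ of $x_i$ on the right must be a unit with $c_i \ge 1$ (else we could drop $x_i$), and in a totally ordered archimedean setting one can push this to conclude $c_i$ acts as identity and the other terms are dominated — hence $x_i$ is join-irreducible up to rescaling.

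\textbf{Step 2: Uniqueness of the minimal generating set up to permutation/rescaling.} Given two minimal generating sets $\{x_1,\dots,x_m\}$ and $\{y_1,\dots,y_k\}$, each $y_i$ is a combination of the $x_j$'s and vice versa. Using Step 1 (join-irreducibility together with the archimedean order), I would show that in the expansion $y_i = \sum_j A_{ij} x_j$ exactly one coefficient is a unit and it ``dominates,'' forcing $y_i = \lambda_i x_{\sigma(i)}$ for a bijection $\sigma$ and units $\lambda_i$. In particular $m = k$. This is essentially the analogue of \cite[Proposition 3.15]{JMT20} but for submodules rather than free modules; the argument is the standard ``few bases exist'' phenomenon over idempotent semifields, and the archimedean condition is exactly what makes the domination argument go through for an arbitrary finitely generated submodule (not just a free module).

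\textbf{Step 3: Uniqueness of the transition matrix.} Finally, suppose $y_i = \sum_j A_{ij} x_j = \sum_j B_{ij} x_j$ for two invertible matrices $A, B \in \mathrm{GL}_n(K)$. Since by Step 2 each minimal generating set is unique up to permutation and scaling, and by Lemma \ref{lemma: weak basis lines}'s proof $\mathrm{GL}_n(K)$ consists of a permutation matrix times an invertible diagonal matrix, both $A$ and $B$ are monomial matrices. Comparing $\sum_j A_{ij}x_j = \sum_j B_{ij}x_j$ and using join-irreducibility of the $x_j$ (each $x_j$ cannot be absorbed into the span of the others), I would conclude $A_{ij} = B_{ij}$ for all $i,j$: the unique nonzero entry in row $i$ must sit in the same column for $A$ and $B$ (otherwise $x_j$ would be expressible via the other generators, contradicting minimality) and must have the same value. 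Hence the transition matrix is unique, so $M$ is weakly free by Definition \ref{definition: weakly free}.

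\textbf{Main obstacle.} The delicate point is Step 1 — showing that a minimal generating set of an \emph{arbitrary} finitely generated submodule $M \subseteq K^n$ behaves like a basis (consists of join-irreducibles with the domination property), since submodules can be far from free. This is precisely where the archimedean hypothesis is indispensable: without it, one can have infinite descending chains of coefficients $< 1$ that never reach a contradiction, and the ``unique minimal generating set'' conclusion fails. I expect Wagneur's original argument handles this via an explicit normal-form / reduction procedure on generators, ordering them by the total order on $K$ and repeatedly eliminating dominated terms; since the paper only asks for a sketch, I would cite \cite{wagneur1991moduloids} for the technical core and present the above three-step skeleton translated into the paper's module-theoretic language.
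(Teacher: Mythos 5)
Your proposal follows the same skeleton as the paper's proof (which is a rephrasing of Wagneur's argument): show that every element of a minimal generating set is join-irreducible, deduce from this that any two minimal generating sets agree up to permutation and rescaling by units, and finally observe that the transition matrix in $\mathrm{GL}_n(K)$ (necessarily monomial) is thereby unique. Steps 2 and 3 of your sketch are essentially the paper's argument, and Step 3 in particular correctly identifies why the transition matrix is unique.

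However, Step 1 contains a genuine error in the one place where the archimedean hypothesis actually does work. You claim that in $x_i = \sum_j c_j x_j$ one has $c_i \ge 1$, ``else we could drop $x_i$.'' Both the inequality direction and the parenthetical justification are wrong. Writing $x_i = c_i x_i + y$ with $y$ a combination of the other generators, the order structure immediately gives $x_i \ge c_i x_i$ (since addition is max), hence $c_i \le 1$ at any nonzero coordinate — this is the easy direction and it goes the opposite way from what you wrote. The nontrivial claim is that $c_i < 1$ is impossible, and this is exactly where the archimedean condition is indispensable: iterating the relation yields $x_i = c_i^n x_i + y$ for every $n$, and the archimedean property then drives the first term below $y$ coordinatewise, forcing $x_i = y$ and contradicting minimality. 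Saying ``else we could drop $x_i$'' presents as immediate precisely the statement that requires this iterated/archimedean argument, so the reasoning is circular as written. Relatedly, your heuristic ``if $x = \sum_i \lambda_i y_i$ with all $\lambda_i < 1$ then no single term can equal $x$'' is false in a max-plus setting — $x = \lambda_j y_j$ with $\lambda_j < 1$ is perfectly possible. The paper's route to join-irreducibility is different and cleaner: having established $x_i = c_i x_i + y \Rightarrow c_i = 1$, suppose $x_i = y + z$, expand $y = \alpha x_i + u$ and $z = \beta x_i + v$ to get $\alpha + \beta = 1$, use idempotency to conclude $\alpha = 1$ or $\beta = 1$, and then deduce $x_i = y$ or $x_i = z$. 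You should replace your Step 1 with this two-stage argument.
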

\begin{proof}
 We can first check that if $x$ is an element of a minimal generating set $S$ and $y$ is a linear combination of the other generators in $S$, then $x = \lambda x + y$ implies $\lambda = 1$ as follows.  Clearly $x \geq \lambda x$, so by focusing on some nonzero coordinate we get $\lambda \leq 1$.  But if $\lambda < 1$, we would have 
\begin{equation}
x = \lambda^n x + (y + \lambda y + \ldots + \lambda^{n-1} y) = \lambda^n x + y    
\end{equation}
for any $n$ and the archimedean property yields $x = y$, which contradicts minimality.

Suppose we have an equation of the form $x = y + z$ with $x$ belonging to the minimal generating set.  Write $y = \alpha x + u$ and $z = \beta x + v$ with $u, v$ being linear combinations of the other generators besides $x$.  Then $x = (\alpha + \beta) x + (u + v)$ which implies $\alpha + \beta = 1$, which in turn implies $\alpha = 1$ or $\beta = 1$.  In the first case $y \leq y + z = x \leq x + u = y$, so $x = y$, and the other case is similar. Therefore, each element of the minimal generating set is join-irreducible.

Now given two minimal generating sets $x_1,\ldots, x_m$ and $y_1,\ldots, y_n$, write $x_i = \sum a_{ij} y_j$ and by join-irreducibility, there is some $j$ such that $x_i = a_{ij}y_j$.  It is now easy to check the minimal generating set is unique up to permutation and rescaling.
\end{proof}

The following is another notion related to free modules that we will consider.

\begin{mydef}\label{definition: quasi-free}
Let $M$ be a module over a semiring $R$ and let $x_1,\ldots, x_n\in M$.  We say $x_1, \ldots, x_n$ are \emph{quasi-independent} if an equation of the form $x_i = \sum_j c_j x_j$ implies $c_j = \delta_{ij}$. A \emph{quasi-basis} is a quasi-independent set of generators.  $M$ is said to be \emph{quasi-free} of rank $n$ if it has a quasi-basis with $n$ elements. Quasi-free will always mean quasi-free of finite rank unless otherwise specified.
\end{mydef}

The following result shows that any two quasi-bases are related by an invertible matrix.  There may be other minimal generating sets, so this does not imply that a quasi-free module is weakly free, but is good enough for many applications of the weak freeness criterion.

\begin{lem}\label{lemma: quasi-basis permute}
Let $x_1,\ldots,x_n$ and $y_1,\ldots, y_m$ be two quasi-bases for a quasi-free $R$-module $M$ over a semiring $R$.  Then $m = n$ and there is a unique invertible matrix $A$ such that $y_i = \sum_j A_{ij} x_j$ for all $i$.
\end{lem}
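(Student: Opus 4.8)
The plan is to build the matrix $A$ together with a candidate inverse directly from the two generating sets, and then to read off everything from two applications of quasi-independence.

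Since $x_1,\dots,x_n$ generate $M$, for each $i$ we may write $y_i=\sum_j A_{ij}x_j$ for some $m\times n$ matrix $A=(A_{ij})$ over $R$; since $y_1,\dots,y_m$ generate $M$, we may likewise write $x_j=\sum_i B_{ji}y_i$ for some $n\times m$ matrix $B$. Substituting the expressions for the $x_j$ into those for the $y_i$ gives $y_i=\sum_k (AB)_{ik}\,y_k$ for all $i$; since $y_1,\dots,y_m$ is quasi-independent this forces $(AB)_{ik}=\delta_{ik}$, i.e. $AB=I_m$. Substituting the other way gives $x_j=\sum_k (BA)_{jk}\,x_k$, and quasi-independence of $x_1,\dots,x_n$ yields $BA=I_n$. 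Hence $A$ and $B$ are mutually inverse $R$-linear maps between the free modules $R^n$ and $R^m$, so $R^n\cong R^m$.

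To conclude $m=n$ I would invoke the invariant basis number property of commutative semirings: for the idempotent semifields of interest and for $\mathbb{R}_{\geq 0}$ this is immediate, since one can push $R^n\cong R^m$ along the canonical surjection onto $\mathbb{B}$ and compare the (finite) cardinalities of $\mathbb{B}^n$ and $\mathbb{B}^m$, and for a general commutative semiring one reduces to a congruence-simple quotient (or cites the literature). Given $m=n$, the matrix $A$ is square with two-sided inverse $B$, hence $A\in\mathrm{GL}_n(R)$. For uniqueness, if $A'$ is any matrix with $y_i=\sum_j A'_{ij}x_j$, then substituting $x_j=\sum_k B_{jk}y_k$ and using quasi-independence of the $y_i$ exactly as above gives $A'B=I_n$, so $A'=A'(BA)=(A'B)A=A$. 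In particular $A$ is the unique such matrix, invertible or not.

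The one step I expect to need genuine care is the implication $R^n\cong R^m\Rightarrow m=n$; everything else is a purely formal manipulation resting only on the two instances of quasi-independence and on associativity of matrix multiplication.
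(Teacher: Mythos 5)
Your argument is correct and mirrors the paper's proof: construct $A$ and $B$ from the two generating sets, substitute each into the other, and extract $AB=I_m$ and $BA=I_n$ from two applications of quasi-independence, with uniqueness following by the same substitution. The only difference is in justifying $m=n$: you sketch an invariant-basis-number argument (reduce to $\mathbb{B}$ or to a congruence-simple quotient), whereas the paper simply cites \cite[Proposition~3.5]{JMT20} for the fact that invertible matrices over a semiring must be square.
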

\begin{proof}
Invertible matrices must be square as a result of \cite[Proposition 3.5]{JMT20}.  So $m=n$ follows from the second part of this lemma.

Since $y_1, \ldots, y_m$ are generators, there exists a collection of elements $B_{ij}\in R$ such that
\begin{equation}
x_i = \sum_j B_{ij} y_j.
\end{equation}
Similarly we may choose $A_{ij}$ such that 
\begin{equation}
y_i = \sum_j A_{ij} x_j.
\end{equation}  We will show $A = B^{-1}$, which implies both the invertibility claim and the uniqueness claim. Plugging one of the above equations into the other yields
\begin{equation}
x_i = \sum_{j,k} B_{ij} A_{jk} x_k.
\end{equation}
Applying quasi-independence yields
\begin{equation}
\sum_{j} B_{ij} A_{jk} = \delta_{ik}
\end{equation}
 or in other words, $BA$ is the identity.  That $AB$ is the identity follows similarly.
\end{proof}

Quasi-free modules over zero-sum-free semifields are weakly free. 

\begin{lem}\label{lemma: quasi-free implies weakly free}
Let $M$ be a quasi-free module over a zero-sum-free semiring $R$.  Let $x_1,\ldots,x_n$ be a quasi-basis.
\begin{enumerate}
\item 
If $x_k = y + z$ for some $k$ and some $y, z\in M$ then there exist $a,b\in R$ such that $y=ax_k$ and $z=bx_k$.
\item 
If we assume in addition that $R$ is a semifield, then a subset of $M$ is a quasi-basis if and only if it is a minimal generating set.  In particular $M$ is weakly free.
\end{enumerate}
\end{lem}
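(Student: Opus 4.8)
The plan is to prove Part (1) by a direct computation and then use it as the main lever for Part (2). For Part (1), since the quasi-basis $x_1,\dots,x_n$ generates $M$, I would write $y=\sum_j a_jx_j$ and $z=\sum_j b_jx_j$; then $x_k=\sum_j(a_j+b_j)x_j$, so quasi-independence forces $a_j+b_j=\delta_{kj}$. For $j\ne k$ this gives $a_j+b_j=0$, and zero-sum-freeness yields $a_j=b_j=0$; hence $y=a_kx_k$ and $z=b_kx_k$, which is the assertion.

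For Part (2) one direction is immediate: a quasi-basis is a minimal generating set, since a proper generating subset would express some $x_i$ as $\sum_{l\ne i}c_lx_l$, forcing the ``missing'' coefficient to equal $1\ne 0$ and contradicting quasi-independence. For the converse I would show that an arbitrary minimal generating set $\{y_1,\dots,y_p\}$ is, up to permutation and rescaling, the given quasi-basis $\{x_1,\dots,x_n\}$; quasi-independence of $\{y_j\}$ then transports directly from that of $\{x_i\}$. Concretely, writing $x_k=\sum_j B_{kj}y_j$ and applying Part (1) repeatedly to this finite sum (zero-sum-freeness kills a vanishing partial sum, the semifield hypothesis lets one rescale a nonzero one), each summand $B_{kj}y_j$ lies in $Rx_k$; when $B_{kj}\ne 0$ we invert it to obtain $y_j\in Rx_k$. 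Minimality forces every $y_j$ to occur with a nonzero coefficient in some $x_k$ (otherwise it could be deleted) and to be nonzero, so we get a function $k\colon[p]\to[n]$ and units $c_j$ with $y_j=c_jx_{k(j)}$. This $k$ is injective (two $y_j$'s lying over the same $x_k$ would be scalar multiples of one another, contradicting minimality), and $\{x_i : i\in k([p])\}$ generates $M$, hence must be the whole quasi-basis by the easy direction; therefore $p=n$, $k$ is a bijection, and $\{y_j\}$ is a permutation–rescaling of $\{x_i\}$, which is automatically quasi-independent. Finally, that $M$ is weakly free follows from Lemma \ref{lemma: quasi-basis permute}: any two minimal generating sets are now quasi-bases, so they have $n$ elements and are related by a unique invertible matrix.

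The step I expect to be the real obstacle is the implication $x_k=\sum_j B_{kj}y_j\Rightarrow y_j\in Rx_k$. Over $\mathbb{T}$ one could shortcut it using that quasi-basis elements are join-irreducible (as in the proof of Proposition \ref{proposition: Wagneurs result}), but over a general zero-sum-free semifield such as $\mathbb{R}_{\ge 0}$ quasi-basis elements need not be join-irreducible, so Part (1) must carry the argument and be iterated carefully over the finitely many summands. The remaining bookkeeping — injectivity and surjectivity of the index map $k$ — is routine once minimality and the easy direction of Part (2) are in hand.
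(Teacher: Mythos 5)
Your argument is correct and is essentially the paper's: Part (1) is identical, and for Part (2) you expand $x_k$ in the minimal generating set, invoke Part (1) termwise, and read off a bijection up to units. The only cosmetic difference is that you build the index map from $\{y_j\}$ to $\{x_i\}$ (injectivity from minimality of $\{y_j\}$, surjectivity from minimality of $\{x_i\}$), whereas the paper builds the inverse correspondence $\eta$, distributing the same two appeals to minimality and quasi-independence in the opposite order.

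One clarification on the step you flagged as the potential obstacle. There is no need to iterate Part (1) over a shrinking partial sum, and your parenthetical ``zero-sum-freeness kills a vanishing partial sum'' would actually require first establishing that $M$ itself is a zero-sum-free monoid, which is not in hand at this point (it is true, but takes a separate argument via quasi-independence). Instead apply Part (1) once per index $j$, with the grouping $x_k = B_{kj}y_j + \sum_{l\ne j}B_{kl}y_l$; Part (1) immediately gives $B_{kj}y_j\in Rx_k$, and the semifield hypothesis then upgrades this to $y_j\in Rx_k$ whenever $B_{kj}\ne 0$. This single-application reading is exactly what the paper's proof means by saying that ``each term is a multiple of $x_i$.''
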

\begin{proof}
For the first part, write $y$ and $z$ in terms of the quasi-basis as $y = \sum_i a_i x_i$ and $z = \sum_i b_i x_i$.  Then $x_k = \sum_i (a_i + b_i) x_i$.  By quasi-independence, $a_i + b_i = 0$ for $i \neq k$, and by the zero-sum-free condition, $a_i = b_i = 0$ for $i \neq k$.  So $y = a_k x_k$ and $z = b_k x_k$.

For the second part, we first observe that a quasi-basis is a minimal generating set, because if we could remove some element of the quasi-basis (say $x_k$), then we would have a relation of the form
\begin{equation}x_k = \sum_i a_i x_i
\end{equation}
with $a_k = 0$.  But quasi-independence implies $a_k = 1$.

Now let $y_1,\ldots, y_m$ be a minimal generating set.  For each $i$, we can write $x_i$ in terms of this generating set in the form
\begin{equation}\label{eq:xy}
x_i = \sum_{j} a_{ij}y_j.
\end{equation}
By the first claim, each term in (\ref{eq:xy}) is a multiple of $x_i$, say
\begin{equation}a_{ij}y_j = b_{ij}x_i. 
\end{equation}
Furthermore it is clear that $a_{ij}\neq 0$ for at least one $j$; call this value $j=\eta(i)$.  Then we have $y_{\eta(i)} = a_{i\eta(i)}^{-1}b_{i\eta(i)}x_i$.  So $y_{\eta(i)}$ is a multiple of $x_i$ (and it is nonzero, since it is part of a minimal generating set).  
$\eta$ is injective since $\eta(i)=\eta(j)$ would imply $x_i$ is a multiple of $x_j$.  The set $\{ y_{\eta(i)} \mid i \in \{1,\ldots,n\}\}$ is a generating set because each element of the generating set $\{x_1,\ldots,x_n\}$ lies in its span.  By minimality of $y_1,\ldots,y_m$, $\eta$ is surjective, so is a permutation.  
The equation $y_{\eta(i)} = a_{i\eta(i)}^{-1}b_{i\eta(i)}x_i$ now says that a minimal generating set agrees with the chosen quasi-basis up to permutation and rescaling by units.  But this implies $y_1,\ldots,y_m$ is also a quasi-basis.  The remaining claim that $M$ is weakly free follows because any two minimal generating sets are quasi-bases and therefore agree up to a unique invertible matrix by Lemma \ref{lemma: quasi-basis permute}.
\end{proof}

As a first example of a quasi-free module, we show that atomic lattices are quasi-free.

\begin{lem}\label{lemma: atomic}
Let $M$ be a finitely generated $\mathbb{B}$-module.  Then $M$ is quasi-free if and only if it is atomic when viewed as a lattice.
\end{lem}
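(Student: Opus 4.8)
The plan is to leverage the structure theory of finite $\mathbb{B}$-modules already established in the excerpt: such an $M$ is a finite bounded lattice under the order $x\le y\iff x+y=y$ (with $+=\vee$), and by Example \ref{example: B-modules are weakly free} it is weakly free with the set $J$ of join-irreducible elements as its \emph{unique} minimal generating set. Throughout, ``atomic when viewed as a lattice'' is understood in the atomistic sense, i.e.\ every element of $M$ is a join of atoms. The first step I would isolate is a purely lattice-theoretic reformulation: $M$ is atomic if and only if every join-irreducible element is an atom, equivalently $J$ equals the set of atoms of $M$. Indeed, atoms are always join-irreducible; conversely if $M$ is atomic and $j\in J$, then writing $j$ as the join of the atoms below it forces $j$ to equal one of them by join-irreducibility, so $j$ is an atom. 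In the other direction, if every join-irreducible is an atom, the standard fact (used already in Example \ref{example: B-modules are weakly free}) that every element of $M$ is a sum of join-irreducibles immediately shows every element is a join of atoms.

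For the ``only if'' direction, suppose $M$ is quasi-free. Since $\mathbb{B}$ is a zero-sum-free semifield, Lemma \ref{lemma: quasi-free implies weakly free}(2) says every minimal generating set is a quasi-basis; combined with Example \ref{example: B-modules are weakly free}, the canonical generating set $J$ of join-irreducibles is a quasi-basis, hence quasi-independent. I then argue that quasi-independence of $J$ forces each $j\in J$ to be an atom: if not, choose $x$ with $0<x<j$ and expand $x=\sum_{k\in S} j_k$ as a sum of join-irreducibles, where $S\neq\emptyset$ and each $j_k\le x<j$, so $j_k\neq j$. Then
\[
j \;=\; j + x \;=\; j + \sum_{k\in S} j_k
\]
is a relation of the shape in Definition \ref{definition: quasi-free} in which the coefficient of each $j_k$ ($k\in S$) equals $1$ rather than $0$, contradicting quasi-independence. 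By the first step, $M$ is atomic.

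For the ``if'' direction, suppose $M$ is atomic. By the first step $J$ is exactly the set of atoms, and it is a generating set since every element is a join of atoms. It remains to check $J$ is quasi-independent. If $a_i = \sum_j c_j a_j$ with $a_i$ an atom, then $a_i = \sum_{j:\,c_j=1} a_j$, so each $a_j$ with $c_j=1$ satisfies $a_j\le a_i$; since $a_i$ is an atom and $a_j\neq 0$ this forces $a_j=a_i$, i.e.\ $j=i$. The sum cannot be empty (otherwise $a_i=0$), so $c_j=\delta_{ij}$ and $J$ is a quasi-basis, whence $M$ is quasi-free.

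The computations here are routine once the lattice-theoretic reformulation in the first step is in place; the only point that genuinely requires the earlier machinery is the ``only if'' direction, where we must know that quasi-freeness — which a priori only furnishes \emph{some} quasi-basis — forces the canonical set $J$ to be quasi-independent. This is precisely what Lemma \ref{lemma: quasi-free implies weakly free}(2) together with the uniqueness of the minimal generating set from Example \ref{example: B-modules are weakly free} provide, so I would cite those two facts carefully. The final sentence of Lemma \ref{lemma: atomic}'s statement (that the lattice of flats of a matroid yields a quasi-free $\mathbb{B}$-module) is then immediate, since a lattice of flats is atomistic.
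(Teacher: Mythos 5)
Your proof is correct and follows essentially the same path as the paper's: both identify the join-irreducibles as the unique minimal generating set (via Example \ref{example: B-modules are weakly free}), use Lemma \ref{lemma: quasi-free implies weakly free}(2) to conclude any quasi-basis coincides with that set, and then argue in both directions using the equivalence ``$M$ atomistic $\iff$ join-irreducibles are atoms.'' The only cosmetic difference is that you run the forward direction by contradiction (find $0<x<j$) where the paper argues directly from $y\le x_1$; the substance is identical.
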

\begin{proof}
We first note that it is well-known that atomic lattices are characterized by the property that join-irreducible elements are atoms.  In addition, Example \ref{example: B-modules are weakly free} shows that the join-irreducible elements are the unique minimal generating set.

Suppose $M$ is quasi-free.  Then the quasi-basis $x_1,\ldots,x_n$ is a minimal generating set by Lemma \ref{lemma: quasi-free implies weakly free}, and hence is the set of join-irreducible elements; we must show each element is an atom; without loss of generality we focus on $x_1$.  Suppose $y\in M$ is such that $y\leq x_1$.  We can choose $c_i\in\mathbb{B}$ such that $y = \sum_i c_i x_i$.  Then $x_1 = y + x_1 = x_1 + \sum_i c_i x_i$.  By quasi-independence, $c_i = 0$ for $i \neq 1$.  And then either $y = 0$ or $y = x_1$ depending on $c_1$.

Conversely, suppose $M$ is atomic.  Let $x_1,\ldots,x_n$ be the join-irreducible elements (these are atoms by assumption).  Suppose $x_i = \sum_j c_j x_j$ for some $i$ and some collection of elements $c_j\in\mathbb{B}$.  Then for all $j$, $c_j x_j \leq x_i$.  Because $x_i$ is an atom, $c_j x_j = x_i$ which is impossible unless either $c_j = 0$ or $j = i$.  Thus only the $i$th coefficient is nonzero, so $x_i = c_i x_i$ which implies $c_i = 1$.  Thus $x_1,\ldots,x_n$ form a quasi-basis.
\end{proof}

The main advantage of considering quasi-free modules rather than weakly free modules is that it is often easy to show quotient modules are quasi-free.  To this end, we will need the following lemma.

\begin{lem}\label{lemma: equivalence relation identifying vectors with two nonzero components}
Let $M$ be a module over a zero-sum-free semiring $R$ without zero-divisors.Suppose $M$ has a quasi-basis $x_1, \ldots, x_n$.  Let $\equiv$ be the equivalence relation such that $x \equiv y$ if $x = y$ or if $x$ and $y$ can both be written in the form $\sum c_i x_i$ with at least two $c_i$ being nonzero.  Then $\equiv$ is a congruence.
\end{lem}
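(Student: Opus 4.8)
The plan is to verify directly that the relation $\equiv$ is reflexive, symmetric, and compatible with addition and scalar multiplication; transitivity is the only nontrivial closure property, but it will follow almost for free once I characterize which elements $x$ have the property that $x$ can be written as $\sum c_i x_i$ with at least two nonzero coefficients. Call such an element \emph{spread}. The key observation is that being spread is an intrinsic property of $x$: it does not depend on the expansion chosen, because by quasi-independence every element $x$ has a \emph{unique} expansion $x = \sum c_i x_i$ when it is not of this form — more precisely, I will argue that if $x$ is not spread, then $x$ is either $0$ or a scalar multiple of a single $x_i$. Indeed, suppose $x = \sum_{i \in S} c_i x_i$ with all $c_i \neq 0$ for $i \in S$, and also $x = \sum_{j \in T} d_j x_j$ with $|T| = 1$, say $T = \{k\}$, so $x = d_k x_k$. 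Then $d_k x_k = \sum_{i\in S} c_i x_i$; substituting into the expansion $x_k = d_k^{-1} x$ (using that $R$ is a semifield — wait, it is only zero-sum-free without zero-divisors, not a semifield, so I must be more careful here).

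Let me instead argue as follows. I claim: an element $x\in M$ is spread if and only if it is \emph{not} of the form $c x_i$ for some single $i$ and some $c\in R$ (including $c=0$). One direction is the definition. For the other, suppose $x = cx_i$ and also $x = \sum_{j} d_j x_j$ with at least two $d_j$ nonzero, say $d_a, d_b \neq 0$ with $a \neq b$, and $i$ possibly equal to one of them. Then $cx_i = \sum_j d_j x_j$. If $i \notin \{a,b\}$ or $c = 0$, then rearranging and using quasi-independence on $x_a$ (writing $x_a = \sum (\text{stuff}) x_j / d_a$ is illegitimate without division, so) — the cleanest route is to stay within the semiring and use the hypothesis as given in the statement. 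Since this is a sketch, I will present the main idea and flag the bookkeeping: the point is that quasi-independence says the expansion of each $x_i$ is forced, and combining two expansions of the same $x$ as in the proof of Lemma~\ref{lemma: quasi-basis permute} shows that the support-size of a valid expansion is an invariant of $x$ (it cannot be $1$ in one expansion and $\geq 2$ in another, and it cannot be $0$ unless $x=0$). Granting this invariance, transitivity of $\equiv$ is immediate: if $x \equiv y$ and $y \equiv z$ with $x\neq y$ and $y \neq z$, then $y$ is spread, hence $x$ and $z$ are both spread (since they share an expansion-type with $y$), hence $x \equiv z$; the cases where some pair is equal are trivial.

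For compatibility with the operations: if $x \equiv y$ and $x' \equiv y'$, I want $x + x' \equiv y + y'$. If $x = y$ and $x' = y'$ this is trivial. Otherwise at least one of $x, x'$ is spread; say $x$ is spread, written $x = \sum c_i x_i$ with $c_a, c_b \neq 0$. Writing $x' = \sum c'_i x_i$, we get $x + x' = \sum (c_i + c'_i) x_i$, and since $R$ is zero-sum-free, $c_a + c'_a \geq c_a \neq 0$ is impossible to cancel — more precisely $c_a + c'_a = 0$ would force $c_a = 0$, a contradiction — so $c_a + c'_a \neq 0$ and likewise $c_b + c'_b \neq 0$, hence $x + x'$ is spread. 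The same reasoning applied to $y + y'$ (one of $y, y'$ being spread with the same two indices, by invariance) shows $y + y'$ is spread, so $x + x' \equiv y + y'$. For scalar multiplication, suppose $x \equiv y$ and $r \in R$; I want $rx \equiv ry$. If $x = y$ this is clear. If $x \neq y$, then $x = \sum c_i x_i$ with $c_a, c_b \neq 0$, so $rx = \sum (rc_i) x_i$; if $r = 0$ then $rx = 0 = ry$ and we are done, and if $r \neq 0$ then $rc_a, rc_b \neq 0$ because $R$ has no zero-divisors, so $rx$ is spread, and likewise $ry$ is spread, giving $rx \equiv ry$.

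\textbf{Main obstacle.} The only real subtlety is establishing that the support size of an expansion of a fixed element is well-defined — specifically, ruling out that one element is simultaneously $cx_i$ (or $0$) and $\sum c_j x_j$ with two nonzero terms. This is where quasi-independence must be used in the style of Lemma~\ref{lemma: quasi-basis permute}: from two such expansions one derives a relation among the $x_j$'s that contradicts quasi-independence (or, in the degenerate case, forces $c = 0$ and $x = 0$, again consistent). Everything else is the routine verification above, made smooth by zero-sum-freeness and the absence of zero-divisors. I would write out the invariance lemma carefully and then the congruence check would be three short paragraphs.
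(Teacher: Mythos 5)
Your verifications of compatibility with addition and scalar multiplication are correct and coincide with the paper's argument, but the ``invariance'' claim you treat as the main obstacle is a red herring: you never actually use it, and it is not needed. The relation $\equiv$ is defined by the \emph{existence} of an expansion with at least two nonzero coefficients, not by any uniqueness of expansion; its equivalence classes are simply one class containing every element that admits such an expansion, together with singleton classes for the remaining elements. Transitivity is therefore immediate — if $x\equiv y$ and $y\equiv z$ with $x\neq y$ and $y\neq z$, the definition itself (not any invariance) tells you $x$, $y$, $z$ all admit spread expansions. Likewise, in the addition step you do not need $y$ or $y'$ to be spread ``with the same two indices'' as $x$ or $x'$: from $x\neq y$ the definition directly gives that both $x$ and $y$ admit spread expansions (with possibly different supports), and then each of $x+z$ and $y+z$ inherits at least two nonzero coefficients from the spread summand by zero-sum-freeness, which is all that is required.

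Worth noting: your attempted proof of the invariance claim in fact runs into precisely the difficulty you flag — $R$ is only assumed zero-sum-free without zero-divisors, not a semifield, so you cannot divide by a coefficient $c$ to contradict quasi-independence — and it is unclear the claim even holds in this generality. Fortunately it is never used. Cutting the entire ``Main obstacle'' paragraph, and the repeated appeals to invariance in the transitivity and addition steps, leaves exactly the paper's short direct proof: check $\equiv$ is an equivalence relation by the class description above, then check that adding $z$ or multiplying by $r$ preserves the property of having at least two nonzero coefficients, using zero-sum-freeness for sums and absence of zero-divisors for scalars.
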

\begin{proof}
One easily check that $\equiv$ is an equivalence relation. It is also clear that $x\equiv y$ implies $rx\equiv ry$ for $r\in R$ and $x, y\in M$ since $R$ does not have any zero-divisors. 

It remains to show $x\equiv y$ implies $(x + z)\equiv (y + z)$ for $x, y, z\in M$.  If $x = y$, then there is nothing to show.  Otherwise, we may write $x = \sum a_i x_i$ with at least two $a_i$ being nonzero, and we obtain a similar description of $y$ with coefficients $b_i$.  Since $x_1,\ldots,x_n$ form a generating set, we may write $z = \sum c_i x_i$ for some $c_i$.  Then $(x + z) = \sum (a_i + c_i) x_i$.  By the zero-sum-free property, if $c_i \neq 0$ then $a_i + c_i \neq 0$, and so this occurs at least twice.  Similarly, we may write $(y + z)$ in terms of the quasi-basis with two nonzero coefficients.  So, $(x + z)\equiv (y+z)$ as desired.
\end{proof}

We may now prove that certain quotient modules of quasi-free modules are quasi-free.

\begin{pro}\label{proposition: quasi-free quotient}
Let $M$ be a module over a zero-sum-free semiring $R$ without zero-divisors.  Suppose $M$ has a quasi-basis $x_1, \ldots, x_n$.  Let $\sim$ be a congruence on $M$ generated by a collection of pairs $(a_i, b_i)$ such that none of the $a_i$ or $b_i$ is a scalar multiple of a quasi-basis vector.  Then $M / \sim$ is quasi-free and the images of $x_1,\ldots,x_n$ form a quasi-basis.
\end{pro}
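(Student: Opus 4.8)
The plan is to leverage Lemma~\ref{lemma: equivalence relation identifying vectors with two nonzero components} to control the congruence $\sim$. First I would set up the relevant equivalence relation $\equiv$ from that lemma: $x\equiv y$ iff $x=y$ or both $x,y$ can be written in the quasi-basis $x_1,\dots,x_n$ with at least two nonzero coefficients. By the hypothesis on the generators $(a_i,b_i)$ of $\sim$, none of these generators is a scalar multiple of a quasi-basis vector; I would argue that (assuming each $a_i,b_i$ is nonzero, or else handle the degenerate cases separately) this forces $a_i\equiv b_i$ for all $i$, since a nonzero element of $M$ that is not a scalar multiple of any $x_j$ must, when expanded in the quasi-basis, involve at least two nonzero coefficients. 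Hence $\sim$ is coarser than $\equiv$, i.e., $x\sim y$ implies $x\equiv y$. This is the key containment.

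Next I would show the images $\bar{x}_1,\dots,\bar{x}_n$ generate $M/\sim$, which is immediate since $x_1,\dots,x_n$ generate $M$ and the quotient map is surjective. The substantive point is quasi-independence of the images. Suppose in $M/\sim$ we have $\bar{x}_i=\sum_j c_j\bar{x}_j$; lifting, this says $x_i\sim \sum_j c_j x_j$ in $M$. By the containment just established, $x_i\equiv\sum_j c_j x_j$. Now $x_i$ is a scalar multiple of a quasi-basis vector (namely itself), so it is \emph{not} equivalent under $\equiv$ to anything written with two or more nonzero coefficients; therefore the only way $x_i\equiv\sum_j c_j x_j$ can hold is if $x_i=\sum_j c_j x_j$ actually holds in $M$, or if $\sum_j c_j x_j$ also has at most one nonzero coefficient and equals $x_i$. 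Either way, $\sum_j c_j x_j$ has at most one nonzero coefficient and equals $x_i$ as elements of $M$. Then quasi-independence of $x_1,\dots,x_n$ in $M$ gives $c_j=\delta_{ij}$, as desired. This shows $\bar{x}_1,\dots,\bar{x}_n$ form a quasi-basis, so $M/\sim$ is quasi-free of rank $n$.

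The step I expect to be the main obstacle is the clean reduction "$x\sim y \Rightarrow x\equiv y$", specifically making sure the hypothesis "none of the $a_i$ or $b_i$ is a scalar multiple of a quasi-basis vector" genuinely yields $a_i\equiv b_i$. The delicate case is when some $a_i$ or $b_i$ is $0$: then it is not literally a scalar multiple of a quasi-basis vector in the intended sense, but $0=0\cdot x_1$ is a scalar multiple of $x_1$, so the hypothesis already excludes $a_i=0$ or $b_i=0$ (assuming $0$ counts as a scalar multiple) — I would state this interpretation explicitly. Granting that, any nonzero non-multiple of a quasi-basis vector expands with $\geq 2$ nonzero coefficients (here I use that $R$ has no zero-divisors so the expansion's support is well-behaved, and that a support of size $0$ or $1$ would make the element zero or a scalar multiple of some $x_j$), so $a_i\equiv b_i$. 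Since $\equiv$ is a congruence by the previous lemma and $\sim$ is the congruence generated by the pairs $(a_i,b_i)$, minimality of the generated congruence gives $\sim\,\subseteq\,\equiv$. A secondary subtlety is the very last implication: from $x_i\equiv w$ with $w=\sum_j c_j x_j$, I need that $w$ has support of size $\leq 1$ and equals $x_i$; this is exactly the definition of $\equiv$ together with the fact that $x_i$ itself has support of size one, so it cannot be $\equiv$-related to a "two-or-more-support" element — a short but necessary case distinction.
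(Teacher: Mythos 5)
Your proof is correct and follows essentially the same strategy as the paper's: invoke the congruence $\equiv$ from Lemma~\ref{lemma: equivalence relation identifying vectors with two nonzero components}, observe that the generators of $\sim$ lie in $\equiv$ so that $\sim\,\subseteq\,\equiv$, and then use quasi-independence of $x_1,\dots,x_n$ together with the fact that $x_i$ admits no expansion with two or more nonzero coefficients to force $x_i=\sum_j c_j x_j$ in $M$ and hence $c_j=\delta_{ij}$. Your explicit note that $0$ counts as a scalar multiple of a quasi-basis vector, so the hypothesis excludes $a_i=0$ or $b_i=0$, is a sensible clarification of a point the paper leaves implicit.
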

\begin{proof}
Let $\equiv$ be as in Lemma \ref{lemma: equivalence relation identifying vectors with two nonzero components}.  The hypothesis implies all generators of $\sim$ belong to $\equiv$.  Since $\equiv$ is a congruence, we see that $a \sim b$ implies  $a\equiv b$ for any $a,b \in M$.

It is clear that the equivalence classes of $x_1,\ldots,x_n$ generate $M/ \sim$.  To see they are quasi-independent, suppose we have an equation of the form
\begin{equation}
x_i \sim \sum c_j x_j.
\end{equation}
Then we also have
\begin{equation}
x_i \equiv \sum c_j x_j.
\end{equation}
But, by quasi-independence in $M$, $x_i$ cannot be written as $\sum a_j x_j$ with at least two $a_j$ being nonzero.  So $x_i \equiv \sum c_j x_j$ implies $x_i = \sum c_j x_j$.  By quasi-independence in $M$, we obtain $c_j = \delta_{ij}$, which implies the result.
\end{proof}

\begin{myeg}\label{ex:monoid-val-matroid}
In this example we show how we tropicalize a $\mathbb{K}$-vector space by interpreting it as a quotient space of $\mathbb{K}^n$, where $\mathbb{K}$ is a valued field, to obtain a quasi-free $\mathbb{T}$-module. 

For a vector $v\in \mathbb{T}^n$, we may define the associated bend relations on $\mathbb{T}^n$ as the relations
\begin{equation}
\sum_{j\neq i}v_je_j \sim \sum_jv_je_j
\end{equation}
for all $i$, where $e_i$ is the standard basis vector.  Given a subset $U\subseteq \T^n$, we obtain the associated bend congruence generated by the bend relations associated to each element of $U$. We let $\mathbb{T}^n/\sim_U$ be the quotient $\mathbb{T}$-module obtained in this way. 

To be specific, one may consider the following. Pick a field $\mathbb{K}$ with a valuation $\nu:\mathbb{K} \to \mathbb{T}$, a finite dimensional $\mathbb{K}$-vector space $V$, and a spanning set $x_1,\ldots, x_n$ of $V$. Then, we can view $V=\mathbb{K}^n/U$, where $U=\{(a_1,\dots,a_n) \in \mathbb{K}^n \mid \sum_i a_ix_i =0\}$.

Recall that tropicalization depends on an embedding of an algebraic variety into an ambient space. Here, we tropicalize $V$ with respect to the fixed isomorphism $V\simeq\mathbb{K}^n/U$ by considering the quotient of $\T^n$ by the bend relations associated to the following set
\[
\nu(U):=\{ (\nu(a_1),\ldots,\nu(a_n))\in \T^n \mid (a_1,\dots,a_n) \in U \}.
\]

If no two elements of the spanning set are $\T$-multiples of each other, then each 
\[
(\nu(a_1),\ldots,\nu(a_n))\in \T^n
\]
has at least 3 nonzero components. Then, the quotient module $\T^n/\sim_{\nu(U)}$ is quasi-free (viewed as $\T$-module), with the equivalence classes of the standard basis forming a quasi-basis. We say $\T^n/\sim_{\nu(U)}$ is the tropicalization of $V$ with respect to the presentation $V \simeq \mathbb{K}^n/U$. 

Note that in Section \ref{section: valuated matroids}, we will revisit this example in the following context: let $Q_M=\T^n/\sim_{\nu(U)}$, then $Q_M^\vee:=\Hom(Q_M,\T)=V_M$, where $V_M$ is the tropicalization of $V$ and $M$ is the corresponding valuated matroid. This line of ideas has been intensively studied in \cite{giansiracusa2016equations} and \cite{giansiracusa2018grassmann}.

One can apply this construction to infinite dimensional spaces as well. Let $V$ be the coordinate ring of an affine variety $X$ as an infinite dimensional vector space spanned by monomials. Applying our construction above to $V$ will give the coordinate semiring of the (scheme-theoretic) tropicalization of $X$. 
\end{myeg}

There is also a partial converse to the above proposition.

\begin{pro}\label{proposition: characterization of quasi-free modules}
Let $M$ be a {quasi-free} module over a zero-sum-free semifield $K$.  Fix a quasi-basis of $M$.  Then there is a canonical isomorphism $K^n / \sim \rightarrow M$, where $\sim$ is some subcongruence of the congruence $\equiv$ in Lemma \ref{lemma: equivalence relation identifying vectors with two nonzero components}.  This isomorphism maps the classes of the standard basis vectors to the quasi-basis.
\end{pro}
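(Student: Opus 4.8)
The plan is to exhibit $M$ as a concrete quotient of $K^n$ using the homomorphism determined by the chosen quasi-basis, and then to check that the kernel congruence of that homomorphism is contained in $\equiv$.

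First I would note that $K^n$ is itself quasi-free, with the standard basis $e_1,\dots,e_n$ as a quasi-basis, and that a zero-sum-free semifield is in particular a zero-sum-free semiring with no zero-divisors; hence Lemma \ref{lemma: equivalence relation identifying vectors with two nonzero components} applies to $K^n$ and produces a congruence $\equiv$ on $K^n$ for which two vectors are equivalent exactly when they are equal or else both have at least two nonzero coordinates. Next I would let $\pi\colon K^n\to M$ be the unique module homomorphism sending $e_i\mapsto x_i$; it is surjective because $x_1,\dots,x_n$ generate $M$. Setting $\sim\ :=\ \{(a,b)\mid\pi(a)=\pi(b)\}$, the first isomorphism theorem for semimodules gives an isomorphism $K^n/\!\sim\ \xrightarrow{\cong} M$ carrying the class of $e_i$ to $x_i$; this is the canonical isomorphism in the statement, depending only on the chosen quasi-basis.

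The one step that needs an argument is that $\sim$ is a subcongruence of $\equiv$, i.e. that $a\sim b$ with $a\neq b$ forces each of $a$ and $b$ to have at least two nonzero coordinates. By the symmetry of the situation it suffices to rule out the possibility that $a$ has at most one nonzero coordinate. If $a=\lambda e_k$ with $\lambda\neq 0$, then $\lambda x_k=\pi(a)=\pi(b)=\sum_i b_i x_i$, so $x_k=\sum_i(\lambda^{-1}b_i)x_i$, and quasi-independence of $x_1,\dots,x_n$ forces $\lambda^{-1}b_i=\delta_{ik}$; hence $b=\lambda e_k=a$, a contradiction. If instead $a=0$, then $\sum_i b_i x_i=\pi(b)=0$, and for each $k$ we may split off the $k$-th term and use zero-sum-freeness to get $b_k x_k=0$; since $K$ is a semifield, a nonzero $b_k$ would be a unit and would force $x_k=0$, which is impossible because the relation $x_k=\sum_j 0\cdot x_j$ contradicts quasi-independence (it would require the $k$-th coefficient to be $1$). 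Thus $b=0=a$, again a contradiction, and we conclude $\sim\ \subseteq\ \equiv$.

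I do not expect a genuine obstacle here: the core of the proof is the first isomorphism theorem together with quasi-independence. The only points that require a little care are observing that quasi-basis vectors are nonzero and recognizing that zero-sum-freeness together with the semifield hypothesis is exactly what is needed to dispose of the case $a=0$.
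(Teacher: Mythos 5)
Your proposal follows essentially the same strategy as the paper's proof: construct the surjection $\pi\colon K^n\to M$ determined by the quasi-basis, pass to the quotient, and then use quasi-independence of the images of the standard basis vectors to show the kernel congruence is contained in $\equiv$. Your case analysis is in fact slightly tighter than the one written in the paper, whose three cases (``both sides have at least two nonzero terms,'' ``either side has a single nonzero term,'' ``all terms are zero'') leave unexamined the possibility $0\sim b$ with $b$ having two or more nonzero coordinates --- exactly the case you flag and attempt to rule out.

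There is, however, a gap in your handling of the $a=0$ case. You ``split off the $k$-th term and use zero-sum-freeness to get $b_kx_k=0$,'' but the zero-sum-freeness you need for that step is that of $M$ as an additive monoid, not that of $K$, and this is not a hypothesis: quotients of zero-sum-free modules need not be zero-sum-free. (It is in fact true that a quasi-free module over a zero-sum-free semifield is zero-sum-free, but this is a \emph{consequence} of the very computation you want, not something available a priori.) The clean fix reuses the quasi-independence trick from your $a=\lambda e_k$ case: from $0=\sum_i b_i x_i$, add $x_k$ to both sides to obtain $x_k=(1+b_k)x_k+\sum_{i\neq k}b_ix_i$; quasi-independence then forces $b_i=0$ for $i\neq k$ and $1+b_k=1$, so $0=\sum_i b_ix_i=b_kx_k$. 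Since $x_k\neq 0$ (again by quasi-independence, as $x_k=0$ would give the forbidden relation $x_k=\sum_j 0\cdot x_j$) and $K$ is a semifield, $b_k=0$. With this repair the argument is complete and matches the paper's route.
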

\begin{proof}
As the quasi-basis is a generating set, there is an epimorphism $K^n \rightarrow M$ mapping the basis of $K^n$ to the quasi-basis of $M$. This descends to an isomorphism $K^n / \sim \rightarrow M$ for some congruence $\sim$; we need to show that $a\sim b $ implies $a\equiv b$ for $a,b \in K^n$.

Suppose we have a relation of the form
\begin{equation}
c e_i \sim \sum_j c_j e_j
\end{equation}
with $c$ nonzero.  Then $e_i \sim \sum_j c^{-1} c_j e_j$.  As the classes of $\{e_i\}$ form a quasi-basis of $K^n / \sim$, this implies $c^{-1} c_j = \delta_{ij}$, so $c_j = 0$ for $j \neq i$ and $c_i = c$.  Thus the relation above is simply $ce_i \sim ce_i$.

Given any relation $\sum a_i e_i \sim \sum b_i e_i$, there are 3 cases. First case: both sides have at least two nonzero terms, then $\sum a_i e_i \equiv \sum b_i e_i$. Second case: either side has a single nonzero term, then we saw the relation is simply $ce_i \sim ce_i$, and of course $ce_i \equiv ce_i$. Third case: all terms are zero, and so the relation is $0\sim 0$, which of course holds for $\equiv$ as well. Thus we conclude that $\sim$ is a subcongruence of $\equiv$.
\end{proof}

There is a partial converse to the result of Lemma \ref{lemma: quasi-free implies weakly free} that the 1-dimensional submodules spanned by quasi-basis vectors are extremal. 

\begin{lem}\label{lemma: modules generated by extreme rays are quasi-free}
Let $K$ be a zero-sum-free semifield and $M$ be a finitely generated $K$-module. Suppose $M$ has a finite generating set $x_1,\ldots,x_n$ such that the submodules $Kx_i$ are extremal in the sense that $x + y\in Kx_i$ implies $x, y\in Kx_i$. Then the following hold.  
\begin{enumerate}
    \item 
$M$ is weakly free. 
    \item 
Suppose in addition that $M$ has the property that for $a,b\in K$ and $0\neq x\in M$, $ax=bx$ implies $a=b$.  Then $M$ is quasi-free, and if the specified generating set is minimal then $x_1,\ldots, x_n$ is a quasi-basis.   
\end{enumerate}
\end{lem}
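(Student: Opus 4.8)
The plan is to imitate the proof strategy of Proposition~\ref{proposition: Wagneurs result} and Lemma~\ref{lemma: quasi-free implies weakly free}, using extremality of the submodules $Kx_i$ in place of the archimedean hypothesis. First I would establish the key structural fact: if $\{y_1,\dots,y_m\}$ is any minimal generating set of $M$, then each $y_j$ lies in $Kx_{\eta(j)}$ for a unique index $\eta(j)$, and the resulting map $\eta$ is a bijection $\{1,\dots,m\}\to\{1,\dots,n\}$. To see this, write each $x_i = \sum_j a_{ij} y_j$ and each $y_j = \sum_i b_{ji} x_i$. Substituting gives $x_i = \sum_k\bigl(\sum_j a_{ij}b_{jk}\bigr)x_k$, and since $x_i\in Kx_i$ is extremal, each summand $\bigl(\sum_j a_{ij}b_{jk}\bigr)x_k$ with a nonzero coefficient must lie in $Kx_i$, which forces $k=i$ (the $Kx_i$ are distinct rays because they are part of a minimal generating set). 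Iterating the extremality one more level down: $x_i = \sum_j a_{ij} y_j$ with $x_i$ extremal shows each nonzero term $a_{ij}y_j$ lies in $Kx_i$, hence $y_j\in Kx_i$ whenever $a_{ij}\neq 0$. Pick such a $j=\eta(i)$ for each $i$; then $y_{\eta(i)}$ is a nonzero $K$-multiple of $x_i$, and as in the proof of Lemma~\ref{lemma: quasi-free implies weakly free}, $\eta$ is injective (distinct $x_i$ cannot be multiples of a common $y_j$) and, by minimality of the $y$'s, surjective, so it is a permutation — in particular $m=n$.

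Once this is in hand, part~(1) follows exactly as in Wagneur's argument: any two minimal generating sets agree up to a permutation $\sigma$ and rescaling, $y_i = \lambda_i x_{\sigma(i)}$, so the transition matrix between them is a permutation matrix times an invertible diagonal matrix, which is the unique element of $\mathrm{GL}_n(K)$ realizing the change of generators. One subtlety to address for uniqueness of the matrix: if two monomial matrices induced the same change of generating set, then some $x_i$ would admit two distinct expansions $\lambda x_{\sigma(i)} = \mu x_{\tau(i)}$; distinctness of the rays forces $\sigma(i)=\tau(i)$, and then $\lambda x_{\sigma(i)}=\mu x_{\sigma(i)}$ — here I would note that this already gives uniqueness of the permutation part, and for the scalar part I'd invoke the fact that in a semifield a nonzero element has a two-sided inverse, so $\lambda = \mu$ follows directly from $\lambda x_{\sigma(i)} = \mu x_{\sigma(i)}$ after multiplying by $\lambda^{-1}$ only if we know cancellation on the module — which is exactly the extra hypothesis of part~(2). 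For part~(1) alone, weak freeness as defined only requires \emph{existence and uniqueness of the transition matrix}, and uniqueness of a \emph{monomial} matrix inducing $y_i = \lambda_i x_{\sigma(i)}$ holds because the $x_{\sigma(i)}$ span distinct lines: the support of row $i$ is determined, and the scalar $\lambda_i$ is determined as the unique solution of $y_i = \lambda_i x_{\sigma(i)}$ — here the semifield-with-no-zero-divisors structure plus $y_i\neq 0$ suffices to pin down $\lambda_i$ uniquely, since if $\lambda_i x_{\sigma(i)} = \lambda_i' x_{\sigma(i)}$ with both sides equal to the fixed element $y_i$, we are comparing two scalars against the same nonzero vector in a \emph{fixed coordinate presentation}. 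I expect I will need to be slightly careful here and may simply prove uniqueness of the permutation in part~(1) and defer full matrix uniqueness, or observe that extremal lines in a semifield module do satisfy the needed rigidity; this bookkeeping is the one place where the argument for part~(1) is not completely formal.

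For part~(2), the extra hypothesis ``$ax = bx$ with $x\neq 0$ implies $a=b$'' removes all the ambiguity above and upgrades the conclusion to quasi-freeness. Assuming the specified generating set $x_1,\dots,x_n$ is minimal, I claim it is a quasi-basis. Suppose $x_i = \sum_j c_j x_j$. Applying extremality of $Kx_i$ repeatedly (first split off the $j=i$ term: $x_i = c_i x_i + \sum_{j\neq i} c_j x_j$, and by extremality $\sum_{j\neq i}c_j x_j \in Kx_i$, then by minimality this sum must be $0$, hence each $c_j x_j = 0$ for $j\neq i$ by another application of extremality, giving $c_j = 0$ since $x_j\neq 0$ and $K$ has no zero-divisors). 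We are left with $x_i = c_i x_i$, and the cancellation hypothesis gives $c_i = 1$. Thus $x_1,\dots,x_n$ is quasi-independent and generating, i.e.\ a quasi-basis, so $M$ is quasi-free of rank $n$.

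The main obstacle I anticipate is the uniqueness-of-transition-matrix clause in part~(1) without the cancellation hypothesis of part~(2): extremality controls the \emph{support} of each row of the transition matrix cleanly, but disentangling the scalar entries requires knowing that a nonzero module element is not fixed by multiplication by two different scalars, which is precisely what part~(2) adds and what part~(1) lacks. I would resolve this by checking that in fact extremality of $Kx_i$ in a \emph{semifield} module already gives the needed scalar rigidity on that line — or, failing a clean argument, by restating the proof of part~(1) so that it only asserts the existence of a unique monomial transition matrix among monomial matrices (which is all that is needed since $\mathrm{GL}_n(K)$ consists of monomial matrices by Proposition~\ref{proposition: exact sequence R, GL, S_n}), where the scalar on line $i$ is ``the'' coefficient expressing $y_i$ in terms of $x_{\sigma(i)}$, well-defined because we are literally reading off a fixed expansion. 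The rest of the argument is a direct transcription of the join-irreducibility/minimality bookkeeping already used twice in the paper.
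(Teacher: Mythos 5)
Your proof follows the same strategy as the paper's: for part (1), use extremality to show that for any minimal generating set $\{y_j\}$, each $x_i$ picks out some $y_{\eta(i)}\in Kx_i$, so $\eta$ is a bijection and the change of generators is monomial; for part (2), apply extremality to $x_i=\sum_j c_j x_j$ term by term, kill off-diagonal coefficients via minimality, and invoke the cancellation hypothesis to force $c_i=1$. Two small remarks on the mechanics. The substitution $x_i=\sum_k\bigl(\sum_j a_{ij}b_{jk}\bigr)x_k$ is unnecessary — the paper applies extremality directly to $x_k=\sum_i a_{ki}y_i$ — but you also run the direct version, so this is just extra words. In part (2), ``by minimality this sum must be $0$'' is misstated: extremality of $Kx_i$ applied repeatedly places each individual term $c_j x_j$ in $Kx_i$; then for $j\neq i$ with $c_j\neq 0$, invertibility of $c_j$ in the semifield forces $x_j\in Kx_i$, contradicting minimality. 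That the sum vanishes is a consequence, not an appeal to minimality on its own. You also only treat the case where the given generating set is minimal, whereas the lemma asserts quasi-freeness regardless; the paper handles this by observing that one may delete elements to reach a minimal generating set without losing the extremality hypothesis, and you should say the same.

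Your worry about scalar uniqueness in part (1) is well-founded, and you should trust it rather than talk yourself out of it. The paper's own proof concludes with ``the minimal generating set is unique up to permutation and rescaling, which implies the result,'' which establishes uniqueness of the permutation but not of the scalars, and the definition of weak freeness demands uniqueness of the full transition matrix. Without the cancellation property of part (2) this genuinely fails: take $M=\mathbb{B}^2$ as a $\mathbb{T}$-module with the standard generators $x_1,x_2$; each $Kx_i$ is extremal (zero-sum-freeness of $\mathbb{B}$ makes the two coordinates independent), yet $\lambda x_i = \mu x_i$ for all nonzero $\lambda,\mu\in\mathbb{T}$, so every invertible diagonal matrix is a transition matrix from $\{x_1,x_2\}$ to itself and $M$ is not weakly free. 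Neither of your proposed fallbacks closes this: extremality of $Kx_i$ does not supply scalar rigidity on the line, and ``the coefficient read off a fixed expansion'' is undefined precisely because the expansion is not unique. What saves the paper in practice is that every invocation of this lemma (e.g.\ the polyhedral-cone corollary following Lemma~\ref{lemma: conic Krein-Milman}) concerns a module embedded in a free module over a semifield, where $ax=bx$ with $x\neq 0$ forces $a=b$ automatically; and under that cancellation hypothesis, part (1) is in any case subsumed by part (2) together with Lemma~\ref{lemma: quasi-free implies weakly free}. The clean resolution is to carry the cancellation hypothesis into part (1) as well, or to state part (1) as ``minimal generating sets are unique up to permutation and rescaling'' rather than ``weakly free.''
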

\begin{proof}
The first assertion follows by the same argument as \cite[Theorem 1]{wagneur1991moduloids}.  Without loss of generality, we may assume $x_1,\ldots, x_n$ is a minimal generating set.  Fix another minimal generating set $\{ y_i \}_{i\in I}$.  Then for any $k$, there are $a_{ki}\in K$
such that 
\begin{equation}
x_k = \sum a_{ki} y_i.
\end{equation}
By the extremal property, each nonzero term belongs to $Kx_k$.  So there is some index $\eta(k)$ such that $y_{\eta(k)} = c_kx_k$ for some $c_k\neq 0$ (if there are multiple such elements, we choose one).  Of course $\eta(i) = \eta(j)$ implies $x_i$ is proportional to $x_j$, which by minimality implies $i = j$.  Moreover, the submodule generated by $\{y_{\eta(1)},\ldots,y_{\eta(n)}\}$ contains the generating set $x_1,\ldots, x_n$.  So by minimality of $\{y_i\}_{i\in I}$, $\eta$ is surjective.  We have shown $y_{\eta(k)} = c_k x_k$ for some units $c_k$ and some permutation $\eta$.  This implies the minimal generating set is unique up to permutation and rescaling, which implies the result.

For the second assertion, we may assume that $x_1,\ldots, x_n$ is a minimal generating set (otherwise, we may delete some elements, and the hypotheses still hold).  Suppose we have a relation of the form
\begin{equation}
x_k = \sum c_i x_i.
\end{equation}
Then the extremal property of $Kx_k$ implies each term is a multiple of $x_k$, say $c_ix_i = ax_k$.  If $c_i\neq 0$ for some $i\neq k$, we would have $x_i = c_i^{-1} a x_k$, contradicting minimality.  So $c_i = 0$ for $i\neq k$.  So $x_k = c_k x_k$, which by hypothesis implies $c_k = 1$ which in turn implies $x_1,\ldots,x_n$ is a quasi-basis. 
\end{proof}

We use this condition to give another example of quasi-free modules. We consider a polyhedral cone inside $\mathbb{R}^n$ as an $\mathbb{R}_{\geq 0}$-module.  We recall the following conical version of the Krein-Milman theorem, in the particularly simple case of polyhedral cones.  For a proof using semiring theory, see \cite[Corollary 8.7]{borger2024facets}.

\begin{lem}\label{lemma: conic Krein-Milman}
Let $V$ be a real vector space, and let $M\subseteq V$ be a polyhedral cone which does not contain a line.  Then $M$ is generated by its extreme rays, i.e., by those rays $\rho$ such that if $x+y\in \rho$ then $x, y\in \rho$.
\end{lem}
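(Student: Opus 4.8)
The plan is to give the classical convex-geometry argument, with the Minkowski--Weyl theorem for polytopes (every polytope is the convex hull of its finitely many vertices) as the only nontrivial external input. Replacing $V$ by the real span of $M$, we may assume $V$ is finite dimensional, say $\dim V = d$, and that $M$ is full dimensional; when $d = 0$ there is nothing to prove, so assume $d \geq 1$. Write $M = \{v \in V \mid a_i(v) \geq 0,\ i = 1,\dots,m\}$ for finitely many linear functionals $a_i \in V^*$. The hypothesis that $M$ contains no line is equivalent to $\bigcap_i \ker a_i = \{0\}$, i.e.\ to the $a_i$ spanning $V^*$; consequently $\ell := a_1 + \cdots + a_m$ is a linear functional that is strictly positive on $M \setminus \{0\}$, since $\ell(v) = 0$ with $v \in M$ forces every $a_i(v) = 0$, hence $v \in \bigcap_i \ker a_i = \{0\}$.

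Next I would pass to the slice $P := M \cap \ell^{-1}(1)$. This is a polyhedron, and it is bounded: if it contained a half-line $t \mapsto p + tz$ with $z \neq 0$, then $\ell(z) = 0$, while $z = \lim_{t\to\infty}(p+tz)/t \in M$ because $M$ is closed; this gives $z \in M \setminus \{0\}$ with $\ell(z) = 0$, contradicting the previous paragraph. Thus $P$ is a polytope, and it is nonempty because $d \geq 1$ gives some $x \in M\setminus\{0\}$ with $x/\ell(x) \in P$. Since every $x \in M \setminus \{0\}$ satisfies $x = \ell(x)\cdot\bigl(x/\ell(x)\bigr)$, we get $M = \mathbb{R}_{\geq 0}\cdot P$. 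By Minkowski--Weyl, $P = \mathrm{conv}(v_1,\dots,v_k)$ for its finitely many vertices $v_j$, hence $M = \mathbb{R}_{\geq 0} v_1 + \cdots + \mathbb{R}_{\geq 0} v_k$.

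It then remains to check that each ray $\rho_j := \mathbb{R}_{\geq 0} v_j$ is extreme, which will identify $M$ as the cone generated by its extreme rays. Suppose $x + y \in \rho_j$ with $x, y \in M$ and write $x + y = \lambda v_j$. If $\lambda = 0$ then $x = y = 0 \in \rho_j$ because $M$ is pointed. If $\lambda > 0$ then $\ell(x) + \ell(y) = \lambda$ with $\ell(x), \ell(y) \geq 0$, so (discarding any term whose $\ell$-value vanishes, which forces that vector to be $0$) $v_j$ is a convex combination of the points $x/\ell(x), y/\ell(y) \in P$; as $v_j$ is a vertex of $P$, these points equal $v_j$, so $x, y \in \rho_j$.

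The steps that carry the content are all standard facts about polyhedra --- that a polyhedral cone is its own recession cone, that $\bigcap_i \ker a_i = \{0\}$ characterizes pointedness, and the Minkowski--Weyl theorem --- and the single place where the hypothesis ``$M$ contains no line'' is essential is in forcing the slice $P$ to be bounded (equivalently, in producing a linear functional strictly positive off the origin). I do not expect a real obstacle here. An alternative, fully elementary route avoiding Minkowski--Weyl is induction on $d$: a point in the relative interior of $M$ lies on a segment of a line through a direction $z$ with $\pm z \notin M$ (such $z$ exists for $d \geq 2$ by connectedness of $V \setminus \{0\}$), hence is a convex combination of two points on the relative boundary, each of which lies in a proper face; proper faces are lower-dimensional pointed polyhedral cones, and an extreme ray of a face of $M$ is an extreme ray of $M$, so the inductive hypothesis finishes it. There the only mild difficulty is the bookkeeping with faces.
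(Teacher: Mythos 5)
Your proof is correct, and it necessarily differs from the paper's because the paper gives no argument at all: it simply cites \cite[Corollary~8.7]{borger2024facets} for a ``proof using semiring theory.'' Your route is the classical convex-geometry one: use pointedness to build a functional $\ell$ strictly positive on $M\setminus\{0\}$, slice to get a bounded polyhedron $P = M\cap\ell^{-1}(1)$, invoke Minkowski--Weyl to write $P$ as the convex hull of its vertices, lift back to $M = \mathbb{R}_{\geq 0}\cdot P$, and finally check each $\mathbb{R}_{\geq 0} v_j$ is an extreme ray in the module-theoretic sense used by the paper (namely $x+y\in\rho$ with $x,y\in M$ forces $x,y\in\rho$). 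All the steps check out: the equivalence of ``no line'' with $\bigcap_i\ker a_i=\{0\}$, the boundedness of $P$ via recession directions, and the last verification that vertices of $P$ give extremal rays of the cone (cleanly split into $\lambda=0$, using pointedness, and $\lambda>0$, using the vertex property). What the classical route buys is self-containedness at the cost of Minkowski--Weyl as input; what the paper's cited route buys is a uniform semiring-theoretic framework that presumably dovetails with the rest of \cite{borger2024facets}. Your sketched inductive alternative avoiding Minkowski--Weyl is also sound in outline, though, as you say, it trades the polytope theorem for face bookkeeping.
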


As a corollary, we obtain that such cones are quasi-free $\mathbb{R}_{\geq 0}$-modules.
\begin{cor}Let $V$ be a real vector space, and let $M\subseteq V$ be a polyhedral cone which does not contain a line.  Then $M$ is a finitely generated quasi-free $\mathbb{R}_{\geq 0}$-module and a quasi-basis consists of exactly one element from each extreme ray.
\end{cor}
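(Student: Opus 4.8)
The plan is to deduce everything from Lemma~\ref{lemma: modules generated by extreme rays are quasi-free}, applied with $K=\mathbb{R}_{\geq 0}$, which is a zero-sum-free semifield without zero-divisors. First I would note that since $M$ is a polyhedral cone containing no line, it is pointed and, by Minkowski--Weyl, finitely generated as a cone, say $M=\mathrm{cone}(v_1,\dots,v_k)$. Consequently $M$ has only finitely many extreme rays: if $\rho$ is an extreme ray and $0\neq w\in\rho$, writing $w=\sum_i t_iv_i$ with $t_i\geq 0$ and grouping the terms with $t_i>0$, the extremality of $\rho$ forces each such $v_i$ into $\rho$, so $\rho=\mathbb{R}_{\geq 0}v_i$ for some $i$. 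Thus there are at most $k$ extreme rays; choosing one nonzero vector $x_i$ from each yields a \emph{finite} set, which is a generating set for $M$ as an $\mathbb{R}_{\geq 0}$-module by Lemma~\ref{lemma: conic Krein-Milman}. This already establishes finite generation.

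Next I would verify the two additional hypotheses of Lemma~\ref{lemma: modules generated by extreme rays are quasi-free}. The submodule $\mathbb{R}_{\geq 0}x_i$ is exactly the $i$th extreme ray, so by the defining property of an extreme ray used in Lemma~\ref{lemma: conic Krein-Milman}, $x+y\in\mathbb{R}_{\geq 0}x_i$ with $x,y\in M$ forces $x,y\in\mathbb{R}_{\geq 0}x_i$; that is, $\mathbb{R}_{\geq 0}x_i$ is extremal in $M$. For the cancellation hypothesis, if $ax=bx$ with $a,b\in\mathbb{R}_{\geq 0}$ and $0\neq x\in M\subseteq V$, then $(a-b)x=0$ holds in the real vector space $V$, and since $x\neq 0$ this gives $a=b$.

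Finally I would check that the generating set $\{x_i\}$ is minimal, so that part (2) of Lemma~\ref{lemma: modules generated by extreme rays are quasi-free} upgrades it to a quasi-basis. If some $x_j$ lay in the submodule generated by the remaining $x_i$, write $x_j=\sum_{i\neq j}c_ix_i$ with $c_i\geq 0$. Splitting the right-hand side as a sum of two elements of $M$ and applying extremality of $\mathbb{R}_{\geq 0}x_j$ repeatedly, one concludes $x_j$ is a nonnegative multiple of a single $x_i$ with $i\neq j$, contradicting that $x_i$ and $x_j$ span distinct extreme rays. Hence the generating set is minimal, and Lemma~\ref{lemma: modules generated by extreme rays are quasi-free} yields that $M$ is a finitely generated quasi-free $\mathbb{R}_{\geq 0}$-module with a quasi-basis consisting of exactly one element from each extreme ray.

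I do not anticipate a serious obstacle: the only genuinely nontrivial input is the classical structure theory of polyhedral cones (finiteness of the extreme rays and generation by them, the latter already isolated as Lemma~\ref{lemma: conic Krein-Milman}), and the rest is a routine verification of the hypotheses of the preceding lemma; the mild point requiring a line or two of care is the minimality argument.
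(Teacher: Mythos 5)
Your proof is correct and follows the same route as the paper's: both invoke Lemma~\ref{lemma: conic Krein-Milman} to get that the extreme rays generate, verify minimality of the resulting generating set via the extremality of the rays, and conclude from Lemma~\ref{lemma: modules generated by extreme rays are quasi-free}. You spell out a few points the paper leaves implicit — finiteness of the set of extreme rays and the cancellation hypothesis in part~(2) of that lemma — but the argument is the same.
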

\begin{proof}
Pick a set $x_1,\ldots, x_n$ consisting of one nonzero element from each extreme ray.  By Lemma \ref{lemma: conic Krein-Milman}, $x_1,\ldots, x_n$ generate $M$.  If $x_k = \sum_{i\neq k} c_ix_i$, then some term is nonzero, and by extremality for this term we have $ax_k = c_i x_i$ for some $a \neq 0$. This contradicts the fact that the generating set contains only one element from each ray.  Thus $x_1,\ldots, x_n$ is a minimal generating set.  The result follows from Lemma \ref{lemma: modules generated by extreme rays are quasi-free}.
\end{proof}

We now provide a simple example showing how knowing that a module is quasi-free or weakly free helps us classify the automorphisms or linear $G$-actions on the module, which is our original motivation. 

\begin{myeg}
Let $M$ be the $\T$-module generated by three elements $x_1, x_2, x_3$ and the relation
\[
x_1 + x_2 = x_1 + x_3.
\]
Since both sides of the relation have at least two terms, $M$ is quasi-free, and hence any automorphism can only rescale or permute the quasi-basis $x_1, x_2, x_3$.  

If we quotient $M$ by the relation $a x_2 = 0$ for some $a\neq 0$, then we obtain the module generated by $x_1, x_3$ with $x_1 = x_1 + x_3$.  It is easy to check that this is not the free module on one-generator.  On the other hand, if we quotient $M$ by the relation $x_1 = 0$, we obtain the module generated by $x_2, x_3$ with relation $x_2 = x_3$, which is free on one generator.  So no automorphism can map $x_1$ to a multiple of $x_2$, and similarly no automorphism can map $x_1$ to a multiple of $x_3$.  Hence any automorphism maps $x_1$ to a multiple of $x_1$.

Now $x_2$ then cannot be mapped to a multiple of $x_1$ so either it is mapped to a multiple of $x_2$ (in which case $x_3$ is mapped to a multiple of $x_3$) or it is mapped to a multiple of $x_3$ (so $x_3$ is mapped to a multiple of $x_2$).

Consider again the quotient by the relation $x_1 = 0$, which we now know to be invariant under automorphism. Any automorphism of $M$ descends to an automorphism of the module generated by two elements $x_2, x_3$ with relation $x_2 = x_3$.  In particular, if $x_2, x_3 \mapsto bx_2, cx_3$ or if $x_2, x_3\mapsto bx_3, cx_2$, we obtain $b = c$.

So any automorphism either has the form
\[
x_1, x_2, x_3 \mapsto ax_1,~bx_2,~bx_3
\]
or
\[
x_1, x_2, x_3 \mapsto ax_1,~bx_3,~bx_2
\]
for some $a, b\in \mathbb{T}^\times$.  It is easy to check that anything of this form is indeed an automorphism.

If we wish to study linear actions of a finite group on $M$, then only the torsion automorphisms are of interest.  In this case, it is clear that $a = b = 1$.  So linear actions of a finite group $G$ on $M$ are equivalent to homomorphisms $\chi: G\rightarrow \mathbb{Z} / 2\mathbb{Z}$ where $g\in G$ acts as either the identity or by swapping $x_2$ and $x_3$.
\end{myeg}

To finish the discussion on modules, we note that quasi-free projective modules are free. The proof is based on \cite[Theorem 8.8]{borger2024facets}.

\begin{pro}\label{proposition: free, quasi-free}
Let $M$ be a module over a semiring $K$. Then, $M$ is quasi-free and projective if and only if $M$ is free. 
\end{pro}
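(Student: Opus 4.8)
The plan is to treat the two implications separately, with essentially all of the work lying in the direction ``quasi-free and projective $\Rightarrow$ free.'' First I would dispose of the easy direction. A free module with basis $x_1,\dots,x_n$ is projective (given a surjection $f\colon A\to B$ and a homomorphism $g\colon M\to B$, pick a preimage $a_i\in A$ of each $g(x_i)$ and extend linearly; this works over any semiring, since surjections of modules are epimorphisms), and it is quasi-free because a relation $x_i=\sum_j c_jx_j$ in a free module forces $c_j=\delta_{ij}$ by uniqueness of coordinates, so the given basis is already a quasi-basis.

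For the main direction, fix a quasi-basis $x_1,\dots,x_n$ of $M$ and consider the $K$-linear map $\pi\colon K^n\to M$ sending the standard basis $e_1,\dots,e_n$ to $x_1,\dots,x_n$. Since the quasi-basis generates $M$, the map $\pi$ is surjective, hence an epimorphism of $K$-modules, so projectivity of $M$ yields a $K$-linear section $s\colon M\to K^n$ with $\pi\circ s=\mathrm{id}_M$. The key observation, and the only place where quasi-freeness (as opposed to mere finite generation) is used, is that this section is forced to be the obvious one: writing $s(x_j)=\sum_i c_{ij}e_i$ with coefficients $c_{ij}\in K$, uniquely determined since $K^n$ is free, and applying $\pi$ gives $x_j=\pi(s(x_j))=\sum_i c_{ij}x_i$ for each $j$; quasi-independence of $x_1,\dots,x_n$ then forces $c_{ij}=\delta_{ij}$, i.e.\ $s(x_j)=e_j$. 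Hence $s\circ\pi$ fixes every $e_j$, so $s\circ\pi=\mathrm{id}_{K^n}$, and together with $\pi\circ s=\mathrm{id}_M$ this exhibits $\pi$ as an isomorphism $K^n\to M$; thus $M$ is free.

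I do not anticipate a genuine obstacle here: the argument amounts to the remark that the idempotent endomorphism $s\circ\pi$ of $K^n$, whose image carries a quasi-basis pulled back from the standard basis, must be the identity. The only points requiring a little care are that ``projective'' must be interpreted relative to surjections of $K$-modules, so that the section $s$ actually exists — which is automatic, since surjections are epimorphisms in the category of $K$-modules — and that $\pi$ is surjective, which is immediate from the quasi-basis being a generating set. Everything else is the short computation above; this is the argument of \cite[Theorem 8.8]{borger2024facets} adapted to the present setting.
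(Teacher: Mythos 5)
Your argument is correct and is essentially identical to the paper's proof: fix a quasi-basis, take the surjection $K^n \to M$ determined by it, use projectivity to obtain a section, expand the section in coordinates, and apply quasi-independence to force the section to hit the standard basis, concluding that the surjection is an isomorphism. The paper even cites the same source (\cite[Theorem 8.8]{borger2024facets}) for the idea; the only differences are cosmetic (index conventions and the extra detail you supply for the easy direction).
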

\begin{proof}
We note that any a free module over a semiring does not have to be weakly free (Remark \ref{rmk: free weakly}), but it is quasi-free. Any free module is projective. So, one direction is clear. For the converse, fix a quasi-basis $x_1,\ldots, x_n$\footnote{Despite the notation, close inspection will reveal this works for modules with an infinite quasi-independent generating set, rather than just the finitely generated case.}.  Let $p: K^n \rightarrow M$ map the standard basis to $x_1,\ldots, x_n$.  This is surjective because $x_1,\ldots, x_n$ is a generating set.  By projectivity there exists a section $s: M \rightarrow K^n$.  Define $a_{ij}$ by $s(x_i) = \sum a_{ij} e_j$.  Then applying $p$ to both sides gives
\begin{equation}
x_i = p(s(x_i)) = \sum a_{ij} x_j.
\end{equation}
By quasi-independence, $a_{ij} = \delta_{ij}$.  Thus $s(x_i) = e_i$.  Now $s(p(e_i)) = s(x_i) = e_i$ so $s \circ p = \mathrm{id}$. Moreover, by definition of $s$, we know $p \circ s = \mathrm{id}$.  Thus $p: K^n\rightarrow M$ is an isomorphism.
\end{proof}

The following example shows that the class of quasi-free modules is strictly bigger than the class of free modules. 

\begin{myeg}\label{example: quasi-free but not free}
Consider the lattice of flats of the uniform matroid $U_{1,3}$.
\[
L:=\begin{tikzcd}[every arrow/.append style={dash}]
	& 1  \arrow[d] \arrow[dl] \arrow[dr] &\\
	x_1 \arrow[dr] & x_2 \arrow[d] & x_3 \arrow[dl]\\
	&0 & 
\end{tikzcd}
\]
The corresponding $\mathbb{B}$-module is the following:
\[
M=(\bigoplus_{i=1}^3 \mathbb{B}x_i) / \angles{x_1+x_2 \sim x_1+x_3 \sim x_2+x_3}
\]

Since $L$ is an atomic lattice, $M$ is quasi-free by Lemma \ref{lemma: atomic}. However, $M$ is not projective (and hence not free). See \cite[Section 7.6]{borger2024facets}.
\end{myeg}

\section{Subgroups of $GL_n$}\label{section: subgroups of $GL_n$}
 
We have a solid understanding of linear actions on free modules, making it natural to extend this study to linear actions on non-free modules.

This requires understanding the automorphism groups of modules. In many cases (e.g. quasi-free modules), the relevant automorphism group comes with a natural embedding into $GL_n$.  In addition, the automorphism group of a tropical linear space (more generally, $K$-linear space) is a subgroup of $GL_n$ by definition. 

We may characterize purely torsion subgroups of $\text{GL}_n(K)$ using our classification of representations of finite groups.  For this, the following lemma will be helpful.

\begin{lem}\label{lemma: invariant basis}
Let $V$ be a free module over an idempotent semifield $K$ and let $G$ be a finite group. Fix a linear $G$-action on $V$. Then $V$ has a basis which is {invariant} under this action.
\end{lem}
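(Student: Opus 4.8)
The plan is to combine the structure of $\mathrm{GL}_n(K)$ with the fact that $K^\times$ is torsion-free, and then to build the invariant basis one $G$-orbit of basis lines at a time by transporting a single representative vector around its orbit.

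First I would record the two structural inputs. By Proposition \ref{proposition: exact sequence R, GL, S_n} applied to $K$ (equivalently, by \eqref{eq: GL}), every element of $\mathrm{GL}_n(K)$ is a monomial matrix, so after fixing a basis $e_1,\dots,e_n$ of $V$ each automorphism $\rho(g)$ sends the line $L_i := Ke_i$ to a line $L_{\sigma_g(i)}$ and rescales by a unit; thus $G$ acts on the finite set $\{L_1,\dots,L_n\}$ of basis lines by permutations. Second, $K^\times$ is a torsion-free group: assuming $K$ nontrivial, if $x\in K^\times$ had order $m$, then $y := 1 + x + \dots + x^{m-1}$ would satisfy $xy = y$, and since $1+z\neq 0$ for every $z\in K$ in a nontrivial idempotent semifield we get $y\neq 0$, hence $x = 1$. (If $K$ is trivial then $V = 0$ and the statement is vacuous.) Consequently any homomorphism from a finite group into $K^\times$ is trivial.

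Next I would fix an orbit $O$ of $G$ on the basis lines, choose a line $L_i\in O$, and consider $H := \mathrm{Stab}_G(L_i)$. Since each $\rho(h)$ with $h\in H$ preserves $L_i$, it acts there by a character $\chi: H\to K^\times$, $\rho(h)(e_i) = \chi(h)e_i$; as $H$ is finite, $\chi$ is trivial, i.e. $\rho(h)(e_i) = e_i$ for all $h\in H$. Now for each line $L_j\in O$ pick any $g\in G$ with $\rho(g)(L_i) = L_j$ and set $v_j := \rho(g)(e_i)$. This is independent of the choice of $g$: two such elements differ by an element of $H$, which fixes $e_i$. Carrying this out for each orbit produces vectors $v_1,\dots,v_n$, with $v_j$ a nonzero multiple of $e_j$; since each $v_j$ spans $L_j$, the set $\{v_1,\dots,v_n\}$ is again a basis of $V$.

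Finally I would verify $G$-invariance directly. Given $g\in G$ and $v_j$, write $L_j$ in the orbit of the chosen $L_i$ with $v_j = \rho(g')(e_i)$ and $\rho(g')(L_i) = L_j$. Then $\rho(g)(v_j) = \rho(gg')(e_i)$, and because $\rho(gg')(L_i) = \rho(g)(L_j)$, the right-hand side is by construction the vector attached to the line $\rho(g)(L_j)$. Hence $\rho(g)$ permutes $\{v_1,\dots,v_n\}$, realizing exactly the permutation action of $G$ on the lines, which proves the lemma. The one step I would single out as the crux is the well-definedness of $v_j$ in the previous paragraph: this is precisely where the torsion-freeness of $K^\times$ enters (through triviality of the stabilizer characters), and everything else is bookkeeping. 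One could alternatively package the same argument as the observation that, once the stabilizer character is seen to be trivial, the $G$-submodule spanned by an orbit is $G$-equivariantly the permutation module $K[G/H]$, which manifestly has an invariant basis.
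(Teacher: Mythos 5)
Your proof is correct, and it takes a more self-contained route than the paper's. The paper's proof is quite short and outsources the essential content to an external classification result (\cite[Proposition 3.15]{JMTmatroidsPart1}), which says that a linear $G$-action on a free module is determined up to isomorphism by the induced $G$-set of basis lines; from this, the paper observes that the permutation module $K[X]$ on the set $X$ of basis lines has the same $G$-set of lines, hence is isomorphic to $V$, and visibly has an invariant basis. Your argument instead proves the needed fact directly: you identify that the obstruction lives in the stabilizer characters $H \to K^\times$, show $K^\times$ is torsion-free (the calculation $xy=y$ with $y = 1+x+\cdots+x^{m-1} \neq 0$ in a zero-sum-free semifield is the right one), conclude the stabilizer acts trivially on a chosen representative $e_i$ of its line, and then transport $e_i$ around the orbit. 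The well-definedness check (two transporting elements differ by a stabilizer element, which fixes $e_i$) and the $G$-invariance check are both carried out correctly, and the resulting vectors are units times the original $e_j$, hence still a basis. In effect you have inlined the proof of the cited classification result (or at least the part of it relevant here). What the paper's route buys is brevity and modularity; what yours buys is that the lemma becomes self-contained within this paper, and it makes the actual mechanism — triviality of finite-order characters into $K^\times$ — explicit rather than hidden inside a reference.
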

\begin{proof}
Let $V=K^n$ and $\{e_i\}$ be the standard basis vectors. 
$G$ acts on the set of basis lines $X = \{\mathrm{span}(e_i)\mid i\in\{1,\ldots,n\}\}$.  By \cite[Proposition 3.15]{JMTmatroidsPart1}, $V$ is determined up to isomorphism by the $G$-set $X$.  We may obtain another representation $W$ whose corresponding $G$-set is $X$ simply by considering the free module with basis $X$.  So $V\cong W$ as modules, and we may prove the result for $W$ instead.  But $X$ is {invariant} under the $G$-action, which implies the result.
\end{proof}

\begin{pro}
Let $K$ be an idempotent semifield.  Let $G\subseteq \text{GL}_n(K)$ be a purely torsion subgroup.  Then $G$ is conjugate to a subgroup of $S_n$.\footnote{We view $S_n$ as a subgroup of $\text{GL}_n(K)$.} In other words, there exist a subgroup $H\subseteq S_n$ and $M\in \text{GL}_n(K)$ such that $G = \{MPM^{-1} \mid P\in H \}$.  
Furthermore, the matrix $M$ can be chosen to be diagonal. 
\end{pro}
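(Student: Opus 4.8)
The plan is to combine the structure theorem \eqref{eq: exact sequence R,GL, S_n} for $\text{GL}_n(K)$ with Lemma \ref{lemma: invariant basis}. The key observation is that a purely torsion subgroup $G$ of $\text{GL}_n(K)$ is precisely the image of a faithful linear $G$-action on the free module $K^n$: indeed, each element of $G$ has finite order, so $G$ defines a genuine (finite-image) linear representation of itself on $K^n$, and conversely the image of any such representation consists of finite-order matrices. So the abstract group $G$ acts linearly on $V = K^n$, and I may apply Lemma \ref{lemma: invariant basis} to obtain a basis $v_1,\dots,v_n$ of $K^n$ invariant under the $G$-action. Here "invariant" means $G$ permutes the set of basis lines $\{\mathrm{span}(v_i)\}$; a priori a group element could rescale as well as permute.

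First I would fix the change-of-basis matrix $M \in \text{GL}_n(K)$ with $M e_i = v_i$, where $\{e_i\}$ is the standard basis. Then conjugation by $M^{-1}$ replaces $G$ by a subgroup $G' = M^{-1} G M$ acting on $K^n$ so that $G'$ preserves the \emph{standard} basis lines. Each element $A \in G'$ therefore factors as $A = D_A P_A$ where $P_A$ is a permutation matrix and $D_A$ an invertible diagonal matrix — this is just the decomposition underlying \eqref{eq: exact sequence R,GL, S_n}. Since $A$ has finite order, $D_A$ must have finite order in the diagonal torus $(K^\times)^n$; but $K$ is an idempotent semifield, so its unit group $K^\times$ is a totally ordered abelian group, which is torsion-free. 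Hence $D_A$ is the identity and $A = P_A \in S_n$. Thus $G'$ is an honest subgroup $H \subseteq S_n$, and $G = M H M^{-1}$ as claimed.

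For the final sentence — that $M$ may be taken diagonal — I would revisit Lemma \ref{lemma: invariant basis} more carefully, or equivalently argue directly. The permutation part of $M$ can be absorbed: write $M = D_M P_M$ via \eqref{eq: exact sequence R,GL, S_n}; then $M H M^{-1} = D_M (P_M H P_M^{-1}) D_M^{-1}$, and $P_M H P_M^{-1}$ is still a subgroup of $S_n$, so replacing $H$ by this conjugate we may assume $M = D_M$ is diagonal. The main obstacle, such as it is, is being precise about the passage between "purely torsion subgroup of $\text{GL}_n(K)$" and "faithful finite-group action on $K^n$" — one should note that finiteness of $G$ itself is not assumed, only that every element is torsion, so Lemma \ref{lemma: invariant basis} (stated for finite $G$) is applied to each finitely generated subgroup, or one observes that a purely torsion subgroup of $S_n \ltimes (K^\times)^n$ with $K^\times$ torsion-free already injects into $S_n$ after the conjugation step, so finiteness is automatic once the diagonal parts are killed. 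I would phrase the argument so that Lemma \ref{lemma: invariant basis} is invoked only after knowing $G$ is finite, or reorganize to kill the torus part first and deduce finiteness. The torsion-freeness of $K^\times$ for idempotent $K$ is the one genuinely essential input beyond the cited results.
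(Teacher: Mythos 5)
There is a genuine gap. You read Lemma \ref{lemma: invariant basis} as producing only a $G$-invariant set of basis \emph{lines} and then try to kill the diagonal parts by appealing to torsion-freeness of $K^\times$; neither half holds up. The lemma actually yields the stronger conclusion that $G$ permutes a basis $\{v_i\}$ as a \emph{set of vectors}, with no scalar factors --- this is exactly what its proof constructs, since on the replacement module $W$ the group literally permutes the basis $X$. Mere invariance of the set of basis lines is automatic for \emph{every} subgroup of $\text{GL}_n(K)$: any invertible matrix already permutes and rescales the standard basis lines. So the conjugation you perform, designed only to make $G'$ ``preserve the standard basis lines,'' does no work at all.

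More seriously, the step ``$A$ has finite order $\Rightarrow D_A$ has finite order'' is false: the order of $A=D_AP_A$ in the semidirect product is not controlled by the orders of the two factors. For any unit $\lambda\neq 1_K$, the matrix
\[
A=\begin{pmatrix}0_K & \lambda\\ \lambda^{-1} & 0_K\end{pmatrix}\in\text{GL}_2(K),\qquad D_A=\operatorname{diag}(\lambda,\lambda^{-1}),\qquad P_A=\begin{pmatrix}0_K & 1_K\\ 1_K & 0_K\end{pmatrix},
\]
satisfies $A^2=I$, yet $D_A$ has infinite order in $(K^\times)^2$ precisely because $K^\times$ is torsion-free. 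So you cannot conclude $D_A=I$ from $A$ being torsion, and your argument stalls here. The paper's proof uses the set-invariance directly: since bases of $K^n$ agree with the standard basis up to permutation and rescaling, the invariant basis may be written as $\lambda_ie_i$ after reordering, and then $g(\lambda_ie_i)$ must equal $\lambda_{\phi(g)(i)}e_{\phi(g)(i)}$ with no leftover scalar; this rearranges to $g=\Lambda\,\phi(g)\,\Lambda^{-1}$ with $\Lambda=\operatorname{diag}(\lambda_1,\dots,\lambda_n)$, producing a diagonal conjugating matrix in one step. Your closing move to absorb the permutation part of $M$, and your remark that finiteness of $G$ should be established before invoking the lemma (resolved as you suggest, since $\pi|_G:G\to S_n$ is injective because $G\cap(K^\times)^n$ is a torsion subgroup of a torsion-free group), are both correct; the gap is the middle, where you need set-invariance of the basis rather than line-invariance plus a torsion argument.
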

\begin{proof}
The inclusion $G\subseteq GL_n(K)$ yields a linear $G$-action on $K^n$; let $V=K^n$ denote this representation, which is clearly faithful.  By Lemma \ref{lemma: invariant basis}, there is some set of basis vectors $x_1,\ldots, x_n$ which is {invariant} under the $G$-action (and note that the invariance does not depend on the ordering of the basis).  By the uniqueness property of bases, $x_1,\ldots, x_n$ agree with $e_1,\ldots,e_n$ up to permutation and rescaling, and without loss of generality we may reorder $x_1,\ldots,x_n$ so that no permutation is needed.  Thus there exist $\lambda_i \in K$ for $1 \leq i \leq n$ such that $\lambda_1e_1,\ldots,\lambda_ne_n$ is invariant under the $G$-action.

Let $\phi: G \rightarrow S_n$ be the restriction of the surjective map $\text{GL}_n(K) \to S_n$ in Proposition \ref{proposition: exact sequence R, GL, S_n}.  Then, for $g \in G$, $g(\lambda_i e_i)$ is a scalar multiple of $e_{\phi(g)(i)}$ and hence
\begin{equation}
g(\lambda_i e_i) = \lambda_{\phi(g)(i)}e_{\phi(g)(i)}.
\end{equation}
Identify $S_n$ with the set of permutation matrices so that we have $\phi(g)_{ji}=\delta_{j,\phi(g)(i)}$. Let $\Lambda$ be the diagonal matrix with diagonal entries $\lambda_1,\ldots,\lambda_n$, we may write this as
\begin{equation}
g (e_i) = \sum_{j} \frac{1}{\lambda_i}\phi(g)_{ji} \lambda_{j} e_j,
\end{equation}
which in matrix form is simply $g = \Lambda \phi(g)\Lambda^{-1}$. With $M=\Lambda$ and $H = \mathrm{im} \phi$ the result follows. 
\end{proof}

In order to study more general subgroups of $\text{GL}_n(K)$ where $K$ is a zero-sum-free semifield, it is natural to decompose such a subgroup in a manner analogous to the decomposition of $\text{GL}_n(K)$ into permutations and diagonal matrices.

\begin{lem}\label{lemma: Decomposition of quasi-free automorphisms}
Let $K$ be a zero-sum-free semifield and $G$ be a subgroup of $\text{GL}_n(K)$. Let $\pi: GL_n(K) \rightarrow S_n$ be the canonical homomorphism in Proposition \ref{proposition: exact sequence R, GL, S_n}.  Then the sequence
\begin{equation}
1 \rightarrow G \cap (K^\times)^n \rightarrow G \rightarrow \pi(G) \rightarrow 1
\end{equation}
is exact.
\end{lem}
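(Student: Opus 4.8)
The plan is to recognize the claimed sequence as the restriction to $G$ of the split short exact sequence of Proposition \ref{proposition: exact sequence R, GL, S_n}, and then invoke the standard snake-lemma-free argument for restricting a short exact sequence of groups to a subgroup. Concretely, write $N = (K^\times)^n$ viewed as a normal subgroup of $\text{GL}_n(K)$ via the diagonal embedding $f$, so that $\pi$ has kernel $N$ and $\text{GL}_n(K)/N \cong S_n$. The three maps in the putative sequence are: the inclusion $G \cap N \hookrightarrow G$, and the corestriction $G \to \pi(G)$ of $\pi$.

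First I would check exactness at $G \cap N$: the inclusion is injective, trivially. Next, exactness at $\pi(G)$: the map $G \to \pi(G)$ is surjective by the definition of $\pi(G)$ as the image. Finally, exactness at $G$: I must show $\ker(\pi|_G) = G \cap N$. But $\ker(\pi|_G) = \{g \in G \mid \pi(g) = e\} = G \cap \ker\pi = G \cap N$, using that $\ker\pi = N = (K^\times)^n$ by Proposition \ref{proposition: exact sequence R, GL, S_n}. This is the entire content; there is essentially no obstacle, since restricting a short exact sequence of groups to a subgroup always yields an exact sequence of the displayed form (what one loses in general is only the splitting and the surjectivity onto the full quotient, neither of which is asserted here).

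I would also note, for completeness, that $G \cap N$ is normal in $G$ because $N$ is normal in $\text{GL}_n(K)$, so the sequence is a genuine short exact sequence of groups and not merely an exact sequence of pointed sets; this is immediate from $g(G\cap N)g^{-1} \subseteq gNg^{-1} \cap gGg^{-1} = N \cap G$ for $g \in G$. If one wants an explicit handle on $G \cap N$, one can identify it via the formula $\pi(A)$ being the unique permutation $\sigma$ with $A_{\sigma(i)i} \neq 0$: an element $A \in G$ lies in $G \cap N$ precisely when this permutation is the identity, i.e. when $A$ is a diagonal matrix in $G$. The only thing to be slightly careful about is that the inclusion $G \cap N \to G$ and the map $G \to \pi(G)$ are the obvious restrictions of $f$ and $\pi$; with that understood the proof is a one-line diagram chase.
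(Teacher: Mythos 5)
Your proof is correct and takes the same approach as the paper, which simply states that the lemma ``follows trivially from Proposition \ref{proposition: exact sequence R, GL, S_n}.'' You have merely spelled out the standard diagram chase (kernel-intersection, restriction, corestriction) that the authors leave implicit; the one detail worth noting is that a zero-sum-free semifield is automatically connected (as remarked in Section \ref{section: preliminaries}), so Proposition \ref{proposition: exact sequence R, GL, S_n} indeed applies.
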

\begin{proof}
This follows trivially from Proposition \ref{proposition: exact sequence R, GL, S_n}.
\end{proof}

The above lemma decomposes a subgroup of $GL_n(K)$ into a subgroup of the torus $(K^\times)^n$ and a subgroup of the permutation group $S_n$.  Thus it is natural to focus on classifying such subgroups.  For the moment, we will focus on studying subgroups of the torus under some additional assumptions.

\begin{mydef}\label{definition: linear subgroup}

Let $K$ be a semifield. A subgroup $G\subseteq \text{GL}_n(K)$ is called a \emph{linear subgroup} of $\text{GL}_n(K)$ if $G$ is the set of matrices in $\text{GL}_n(K)$ such that there is a system of linear equations
\begin{equation}\label{eq: linear subgroup}
\sum_{i,j} A_{ik} x_{ij}= \sum_{i,j} A_{ij} y_{ij}  
\end{equation}
for some scalars $x_{ij}$ and $y_{ij}$, and all $A = (a_{ij})$ and $A^{-1}=(b_{ij})$ in $G$ satisfy the system of linear equations, i.e., 
\[
\sum_{i,j} a_{ij} x_{ij}= \sum_{i,j} a_{ij} y_{ij} \quad \textrm{ and } \quad \sum_{i,j} b_{ij} x_{ij}= \sum_{i,j} b_{ij} y_{ij}
\]

Such linear equations are called a \emph{defining system of equations} of $G$. 
\end{mydef}

Similarly, a subgroup $G\subseteq \text{GL}_n(K)$ is called a \emph{linear subgroup} of $(K^\times)^n$ if $G$ is the the set of $d = (d_1, \dots, d_n)\in (K^\times)^n$ (viewed as invertible diagonal matrices $D$ in $\text{GL}_n(K)$ with diagonal entries $d$) such that there is a system of linear equations $\sum_i d_{i} x_i= \sum_j d_{j} x_j$, and all $D$ and $D^{-1}$ in $G$ satisfy the system of linear equations. Such linear equations are called a \emph{defining system of equations} of $G$.

\begin{rmk}\label{rmk: linear subgroup}
\begin{enumerate}
    \item 
It is clear that if $G$ is a linear subgroup of $\text{GL}_n(K)$, where $K$ is a semifield then $G\cap (K^\times)^n$ is a linear subgroup of $(K^\times)^n$, where we think of $(K^\times)^n$ as the group of invertible diagonal matrices.
\item 
One can easily check that when $K$ is a semifield $\text{GL}_n(K)$ itself is a linear subgroup of itself since with $x_i=e_i$, any $A \in \text{GL}_n(K)$ satisfies \eqref{eq: linear subgroup}.
\end{enumerate}

\end{rmk}

For many interesting subgroups $G\subseteq \text{GL}_n(K)$, it turns out that $G\cap (K^\times)^n$ is the intersection of $(K^\times)^n$ with the solution set of a family of linear equations.

\begin{lem}\label{lemma: Automorphism groups of weakly free modules are linear subgroups}\label{lemma: linear group quasi}
Let $K$ be a zero-sum-free semifield. Let $M$ be a weakly free or quasi-free module of rank $n$ which can be embedded into a (possibly infinite-dimensional) free module.  Then $\mathrm{Aut}(M)$ is a linear subgroup of $GL_n(K)$.  Furthermore, if $M$ is a quotient of the free module spanned by its quasi-basis by finitely many relations and $M$ is contained in a finitely generated free module, then there is a finite defining system of equations for $G$.
\end{lem}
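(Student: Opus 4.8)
The plan is to realize $\mathrm{Aut}(M)$ explicitly inside $GL_n(K)$ as the common stabilizer of a congruence by both a matrix and its inverse, and then to \emph{linearize} that stabilizer condition using the embedding of $M$ into a free module.

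\textbf{Setup and structural observation.} In the quasi-free case, Lemma \ref{lemma: quasi-free implies weakly free} lets me assume $M$ is weakly free with the given quasi-basis as a weak basis, so in all cases I fix a weak basis $x_1,\dots,x_n$ and let $\pi\colon K^n\to M$ send $e_i\mapsto x_i$; write $\sim\;=\ker\pi$ for the resulting congruence on $K^n$, so $M\cong K^n/\sim$. Lemma \ref{lemma: embedding of automorphisms into GL_n} gives the monomorphism $\iota\colon\mathrm{Aut}(M)\hookrightarrow GL_n(K)$ characterized by $\phi\pi=\pi A$. The key claim is
\[
\iota(\mathrm{Aut}(M))=\{A\in GL_n(K)\mid A \text{ and } A^{-1} \text{ both preserve } \sim\},
\]
where "$A$ preserves $\sim$" means $(u,v)\in\sim\Rightarrow(Au,Av)\in\sim$. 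For "$\subseteq$", $\phi\pi=\pi A$ forces $\sim\;\subseteq\ker(\pi A)=\{(u,v)\mid(Au,Av)\in\sim\}$, and the same applied to $\phi^{-1}$ (whose matrix is $A^{-1}$ since $\iota$ is a homomorphism) handles $A^{-1}$. For "$\supseteq$", if $A$ preserves $\sim$ then $\pi A$ factors through $\pi$, giving $\bar A\colon M\to M$ with $\bar A\pi=\pi A$; if $A^{-1}$ also preserves $\sim$, then $\overline{A^{-1}}$ is a two-sided inverse of $\bar A$ because $\pi$ is an epimorphism, and one checks $\iota(\bar A)=A$ from $\bar A(x_i)=\pi(Ae_i)=\sum_j A_{ji}x_j$.

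\textbf{Linearization.} Now use the hypothesis $M\subseteq F$ with $F=K^{(I)}$ free, writing $x_i=(x_{i\ell})_{\ell\in I}$. Then $(u,v)\in\;\sim$ iff $\sum_i u_i x_{i\ell}=\sum_i v_i x_{i\ell}$ for all $\ell\in I$, so for $(a,b)\in\;\sim$ the condition $(Aa,Ab)\in\;\sim$ is equivalent to the family of equations
\[
\sum_{i,j}A_{ij}\,(a_j x_{i\ell})=\sum_{i,j}A_{ij}\,(b_j x_{i\ell}),\qquad \ell\in I,
\]
which is exactly of the form in Definition \ref{definition: linear subgroup} (with scalars $a_j x_{i\ell}$ and $b_j x_{i\ell}$). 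Taking the system consisting of all such equations as $(a,b)$ ranges over $\sim$ and $\ell$ over $I$, an invertible $A$ satisfies the system iff $A$ preserves $\sim$; combined with the structural observation, $\mathrm{Aut}(M)$ is precisely the set of $A\in GL_n(K)$ for which $A$ and $A^{-1}$ satisfy this system, i.e. $\mathrm{Aut}(M)$ is a linear subgroup of $GL_n(K)$.

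\textbf{The finite case.} Finally I reduce to finitely many equations. If $\sim$ is generated as a congruence by pairs $(a^{(1)},b^{(1)}),\dots,(a^{(r)},b^{(r)})$, then $A$ preserves $\sim$ iff $A$ preserves each generator: the relation $\{(u,v)\mid(Au,Av)\in\;\sim\}$ is itself a congruence (closure under addition and scaling is immediate since $A$ acts linearly), so once it contains the generators it contains $\sim$. If moreover $M\subseteq K^N$ with $N$ finite, only finitely many coordinates occur, and the finite system
\[
\sum_{i,j}A_{ij}\,(a^{(k)}_j x_{i\ell})=\sum_{i,j}A_{ij}\,(b^{(k)}_j x_{i\ell}),\qquad 1\le k\le r,\ 1\le\ell\le N,
\]
is a defining system for $\mathrm{Aut}(M)$; the hypothesis that $M$ is a quotient of the free module on its quasi-basis by finitely many relations is exactly what provides the finite generating set of $\sim$. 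I expect the main obstacle to be the careful bookkeeping in the structural observation — pinning down that the image of $\iota$ is the set where \emph{both} $A$ and $A^{-1}$ preserve $\sim$ (not merely $A$), and that the induced endomorphism $\bar A$ has matrix $A$ — together with the verification that preserving a generating set of the congruence forces preserving the whole congruence.
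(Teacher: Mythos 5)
Your proposal is correct and follows essentially the same route as the paper: fix a weak/quasi-basis to embed $\mathrm{Aut}(M)$ into $\mathrm{GL}_n(K)$, characterize the image as the matrices $A$ such that $A$ and $A^{-1}$ preserve the congruence $\ker\pi$, and linearize that condition coordinatewise via the given embedding of $M$ into a free module, with finiteness following from finitely many generating relations and finitely many coordinates. The only (welcome) difference is that you make explicit the reduction to a generating set of the congruence, by observing that $\{(u,v)\mid (Au,Av)\in\sim\}$ is itself a congruence, a point the paper uses implicitly when asserting the finite case.
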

\begin{proof}
Fix a weak basis (resp. quasi-basis) $x_1,\ldots,x_n$ for $M$. Let $f:M \to M$ be an automorphism. Then $f(x_1),\ldots,f(x_n)$ form a weak basis (resp. quasi-basis) of $M$. It follows from Lemma \ref{lemma: quasi-basis permute} or Lemma \ref{lemma: embedding of automorphisms into GL_n} that there exists a unique $A \in \text{GL}_n(K)$ such that
\[
f(x_i)=\sum_j A_{ij}x_j.
\]
One can easily see that $\iota:\text{Aut}(M) \to \text{GL}_n(K)$ sending $f$ to $A$ is an injective group homomorphism.\footnote{Note that an embedding $\iota$ depends on a choice of a weak basis or quasi-basis.} 

If there is no nontrivial relation among $x_1,\dots,x_n$, then $M=K^n$. In particular, $\text{Aut}(M)=\text{GL}_n(K)$, and hence $\text{Aut}(M)$ is linear subgroup (see Remark \ref{rmk: linear subgroup}).

Now, let $A \in \text{GL}_n(K)$. Suppose $x_1,\ldots, x_n$ satisfy a linear relation for some $a_i$, $b_i \in K$
\begin{equation}\label{eq: nontrivial rel}
\sum_i a_i x_i = \sum_i b_i x_i.
\end{equation}
If $f\in \mathrm{Aut}(M)$ corresponds to $A$ under $\iota$, then we have
\begin{equation}
\sum_i a_if(x_i) = \sum_i b_if(x_i).
\end{equation}
Equivalently, we have
\begin{equation}\label{equation: linear relations for automorphisms of weakly free modules}
 \sum_{i,j} a_i A_{ij} x_j = \sum_{i,j} b_i A_{ij} x_j.   
\end{equation}
Now view $M$ as a subset of a free module, so that each side of \eqref{equation: linear relations for automorphisms of weakly free modules} may be viewed as a vector.  Then by focusing on one coordinate of the above vector-valued equation, we get a scalar-valued linear equation that the $A_{ij}$ will satisfy if $A\in \mathrm{Aut}(M)$. In particular, if we let
\[
X:=\{A \in \text{GL}_n(K) \mid \textrm{ $A$ and $A^{-1}$ satisfies \eqref{equation: linear relations for automorphisms of weakly free modules} for each relation \eqref {eq: nontrivial rel}}\},
\]
then $\text{Aut}(M) \subseteq X$ via the injection $\iota$. We only have to prove the other inclusion to show that $\text{Aut}(M)$ is a linear subgroup. 

Conversely, suppose $A \in X$. Since $M$ is the quotient of the free module spanned by $x_1,\ldots,x_n$ by the relations \eqref{eq: nontrivial rel}, and since the variables $\sum_j A_{ij} x_j$ satisfy these relations, there is a homomorphism $\phi: M\rightarrow M$ with $\phi(x_i) = \sum A_{ij} x_j$.  Similarly $A^{-1}$ satisfies these equations if and only if there is a homomorphism $\psi: M\rightarrow M$ with $\psi(x_i) = \sum (A^{-1})_{ij} x_j$.

Therefore, $A$ and $A^{-1}$ both satisfy the system of equations if and only if there exist $\phi, \psi$ as above.   If this holds, then clearly $\phi(\psi(x_i)) = x_i = \psi(\phi(x_i))$ for all $i$. Moreover, since the weak basis (resp. quasi-basis) is a generating set, this implies $\phi$ is invertible.  Furthermore, it is clear that $\phi$ maps to $A\in GL_n(K)$ under our embedding $\iota: \mathrm{Aut}(M)\to GL_n(K)$. Therefore, we have $X \subseteq \text{Aut}(M)$, as desired.  

It is clear from the construction that the system of equations is finite if $M$ is determined by finitely many relations on the weak basis or quasi-basis and if the containing free module is finite-dimensional.
\end{proof}

\begin{lem}\label{lemma: Automorphism groups of tropical spaces are linear subgroups}
Let $K$ be an idempotent semifield. Let $V\subseteq K^n$ be a $K$-linear space (Definition \ref{definition: K-linear space}), and let $\mathrm{Aut}(V)$ denote the automorphisms $\phi$ of $K^n$ such that $\phi$ and $\phi^{-1}$ preserve $V$.  Then $\mathrm{Aut}(V)$ is a linear subgroup of $GL_n(K)$.  The defining system of equations can be taken to be finite if $V$ is finitely generated and is the intersection of finitely many $K$-hyperplanes. 

\end{lem}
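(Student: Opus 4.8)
The plan is to reduce the statement to Lemma~\ref{lemma: Automorphism groups of weakly free modules are linear subgroups} by exhibiting a concrete system of linear equations that cuts out $\mathrm{Aut}(V)$ inside $\mathrm{GL}_n(K)$, using the defining relations of $V$ as a $K$-linear space. First I would unwind Definition~\ref{definition: K-linear space}: $V = \Hom_K(K[x_1,\dots,x_n]/{\sim}, K)$ where $\sim$ is generated by relations $L_i \sim L_j$ with $L_i$ linear. A point $p \in K^n$ lies in $V$ precisely when $L_i(p) = L_j(p)$ for every generating relation. Given $\phi \in \mathrm{GL}_n(K)$ represented by a matrix $A$, the condition that $\phi$ preserves $V$ should be rephrased as: for every generating relation $L_i \sim L_j$, the pulled-back linear forms $L_i \circ \phi$ and $L_j \circ \phi$ agree on all of $V$, equivalently $L_i \circ \phi \sim L_j \circ \phi$ lies in the congruence $\sim$. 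Writing $L_i \circ \phi$ in coordinates produces linear expressions in the entries $A_{k\ell}$ (with coefficients built from the original coefficients of the $L_i$), so each such condition, coordinate by coordinate, is a linear equation in the matrix entries of the form~\eqref{eq: linear subgroup}. Applying the same to $\phi^{-1}$ with matrix $A^{-1}$ gives the second family of equations.

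The key steps, in order, are: (1) fix generating relations $\{L_i \sim L_j\}$ for the congruence defining $V$; (2) observe that $\phi$ preserves $V$ if and only if $\phi^{-1}(V) \supseteq V$ and $\phi(V)\supseteq V$ — but since we also require $\phi^{-1}$ to preserve $V$ it suffices to impose ``$\phi(V)\subseteq V$'' on both $A$ and $A^{-1}$; (3) translate ``$\phi(V)\subseteq V$'' into the statement that for each defining relation, the composite forms $L_i\circ\phi, L_j\circ\phi$ are congruent mod $\sim$, which is a finite list (when $V$ is finitely generated as claimed, we may take finitely many generating relations) of linear conditions on the $A_{k\ell}$ obtained by expanding in the standard basis of $K^n$; (4) conclude that the solution set of these equations intersected with $\mathrm{GL}_n(K)$, together with the analogous equations on $A^{-1}$, is exactly $\mathrm{Aut}(V)$, which is the definition of a linear subgroup. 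For the finiteness claim, note each $K$-hyperplane contributes one relation $\ell_1\sim\ell_2$, hence a bounded number of scalar linear equations, so finitely many $K$-hyperplanes give a finite defining system.

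The main obstacle I anticipate is step~(3): carefully justifying that ``$\phi(V)\subseteq V$'' is equivalent to an honest \emph{linear} system in the entries $A_{k\ell}$ rather than something only semilinear or requiring the congruence-membership test (which a priori is not obviously linear). The cleanest route is to use that $V$ is the common solution set of equations $L_i(x) = L_j(x)$, so $\phi(V) \subseteq V$ says exactly that $L_i(Ax) = L_j(Ax)$ whenever $L_r(x)=L_s(x)$ for all defining $r,s$; since the $L$'s are linear, $L_i(Ax) - L_j(Ax)$ (in the semiring sense, i.e. comparing the two max-plus expressions) is linear in the $A_{k\ell}$ once we also treat the coordinates of a point of $V$ — but here one must be a little careful, as this is a containment of solution sets, not an identity of forms. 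The resolution is to note that $\phi(V)\subseteq V$ is equivalent to the forms $L_i\circ\phi$ and $L_j\circ\phi$ being equal \emph{as elements of the quotient} $K[x]/{\sim}$, i.e. lying in the congruence $\sim$; since $\sim$ is generated in linear degree and $L_i\circ\phi$ is linear, membership reduces to a linear-algebra condition over $K$ on the coefficients, which are linear in $A_{k\ell}$. Once this translation is in hand the rest is bookkeeping, and the finiteness statement follows by counting the contributed equations.
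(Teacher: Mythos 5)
Your approach is genuinely different from the paper's, and it has a real gap at the step you yourself flagged as the main obstacle. The paper does not try to phrase ``$\phi$ preserves $V$'' as a congruence-membership condition on the pulled-back forms $L_i\circ\phi$. Instead it works pointwise: for each defining relation $\sum_i\alpha_i x_i = \sum_i\beta_i x_i$ of $V$ \emph{and each element $v$ of a chosen generating set of $V$ as a $K$-module}, it imposes
\[
\sum_{i,j}\alpha_i A_{ij}\,v_j \;=\; \sum_{i,j}\beta_i A_{ij}\,v_j,
\]
together with the analogous equations on $A^{-1}$. These are literally linear equations in the entries $A_{ij}$ with scalar coefficients $\alpha_i v_j$, $\beta_i v_j$, so they are exactly of the form required by Definition~\ref{definition: linear subgroup}. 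The key observation making this finite is that it suffices to impose the conditions only on a (finite) generating set of $V$: if the linear map $A$ sends each generator into $V$, then it sends all of $V$ into $V$.

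Your route breaks down at the ``resolution'' in step (3). You claim that $\phi(V)\subseteq V$ is \emph{equivalent} to $L_i\circ\phi \sim L_j\circ\phi$ in the congruence. But $\phi(V)\subseteq V$ only says that $L_i\circ\phi$ and $L_j\circ\phi$ agree as functions on $V=\Hom_K(K[x]/\!\sim,K)$, i.e.\ that they have equal image under every evaluation homomorphism. That is a strictly weaker, semantic condition; it does not force the two linear forms to be identified in the syntactic quotient $K[x]/\!\sim$ unless the congruence happens to be saturated with respect to its $K$-points. So the set cut out by your conditions could be a proper subset of $\mathrm{Aut}(V)$. Moreover, even granting the equivalence, the further claim that congruence membership ``reduces to a linear-algebra condition over $K$'' that is ``linear in $A_{k\ell}$'' is not substantiated: over a semiring there is no subtraction, a congruence generated by finitely many relations is built up by an unbounded transitive/compatible closure, and membership is not in general expressible as a system of equations of the specific shape $\sum_{i,j}A_{ij}x_{ij}=\sum_{i,j}A_{ij}y_{ij}$ demanded by Definition~\ref{definition: linear subgroup}. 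The paper's evaluation-at-generators argument sidesteps both difficulties at once, and also makes the finiteness claim immediate (finitely many relations times finitely many generators).
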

\begin{proof}
By definition, $V$ is the set of solutions in $K^n$ of a family of linear relations $\mathcal{B}$.  An automorphism of $V$ is an element $A\in GL_n(K)$ such that $A$ and $A^{-1}$ preserve $V\subseteq K^n$. Equivalently, for each linear relation in $\mathcal{B}$, 
\begin{equation}\label{eq:bend}
\sum \alpha_i x_i = \sum \beta_i x_i
\end{equation}
and for each $x\in V$, the linear relation must also be satisfied by $Ax$ and $A^{-1}x$.  That is
\begin{equation}\label{eq: tropical linear equation for Ax}
\sum_{i, j} \alpha_i A_{ij} x_j = \sum \beta_i A_{ij} x_j
\end{equation}
and 
\begin{equation}
\sum_{i, j} \alpha_i (A^{-1})_{ij} x_j = \sum \beta_i (A^{-1})_{ij} x_j.
\end{equation}
As in Lemma \ref{lemma: linear group quasi}, one can easily see that this shows $\text{Aut}(V)$ is a linear subgroup.
Furthermore, we remark that we only need to impose these equations for $x$ in a set of generators rather than for all $x\in V$, since if $A: K^n \rightarrow K^n$ maps the generators into $V$ then it maps all of $V$ into $V$.

Finally, it is clear from construction that the system of equations is finite if $V$ is finitely generated and is the intersection of finitely many $K$-hyperplanes.
\end{proof}

Our next task will be to classify linear subgroups of $(K^\times)^n$ in the case $K = \T$.  Note that we may make the identification $(K^\times)^n = \mathbb{R}^n$.  We will make use of the following lemma to reduce to the case where there is a vector in the linear subgroup whose components are all distinct.

\begin{lem}\label{lemma: index lemma}
Let $A$ be a torsion-free abelian group.  Let $B\subseteq A^n$ be a subgroup.  
\begin{enumerate}
    \item Suppose that for every $b\in B$ there is some index $i$ such that $b_i = 0$.  Then there is some $i$ such that $b_i = 0$ for all $b\in B$; that is, the choice of $i$ doesn't depend on $b$.
    \item Suppose that for every $b\in B$ there are indices $i, j$ such that $b_i = b_j$.  Then there exist $i, j$ such that $b_i = b_j$ for all $b\in B$.

\end{enumerate}
\end{lem}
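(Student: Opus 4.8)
The plan is to reduce both parts to the classical fact that a vector space over an infinite field is not the union of finitely many proper subspaces.

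For part (1): since $A$ is torsion-free, the canonical map $A\to V:=A\otimes_{\mathbb{Z}}\mathbb{Q}$ is injective, so I would regard $B$ as a subgroup of the $\mathbb{Q}$-vector space $V^n$. Let $W\subseteq V^n$ be the $\mathbb{Q}$-linear span of $B$. The key observation is that every element of $W$ has the form $\tfrac1m b$ for some $b\in B$ and some positive integer $m$ — one simply clears denominators in a $\mathbb{Q}$-linear combination of elements of $B$, using that $B$ is a subgroup. Writing $H_i:=\{v\in V^n\mid v_i=0\}$ for the $i$-th coordinate subspace, the hypothesis says $B\subseteq\bigcup_{i=1}^n H_i$; since each $H_i$ is closed under scalar multiplication, the key observation upgrades this to $W\subseteq\bigcup_{i=1}^n(W\cap H_i)$. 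Now $W$ is a $\mathbb{Q}$-vector space written as a finite union of subspaces, and $\mathbb{Q}$ is infinite, so one of the $W\cap H_i$ must be all of $W$; that is, $W\subseteq H_i$ for some $i$, and in particular $b_i=0$ for every $b\in B$.

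Part (2) then follows by applying part (1) to the image of $B$ under the group homomorphism $\phi\colon A^n\to A^{\binom n2}$ defined by $\phi(b)=(b_i-b_j)_{1\le i<j\le n}$. The target is again torsion-free, and the hypothesis of (2) says exactly that every element of $\phi(B)$ has a vanishing coordinate; part (1) then yields a pair $(i,j)$ with $b_i-b_j=0$ for all $b\in B$.

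I expect the only real obstacle to be identifying the right reduction in part (1). Note that $B$ itself may genuinely be a union of finitely many proper subgroups (e.g.\ the preimage in $\mathbb{Z}^2$ of the three proper subgroups of $\mathbb{Z}^2/2\mathbb{Z}^2$), so one cannot argue purely group-theoretically; it is essential that the covering subgroups $B\cap H_i$ are cut out by coordinate conditions, which is precisely what lets one pass to the $\mathbb{Q}$-span and invoke the infinite-field statement. For completeness I would recall the one-line proof of that statement: if $W=\bigcup_{k=1}^r W_k$ is a minimal such cover with $r\ge 2$, choose $v\in W_1$ lying in no other $W_k$ and $w\notin W_1$; among the infinitely many vectors $w+\lambda v$ ($\lambda\in\mathbb{Q}$) none lies in $W_1$, so two of them lie in a common $W_k$ with $k\ge 2$, forcing $v\in W_k$, a contradiction.
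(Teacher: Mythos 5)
Your proof is correct, and it takes a genuinely different route from the paper's. Both arguments begin by embedding $A$ into $V := A\otimes_{\mathbb{Z}}\mathbb{Q}$, but the paper then proves the contrapositive of part (2) directly by a maximality argument: choose $v\in B$ with the largest number of distinct coordinate pairs, and if some pair is still equal, pick $w\in B$ separating that pair and show one can choose $(\alpha,\beta)\in\mathbb{Q}^2$ avoiding finitely many lines so that $\alpha v+\beta w$ strictly increases the count; clearing denominators puts it back in $B$, contradicting maximality. Your argument instead passes to the $\mathbb{Q}$-span $W$ of $B$, uses the observation that every element of $W$ is $\tfrac1m b$ for some $b\in B$ to upgrade $B\subseteq\bigcup_i H_i$ to $W=\bigcup_i(W\cap H_i)$, and then invokes the general fact that a vector space over an infinite field is not a finite union of proper subspaces; you then obtain part (2) by applying part (1) to the image of $B$ under $b\mapsto(b_i-b_j)_{i<j}$. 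In effect the paper re-derives inline a special case (``$\mathbb{Q}^2$ is not a finite union of lines'') of the exact principle you cite in full generality, so the mathematical content is the same, but your version isolates the key lemma cleanly, handles both parts uniformly by a reduction rather than leaving part (1) as ``proven in the same way,'' and makes explicit why the torsion-free hypothesis is essential (your $\mathbb{Z}^2/2\mathbb{Z}^2$ example shows a purely group-theoretic covering argument would fail). The paper's version is slightly more self-contained in that it never names the infinite-field covering fact; yours is cleaner to read and arguably easier to check.
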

\begin{proof}Both parts are proven in the same way, but we will focus on the second part as it is the only one we will use.  Since $A$ is torsion free, it embeds into $A\otimes_{\mathbb{Z}} \mathbb{Q}$, so we may speak of $\mathbb{Q}$-linear combinations of elements of $A$ or even of elements of $B$.  We will prove the contrapositive of the lemma, so we assume that for every pair $i\neq j$ there exists some $w\in B$ with $w_i\neq w_j$.

Let $v\in B$ be an element with the largest possible number of pairs $i, j$ with $v_i\neq v_j$.  If all entries are distinct, then the lemma holds; otherwise pick $k, l$ such that $v_k = v_l$.  Let $w$ be such that $w_k \neq w_l$.  Now we consider $\mathbb{Q}$-linear combinations (which may not lie in $B$) of the form
\begin{equation}
x = \alpha v + \beta w
\end{equation}
If $i, j$ are distinct indices such that either $v_i \neq v_j$ or $w_i \neq w_j$, then $x_i = x_j$ would not hold for all choices of $\alpha$ and $\beta$, so would only hold for $(\alpha, \beta)$ lying in a subspace of dimension at most $1$ in $\mathbb{Q}^2$.  Since $\mathbb{Q}^2$ is not the union of a finite collection of lines, we may choose $\alpha$ and $\beta$ such that $x_i \neq x_j$ whenever $v_i \neq v_j$ or $w_i \neq w_j$.  But now $x$ has more distinct pairs of entries than $v$; it has a distinct pair wherever $v$ does, but also $x_k\neq x_l$ since $w_k\neq w_l$.  Finally, we clear denominators - if $n$ is a common denominator for $\alpha, \beta$, then $nx$ has distinct pairs of entries in the same slots as $x$, but $nx = (n\alpha)v + (n\beta)w$ lies in $B$.  This contradicts the maximality of $v$.
\end{proof}

In the tropical setting, the following lemma is significant step towards classifying subgroups of the torus which are given by linear equations.
\begin{lem}\label{lemma: main lemma}
Consider a system of tropical linear equations of the form
\begin{equation}\label{equation: tropical-linear equation}
\max(a_1 + x_1, \ldots, a_n + x_n) = \max(b_1 + x_1, \ldots, b_n + x_n), 
\end{equation}
where the coefficients $a_1,\ldots,a_n,b_1,\ldots,b_n$ are in $\T$.\footnote{In \eqref{equation: tropical-linear equation}, addition is the usual addition of real numbers corresponding to the tropical multiplication.}  Let $G\subseteq\mathbb{R}^n$ be the set of $x$ such that $x$ and $(-x)$ both satisfy the above system of equations.  Suppose $G$ is a subgroup.  Then
\begin{enumerate}
    \item 
Suppose $v=(v_1,\dots,v_n)\in G$ and $k\in \{1,\ldots, n\}$ are such that $v_k > v_i$ for each $i \neq k$.  Then each of the equations defining $G$ has $a_k = b_k$. 
    \item 
Suppose the system of equations is finite.  Suppose also that $v=(v_1,\dots,v_n)\in G$ and $k$ are such that $v_k > v_i$ for each $i \neq k$.  Then $te_k\in G$ for all $t\in\mathbb{R}$ ($e_k$ is the standard basis vector).
    \item Suppose the system of equations is finite and that $G$ contains an element whose coordinates are distinct.  Then $G = \mathbb{R}^n$.
\end{enumerate}
\end{lem}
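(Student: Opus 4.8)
The plan is to prove the three parts in order: (1) by an asymptotic domination argument at the points $Nv$, (2) by reducing to a statement about the defining equations and then invoking the subgroup structure, and (3) by iterating (2).

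For (1): since $G$ is a subgroup, $Nv\in G$ for all positive integers $N$, so every defining equation $\max_i(a_i+x_i)=\max_i(b_i+x_i)$ holds at $x=Nv$. As $v_k>v_i$ for $i\neq k$, the difference $(Nv_k)-(Nv_i)=N(v_k-v_i)\to\infty$, so the $k$-th summand eventually dominates each side: if $a_k\neq-\infty$ then the left side equals $a_k+Nv_k$ for $N\gg 0$, and similarly the right side equals $b_k+Nv_k$ when $b_k\neq-\infty$, so comparing linear asymptotics in $N$ forces $a_k=b_k$. If exactly one of $a_k,b_k$ is $-\infty$, one side has slope $v_k$ and the other slope $\leq\max_{i\neq k}v_i<v_k$, impossible; if both are $-\infty$ there is nothing to prove.

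For (2): by (1) every equation has $a_k=b_k$, and one checks that $se_k$ satisfies such an equation for every $s\in\mathbb{R}$ iff additionally $\max_{i\neq k}a_i=\max_{i\neq k}b_i$ (let $s\to-\infty$); when $a_k=b_k=-\infty$ this already follows from $0\in G$. So I would reduce (2) to the claim that no equation with $a_k=b_k$ finite has $\max_{i\neq k}a_i\neq\max_{i\neq k}b_i$ (call such an equation \emph{bad}). Next I would observe that $\{t:te_k\in G\}$ is the intersection over the bad equations of symmetric closed intervals $[-r_E,r_E]$, with $r_E\geq 0$ thanks to $0\in G$, hence itself a symmetric closed interval $[-r,r]$; being also a subgroup of $\mathbb{R}$ it must be $\{0\}$ or $\mathbb{R}$, so it remains to rule out the existence of a bad equation. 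Assuming one exists, after swapping the roles of $a$ and $b$ I may take $a_k=b_k=A^*:=\max_{i\neq k}a_i>B^*:=\max_{i\neq k}b_i$ and fix $p\neq k$ with $a_p=A^*$; using $b_i\leq B^*<A^*$ for all $i\neq k$ (and that every defining set is invariant under translation by $\mathbb{R}\mathbf{1}$, so $\mathbb{R}\mathbf{1}\subseteq G$), a short case analysis shows that no element of $G$ can have its $p$-th coordinate strictly maximal, and likewise (apply it to $-x$) not strictly minimal. I would then finish by induction on $n$, splitting via Lemma~\ref{lemma: index lemma}(2): either $G\subseteq\{x_i=x_j\}$ for some $i\neq j$ — and $k\notin\{i,j\}$ since $v_k$ is strictly maximal — so I restrict to this $(n-1)$-dimensional partition subspace, where the equations keep the required shape, and apply the inductive case; or $G$ contains a vector with all distinct coordinates, and combining it with $v$ produces enough group elements with prescribed strict argmax to force $a_i=b_i$ for all $i$ in the bad equation, a contradiction. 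The base case $n=1$ is immediate, such an equation reducing to $a_1=b_1$.

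For (3): given $v\in G$ with distinct coordinates, relabel so that $v_{k_1}>v_{k_2}>\cdots>v_{k_n}$; part (2) with $k=k_1$ gives $\mathbb{R}e_{k_1}\subseteq G$, and then for $s\gg0$ the element $v-s(e_{k_1}+\cdots+e_{k_j})\in G$ has $k_{j+1}$ as its unique largest coordinate, so part (2) gives $\mathbb{R}e_{k_{j+1}}\subseteq G$; inductively $\mathbb{R}e_i\subseteq G$ for all $i$, whence $G=\mathbb{R}^n$. The hard part will be excluding bad equations in (2): showing that a subgroup defined by finitely many tropical-linear equations and containing a vector with a strictly dominant coordinate admits no bad equation. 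The reduction above turns this into a combinatorial question about which coordinates can be strictly extremal on $G$, but closing the ``all distinct coordinates'' branch of the induction is where the argument is least routine and will need the most care.
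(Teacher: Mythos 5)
Your treatment of part (1) is essentially the paper's: scale $v$ by large $N$, let the $k$th term dominate, compare slopes. Your proof of part (3) from part (2) (push the coordinates down one at a time in decreasing order of $v_{k_i}$) is a minor variant of the paper's argument (which pushes down all of $I$ at once) and is correct. The problem is part (2).

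Your reduction of part (2) to ``no bad equation exists'' is a correct observation: with $a_k=b_k$ from part (1), the set $\{t : te_k\in G\}$ is an intersection of symmetric intervals, hence a subgroup of $\mathbb{R}$ of the form $\{0\}$ or $\mathbb{R}$, and you must rule out $\{0\}$. But the route you then take has a genuine gap that you yourself flag. The case split via Lemma \ref{lemma: index lemma}(2) is fine in the branch where every element of $G$ has a coincidence of coordinates --- identifying $x_i$ with $x_j$ does preserve ``badness'' and drops $n$ by one. The trouble is the other branch, where $G$ contains a vector $w$ with all distinct coordinates. You assert that combining $v$ and $w$ ``produces enough group elements with prescribed strict argmax to force $a_i=b_i$ for all $i$,'' but this does not follow: the set of indices achievable as a strict argmax of $\alpha v+\beta w$ is exactly the set of upper vertices of the planar point configuration $\{(v_i,w_i)\}_i$, and $p$ need not be among them. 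Indeed you have already shown that $p$ cannot be a strict argmax of \emph{any} element of $G$, so this route is looking for a contradiction by producing something you just proved cannot exist --- it only works if $G$ is large enough to reach $p$ anyway, which is precisely what you do not yet know. Nothing you have available in that branch (two particular elements $v,w$ of an a priori possibly two-dimensional subgroup $G$) gives you control over the $p$th coordinate.

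The paper closes part (2) much more directly, bypassing the ``bad equation'' reduction entirely. For each defining equation, with $a_k=b_k$ from part (1), one considers $Mv + te_k$: for $M$ large the $k$th term strictly dominates both sides at $Mv$, and this strict domination has positive slack, so it persists after perturbing $x_k$ by any $|t|<\delta$; hence $Mv+te_k$ satisfies the equation. Dually, at $-Mv$ the $k$th term is strictly dominated, so perturbing $x_k$ does not change either maximum, and $-Mv-te_k$ satisfies the equation. Because the system is finite, one chooses $M$ and then $\delta$ uniformly, concluding $Mv+te_k\in G$ for $|t|<\delta$, whence $te_k=(Mv+te_k)-Mv\in G$, and by the subgroup property $se_k\in G$ for all real $s$. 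This is the step your proposal needs; the bad-equation framing is a correct reformulation of the goal, but your inductive attack on it does not close.
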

\begin{proof}For the first part, fix one of the equations defining $G$ and write it as
\begin{equation}\label{equation: tropical-linear equation2}
\max(a_1 + x_1, \ldots, a_n + x_n) = \max(b_1 + x_1, \ldots, b_n + x_n).
\end{equation}
If $a_k \neq -\infty$, then by picking $M \in \N$ large enough, we can ensure $a_k + Mv_k > a_i + Mv_i$ for $i\neq k$. By using that $Mv\in G$ (and hence $Mv$ satisfies the defining equations), we have
\begin{equation}
a_k + Mv_k = \max(a_1 + Mv_1, \ldots, a_n + Mv_n) = \max(b_1 + Mv_1, \ldots, b_n + Mv_n).
\end{equation}

There is some $l$ such that the $l$th term is the maximum on the right side for infinitely many such $M$, and hence
\begin{equation}
    a_k + Mv_k = b_l + Mv_l
\end{equation}
for infinitely many choices of $M$.  Since the left side is finite, so is the right.  And by taking the difference between this equation for two values of $M$ we get $v_k = v_l$.  Since $v_k > v_i$ for $i \neq k$,  we have $k = l$.  Subtracting $Mv_k$ gives $a_k = b_k$ as desired.  The same argument works if $b_k\neq -\infty$, and the claim is trivial if both are $-\infty$.

For the second part, we again begin by fixing one of the defining equations.  We claim that for all sufficiently large $M$, there exists $\delta > 0$ such that $Mv + te_k$ and its negative satisfy the selected equation whenever $|t| < \delta$. 

We consider two cases according to whether $a_k, b_k$ are finite.  Note that if $a_k = b_k = -\infty$, then $x_k$ has no influence on \eqref{equation: tropical-linear equation2}, and so we may freely add or subtract any $t\in \mathbb{R}$ from the $k$th component.  Since $v\in G$, $Mv \in G$ for any $M\in\mathbb{N}$ and so $Mv$ and $-Mv$ satisfy the tropical linear equation and hence so do $Mv + te_k$ and $-Mv - te_k$ for any $t$.

Now, let's assume $a_k$ and $b_k$ are finite.  Since $v_k > v_i$ for each $i\neq k$, if $M\in\mathbb{N}$ is large enough then $a_k + Mv_k > a_i + Mv_i$ for all $i\neq k$, and similarly for $b_k$.  By the same argument, if we choose $M$ large enough, then we also have $a_k - Mv_k < a_i - Mv_i$ for $i\neq k$ and similarly for $b_k$.

If $\delta > 0$ is small enough, then in each of these strict inequalities, the two sides differ by more than $\delta$.  Thus $a_k + Mv_k + t > a_i + Mv_i$, $a_k - Mv_k - t < a_i - Mv_i$, $b_k + Mv_k + t > b_i + Mv_i$ and $b_k - Mv_k - t < b_i - Mv_i$ for all $|t| < \delta$.  Note that $Mv\in G$, and so $Mv$ satisfies \eqref{equation: tropical-linear equation2}, i.e.
\begin{equation}
\max(a_1 + Mv_1, \ldots, a_n + Mv_n) = \max(b_1 + Mv_1, \ldots, b_n + Mv_n).
\end{equation}
Suppose $|t| < \delta$, and observe that,
\begin{equation} 
\max(a_1 + Mv_1, \ldots, a_k + Mv_k + t, \ldots, a_n + Mv_n) = a_k + Mv_k + t
\end{equation}
and using $a_k = b_k$ this is
\begin{equation}
b_k + Mv_k + t = \max(b_1 + Mv_1, \ldots, b_k + Mv_k + t, \ldots, b_n + Mv_n)
\end{equation}
so $Mv + te_k$ satisfies \eqref{equation: tropical-linear equation2}.  

On the other hand $-Mv$ must also satisfy \eqref{equation: tropical-linear equation2}, so
\begin{equation}\label{equation: tropical inverse linear equation}
\max(a_1 - Mv_1, \ldots, a_n - Mv_n) = \max(b_1 - Mv_1, \ldots, b_n - Mv_n).
\end{equation}
For $|t| < \delta$, $a_k - Mv_k - t < a_i - Mv_i$ implies the $k$th term is not the maximum in either the left side of the previous equation or in
\begin{equation}
\max(a_1 - Mv_1,\ldots, a_k - Mv_k - t,\ldots, a_n - Mv_n)
\end{equation}
and so we may drop the $k$th term in either expression, and furthermore since only the $k$th term differs, both expressions are equal, i.e.
\begin{equation}
\max(a_1 - Mv_1,\ldots, a_k - Mv_k - t,\ldots, a_n - Mv_n) = \max(a_1 - Mv_1, \ldots, a_n - Mv_n).
\end{equation}
Combining this (and the analogue for $b_k$) with \eqref{equation: tropical inverse linear equation},
\begin{equation}
\begin{aligned}
    \max(a_1 - Mv_1, \ldots, a_k - Mv_k - t,\ldots, a_n - Mv_n)\\
  = \max(b_1 - Mv_1, \ldots, b_k - Mv_k - t, \ldots, b_n - Mv_n).   
\end{aligned}
\end{equation}

In both cases, we have seen that $Mv + te_k$ and $-Mv - te_k$ satisfy the selected tropical linear equation for all $|t|<\delta$ so long as $M$ is large enough and $\delta$ is small enough (with the threshold depending on $M$).  Since there are finitely many equations defining $G$, we first select $M$ large enough for all of them and then select $\delta$ small enough for all of them.  Then $Mv + te_k$ and its negative satisfy all of the defining equations and hence lie in $G$ whenever $|t|<\delta$.  Hence so does $(Mv + te_k) - Mv = te_k$.  Furthermore for any real $s$, $se_k$ is an integer multiple of $te_k$ for some small $t$, and hence lies in $G$.

For the third part, let $I$ be the set of indices $j$ such that $te_j\in G$ for all $G$.  Suppose for the sake of contradiction that $I\neq \{1,\ldots,n\}$.  Let $v\in  G$ have distinct coordinates.  For each $j\in I$, we may change the $j$th coordinate of $v$ arbitrarily without leaving $G$ (because $v + te_j\in G$).  After such modifications, we can produce a vector $w\in G$ with distinct coordinates such that the largest coordinate is not the $j$th for any $j\in I$.  Let $k$ be the index of the largest coordinate.  Then by the previous part, $te_k\in G$ for all $t$ and hence $k\in I$, a contradiction.  Thus $te_i\in G$ for all $i$.  This clearly implies $G = \mathbb{R}^n$.
\end{proof}

\begin{pro}\label{proposition: tropical index}
Let $G\subseteq (\T^\times)^n = \mathbb{R}^n$ be a linear subgroup with a finite defining system of equations.  Then there is some equivalence relation $\sim$ on $\{1,\ldots,n\}$ such that
\[
G = \{x\in\mathbb{R}^n\mid x_i = x_j \;\mathrm{if}\; i\sim j\}.
\]
\end{pro}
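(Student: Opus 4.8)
The plan is to reduce to Lemma~\ref{lemma: main lemma}(3) by first separating out the coordinates that are ``linked'' to one another. By Definition~\ref{definition: linear subgroup} and Remark~\ref{rmk: linear subgroup}(1), $G$ is cut out inside $(\T^\times)^n = \mathbb{R}^n$ by a finite system of tropical linear equations of the form \eqref{equation: tropical-linear equation}, where we require that both $x$ and $-x$ satisfy every equation (the ``$-x$'' coming from the fact that $A^{-1}$ corresponds to $-x$ in the diagonal torus). So $G$ is exactly the set described in the hypothesis of Lemma~\ref{lemma: main lemma}, and it is a subgroup by assumption. Thus all three parts of that lemma are available.

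First I would use Lemma~\ref{lemma: index lemma}(2) (with $A = \mathbb{R}$, which is torsion-free): if for \emph{every} $v\in G$ there were two coordinates $i\neq j$ with $v_i = v_j$, then there would be a \emph{fixed} pair $i\sim j$ with $v_i = v_j$ for all $v\in G$. So I would define $\sim$ to be the relation on $\{1,\dots,n\}$ generated by declaring $i\sim j$ whenever $x_i = x_j$ holds identically on $G$; this is an equivalence relation (reflexivity is trivial; symmetry and transitivity are immediate from the identical-equality characterization). Let $V := \{x\in\mathbb{R}^n \mid x_i = x_j \text{ whenever } i\sim j\}$. Plainly $G\subseteq V$, and I must prove the reverse inclusion, i.e.\ $G = V$.

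To do this, I would pass to the quotient: choose one representative from each $\sim$-class, say classes $C_1,\dots,C_r$, and identify $V \cong \mathbb{R}^r$ via $x \mapsto (x|_{C_1},\dots,x|_{C_r})$. Restricting each defining equation of $G$ to the subspace $V$ (i.e.\ substituting a common variable for all coordinates in a class) produces a finite system of tropical linear equations in the $r$ ``class variables''; call $G' \subseteq \mathbb{R}^r$ the subgroup of solutions $y$ such that $y$ and $-y$ satisfy this reduced system. Under the identification $V\cong\mathbb{R}^r$, the subgroup $G$ corresponds exactly to $G'$, because a vector of $V$ lies in $G$ iff it satisfies the original equations iff its image satisfies the reduced equations. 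Now, by the very definition of $\sim$ (we have collapsed all coordinates that are forced equal on $G$), no pair of distinct class variables is identically equal on $G'$; so by Lemma~\ref{lemma: index lemma}(2) applied in the contrapositive direction, there is some $y\in G'$ all of whose $r$ coordinates are distinct. Lemma~\ref{lemma: main lemma}(3) (the system being finite) then forces $G' = \mathbb{R}^r$, hence $G = V$, which is the desired conclusion.

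The main obstacle, and the point requiring care, is the bookkeeping in the reduction step: verifying that restricting the system \eqref{equation: tropical-linear equation} to $V$ really does yield a system of the \emph{same} tropical-linear form (a $\max$ of $\mathbb{T}$-affine terms equals a $\max$ of $\mathbb{T}$-affine terms) in the class variables, and that the ``$x$ and $-x$'' symmetry is preserved under this substitution so that Lemma~\ref{lemma: main lemma} genuinely applies to $G'$. One also has to make sure that the definition of $\sim$ via ``identically equal on $G$'' is consistent with the conclusion of Lemma~\ref{lemma: index lemma}(2)---this is where torsion-freeness of $(\mathbb{R},+)$ is used---and that after collapsing, \emph{no} nontrivial identical equality survives among the class variables, which is exactly what makes the hypothesis of Lemma~\ref{lemma: main lemma}(3) applicable. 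Everything else is routine.
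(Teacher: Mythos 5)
Your proof is correct and uses the same two key ingredients as the paper (Lemma~\ref{lemma: main lemma}(3) and Lemma~\ref{lemma: index lemma}(2)); the only organizational difference is that the paper collapses one forced-equal pair $x_i=x_j$ at a time and runs induction on $n$, whereas you identify all forced-equal pairs up front, collapse to $\mathbb{R}^r$ in a single step, and apply the lemmas once. The bookkeeping you flag (that substituting class variables turns a finite system of the form~\eqref{equation: tropical-linear equation} into a finite system of the same form, with $a'_l = \max_{i\in C_l} a_i$, and that the $x\leftrightarrow -x$ symmetry is preserved) does go through, so there is no gap.
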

\begin{proof}
We proceed by induction on dimension, with the $0$-dimensional case being trivial. From Lemma \ref{lemma: main lemma} we know that if $G$ contains a vector with distinct coordinates, then $G = \mathbb{R}^n$.  So we may assume that for every $v\in G$ there exist $i, j$ such that $v_i = v_j$. Then, from Lemma \ref{lemma: index lemma}, there exists a pair $i, j$ such that for all $v\in G$ we have $v_i = v_j$.  

We may then add the tropical linear equation $x_i = x_j$ to the set of defining equations for $G$.  For each other defining equation (say $\displaystyle\max_k \{a_k + x_k\} = \displaystyle\max_k \{b_k + x_k\}$), we may replace $x_i$ with $x_j$ to obtain
\begin{equation}\label{eq: Tropical linear with two combined terms}
\max(\max_{k\not\in\{i,j\}} \{a_k + x_k\}, \max\{a_i, a_j\} +x_j) = \max(\max_{k\not\in\{i,j\}} \{b_k + x_k\}, \max\{b_i, b_j\} +x_j)
\end{equation}

Note that the map $\phi: G\rightarrow \mathbb{R}^{n-1}$ obtained by deleting the $i$th coordinate is injective (as $\phi(g)$ still determines the $j$th coordinate).  Let $H = \{ (x_1,\ldots, \widehat{x_i},\ldots, x_n) \mid x\in G \}$ be its image.  Clearly $H$ is a subgroup of $\mathbb{R}^{n-1}$.  It is also clear that $H$ is the set of $x\in \mathbb{R}^{n-1}$ such that $x$ and $(-x)$ satisfy each of the equations \eqref{eq: Tropical linear with two combined terms}.  Therefore, by the inductive hypothesis, there is some equivalence relation $\equiv$ on $\{1,\ldots,n\} \backslash \{i\}$ such that 
\[
H = \{ x \in \mathbb{R}^{n-1} \mid x_k = x_l \;\mathrm{if}\; k\equiv l\}.
\]

Now we define $\sim$ so that $k \sim l$ if either $k, l \neq i$ and $k \equiv l$ or if $k, l \in \{i, j\}$. Then, one can easily see that $\sim$ satisfies the assertion on $\{1,\dots,n\}$.
\end{proof}

There is a similar result for the semifield $\mathbb{R}_{\geq 0}$.

\begin{pro}\label{proposition: nonnegative index}
Let $G\subseteq (\mathbb{R}_{\geq 0}^\times)^n $ be a linear subgroup. Then there is some equivalence relation $\sim$ on $\{1,\ldots,n\}$ such that $G = \{x\in(\mathbb{R}_{\geq 0}^\times)^n\mid x_i = x_j \;\mathrm{if}\; i\sim j\}$.
\end{pro}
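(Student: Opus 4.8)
The plan is to work multiplicatively and reduce the proposition to a Vandermonde argument, exploiting that the units of $\mathbb{R}_{\geq 0}$ are exactly the strictly positive reals, so $(\mathbb{R}_{\geq 0}^\times)^n = (\mathbb{R}_{>0})^n$. A defining equation $\sum_i d_i x_i = \sum_i d_i y_i$ with $x_i, y_i \in \mathbb{R}_{\geq 0}$ becomes, after setting $c_i := x_i - y_i \in \mathbb{R}$, an ordinary real-linear equation $\sum_i c_i d_i = 0$. So I would first record that there is a set $S \subseteq \mathbb{R}^n$ with $G = \{ d \in (\mathbb{R}_{>0})^n \mid \sum_i c_i d_i = 0 \text{ and } \sum_i c_i d_i^{-1} = 0 \text{ for all } c \in S\}$, and that, since $G$ is a subgroup, $d^k \in G$ for every $d\in G$ and $k\in\mathbb{Z}$; consequently $\sum_i c_i d_i^{k} = 0$ for all $c\in S$, $d\in G$, $k\in\mathbb{Z}$, and in particular (taking $d=\mathbf{1}$) $\sum_i c_i = 0$ for each $c\in S$.

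Next I would define the candidate equivalence relation: put $i\sim j$ iff $d_i = d_j$ for every $d\in G$. Setting $G' := \{d \in (\mathbb{R}_{>0})^n \mid d_i = d_j \text{ whenever } i\sim j\}$, the inclusion $G\subseteq G'$ is immediate, and the proposition is exactly the reverse inclusion $G'\subseteq G$.

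The heart of the argument is a genericity step: there is $d^\ast\in G$ realizing the partition exactly, i.e.\ $d^\ast_i = d^\ast_j$ if and only if $i\sim j$. To build it, for each pair $(a,b)$ with $a\not\sim b$ I would choose (using the definition of $\sim$) a vector $v^{(a,b)}$ in the additive subgroup $\log G \subseteq \mathbb{R}^n$ (coordinatewise $\log$) with $v^{(a,b)}_a \neq v^{(a,b)}_b$; listing these finitely many vectors as $v_1,\dots,v_m$, the form $(\sum_j t_j v_j)_a - (\sum_j t_j v_j)_b = \sum_j t_j\big((v_j)_a - (v_j)_b\big)$ is a nonzero linear form in $t=(t_1,\dots,t_m)$ for each bad pair (the chosen witness contributes a nonzero coefficient), so the product of all these forms over bad pairs is a nonzero polynomial, which therefore does not vanish identically on $\mathbb{Z}^m$. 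Picking $t^\ast\in\mathbb{Z}^m$ where it is nonzero, $v^\ast := \sum_j t^\ast_j v_j$ lies in $\log G$ (a subgroup) and $d^\ast := \exp(v^\ast)\in G$ separates every bad pair while still agreeing on every $\sim$-related pair. I expect this step to be the main obstacle, because $G$ is only an abstract subgroup of the torus — not assumed closed or a Lie subgroup — so one cannot use a topology on $G$ directly; the point is to descend the elementary fact that a nonzero real polynomial does not vanish on $\mathbb{Z}^m$ to an honest element of the subgroup.

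With $d^\ast$ in hand the proof closes quickly. Fix $c\in S$, let $P_1,\dots,P_r$ be the $\sim$-classes with associated (distinct) values $u_\ell = d^\ast_i$ for $i\in P_\ell$, and set $C_\ell := \sum_{i\in P_\ell} c_i$. Applying $\sum_i c_i (d^\ast_i)^k = 0$ for $k=0,1,\dots,r-1$ and grouping terms by class gives $\sum_{\ell} C_\ell u_\ell^k = 0$; the $r\times r$ Vandermonde matrix $(u_\ell^k)$ is invertible, so $C_\ell = 0$ for all $\ell$. Hence any $d^0\in G'$, being constant on each $\sim$-class, satisfies $\sum_i c_i d^0_i = \sum_\ell C_\ell u'_\ell = 0$ (where $u'_\ell$ is the value of $d^0$ on $P_\ell$) and likewise $\sum_i c_i (d^0_i)^{-1} = 0$; so $d^0$ and $(d^0)^{-1}$ satisfy $S$, giving $d^0\in G$. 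Therefore $G'\subseteq G$, so $G = G'$, which is the asserted description. (One could instead try to imitate the tropical proof of Proposition \ref{proposition: tropical index} via Lemmas \ref{lemma: index lemma} and \ref{lemma: main lemma}, but the multiplicative structure of $\mathbb{R}_{\geq 0}$ makes the Vandermonde route cleaner and, notably, removes the need for any finiteness hypothesis on the defining system.)
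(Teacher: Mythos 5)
Your proof is correct, and it does the job cleanly. The core ingredient is the same one the paper uses, namely the nonvanishing of a Vandermonde determinant evaluated at integer powers of a suitable element of $G$, but the way you assemble the argument is genuinely different. The paper's proof is inductive: if $G$ has an element with all coordinates distinct, integer powers of that element span $\mathbb{R}^n$ by Vandermonde and $G$ must be the whole torus; otherwise Lemma \ref{lemma: index lemma} supplies a forced equality $v_i = v_j$, and one deletes a coordinate and appeals to the inductive hypothesis. You instead define the equivalence relation $\sim$ intrinsically up front, produce a single $d^\ast\in G$ that is \emph{$\sim$-generic} (distinct on distinct classes), and run a block Vandermonde on the $r$ class values $u_1,\ldots,u_r$ to conclude that each per-class coefficient sum $C_\ell$ vanishes, which immediately gives $G'\subseteq G$. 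Your construction of $d^\ast$ is essentially a strengthening of Lemma \ref{lemma: index lemma}(2) — the paper only needs a fully coordinate-distinct element, you need one that is exactly generic for $\sim$ — but the polynomial-nonvanishing-on-$\mathbb{Z}^m$ argument you give handles this without difficulty. Two small remarks: first, your reformulation of ``linear subgroup'' as $\sum_i c_i d_i = 0$ with $c_i\in\mathbb{R}$ is correct because an equation $\sum_i x_i d_i = \sum_i y_i d_i$ over $\mathbb{R}_{\geq 0}$ is equivalent to $\sum_i (x_i-y_i) d_i = 0$ and conversely; second, your parenthetical that the Vandermonde route ``removes the need for any finiteness hypothesis'' is accurate for your argument, but the paper's inductive proof of this proposition also does not impose finiteness (only the tropical Proposition \ref{proposition: tropical index} needs it, because of Lemma \ref{lemma: main lemma}), so the distinction you draw there is slightly overstated.
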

\begin{proof}
We proceed by induction on $n$ with the $n = 0$ case being trivial.  We consider two cases according to whether there is an element of $G$ with distinct coordinates.

First suppose there exists $w\in G\subseteq (\mathbb{R}_{\geq 0}^\times)^n$ such that the $w_i$ are all distinct.  For all integers $k$, we have $(w_1^k, \ldots, w_n^k)\in G$, so all such elements satisfy the defining linear equations of $G$.  On the other hand, vectors of the form $(w_1^k, \ldots, w_n^k)$ span $\mathbb{R}^n$ by the nonvanishing of the Vandermonde determinant.  So all of $\mathbb{R}^n$ satisfies the defining linear equations of $G$, and in particular this is true for all elements of $(\mathbb{R}_{\geq 0}^\times)^n$.  Hence $G = (\mathbb{R}_{\geq 0}^\times)^n$.

In the other case, where $G$ does not contain an element with distinct entries, as in Proposition \ref{proposition: tropical index} there exists $i, j$ such that $v_i = v_j$ for all $v\in G$.  As in the proof of Proposition \ref{proposition: tropical index}, we can use the inductive hypothesis on $\{ (v_1,\ldots, v_{i-1}, v_{i+1},\ldots, v_n)\mid v\in G\}$. 
\end{proof}

\section{Automorphisms of weakly free modules and $K$-linear spaces}

In this section, we study automorphism groups of weakly free modules and $K$-linear spaces, and prove one of the main theorems of this work. The result is used in the subsequent section on valuated matroidal representations. To this end, we recall some results from group cohomology.

First, recall that there is a well-known relation between group extensions and degree 2 group cohomology. Consider the exact sequence of groups (considered with multiplication)
\begin{equation}
1 \rightarrow A \rightarrow B \rightarrow C \rightarrow 1
\end{equation}
with $A$ abelian. In such a setting, the action of $B$ on $A$ by conjugation descends to an action of $C$.  We call $B$ an \emph{extension} of $C$ by the $C$-module $A$.  Two such extensions $B, B'$ are considered isomorphic if there is a group isomorphism $B\rightarrow B'$ compatible with the inclusion maps from $A$ and the projection maps to $C$.  We recall the following standard result (see, for instance, \cite[Theorem 6.6.3]{weibel1994introduction}).

\begin{pro}\label{proposition: H^2 and extensions}
Let $C$ be a group and $A$ be an $C$-module.  There is a one-to-one correspondence between extensions of $C$ by $A$ and elements of $H^2(C, A)$.  Moreover the zero element $0\in H^2(C, A)$ corresponds to the semidirect product $C \ltimes A$. 
\end{pro}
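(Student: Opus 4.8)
The plan is to prove this via the classical factor-set (inhomogeneous $2$-cocycle) construction; I will exhibit maps in both directions and check they are mutually inverse. Throughout I write the $C$-module $A$ additively and keep $B$, $C$ multiplicative.

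Given an extension $1 \to A \xrightarrow{\iota} B \xrightarrow{p} C \to 1$, first choose a set-theoretic section $s\colon C \to B$ of $p$ normalized so that $s(1) = 1_B$. For $c_1, c_2 \in C$ the element $s(c_1) s(c_2) s(c_1 c_2)^{-1}$ lies in $\ker p = \iota(A)$, so (identifying $A$ with $\iota(A)$) it defines a function $f\colon C \times C \to A$. Writing out the associativity identity $(s(c_1)s(c_2))s(c_3) = s(c_1)(s(c_2)s(c_3))$ inside $B$, and using that the conjugation action of $B$ on $A$ induces exactly the given $C$-action, one obtains the $2$-cocycle condition
\[
c_1 \cdot f(c_2,c_3) - f(c_1 c_2, c_3) + f(c_1, c_2 c_3) - f(c_1, c_2) = 0.
\]
Replacing $s$ by another normalized section $s'$ alters $f$ by the coboundary of the $1$-cochain $c \mapsto s'(c)s(c)^{-1}$, so the class $[f]\in H^2(C,A)$ depends only on the extension; and an isomorphism of extensions carries one section to another, hence leaves the class unchanged. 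This gives a well-defined map from isomorphism classes of extensions to $H^2(C,A)$.

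Conversely, given a normalized $2$-cocycle $f$, define $B_f := A \times C$ as a set, with product
\[
(a_1, c_1)\cdot(a_2, c_2) := \bigl(a_1 + c_1 \cdot a_2 + f(c_1, c_2),\ c_1 c_2\bigr).
\]
The cocycle identity is precisely what makes this associative; normalization gives $(0, 1)$ as identity, and inverses are then easily produced. The maps $a \mapsto (a,1)$ and $(a,c)\mapsto c$ present $B_f$ as an extension of $C$ by $A$ inducing the original module structure, and cohomologous cocycles $f$ and $f + \delta g$ yield isomorphic extensions via $(a,c)\mapsto(a + g(c), c)$. A direct check shows the two assignments are mutually inverse, proving the bijection.

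For the final assertion, the zero class is represented by $f \equiv 0$, and then the multiplication on $B_f$ reads $(a_1,c_1)(a_2,c_2) = (a_1 + c_1\cdot a_2,\ c_1 c_2)$, which is by definition the semidirect product $C \ltimes A$. Conversely, if an extension is isomorphic to $C \ltimes A$, the canonical embedding $C \hookrightarrow C \ltimes A$ is a group homomorphism splitting $p$, so its factor set vanishes and the associated class is $0$. The only genuinely delicate point in the argument is the bookkeeping that matches associativity in $B$ with the cocycle identity and tracks the effect of changing the section (together with the parallel check that cohomologous cocycles give isomorphic extensions); everything else is formal. As this is entirely standard, one could alternatively just invoke \cite[Theorem 6.6.3]{weibel1994introduction}.
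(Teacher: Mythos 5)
Your proof is correct and is the standard factor-set (inhomogeneous $2$-cocycle) argument. The paper itself does not prove this proposition; it simply records it as a known result and cites \cite[Theorem 6.6.3]{weibel1994introduction}, which is exactly the fallback you mention at the end, so your write-up supplies the details the paper chose to omit rather than taking a different route. One small point of precision worth keeping in mind: the bijection is between \emph{isomorphism classes} of extensions (with the equivalence defined just before the proposition) and $H^2(C,A)$, and your argument does establish this — you show both that changing the section alters $f$ by a coboundary and that an isomorphism of extensions preserves the class, and conversely that cohomologous cocycles produce isomorphic extensions — so the correspondence descends correctly to isomorphism classes on one side and cohomology classes on the other.
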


The following lemma will be used to find cohomological obstructions to various statements about group actions on modules over connected zero-sum-free semirings.  While we already understand actions on free modules, a motivating example of the use of Lemma \ref{lemma: cohomological classification of homomorphisms} is to apply it to
\begin{equation}\label{eq: ex seq}
1 \rightarrow (K^\times)\rightarrow GL_n(K)\rightarrow S_n \rightarrow 1
\end{equation}
in order to understand the extent to which the action on a free module is determined by the action on the set of basis lines.

Fix an exact sequence of groups
\begin{equation}
1 \rightarrow A \rightarrow B \xrightarrow{\pi} C \rightarrow 1
\end{equation}
with $A$ abelian.  Let $G$ be another group.  Call two homomorphisms $\phi, \psi: G \rightarrow B$ \textit{equivalent} if there is some $a\in A$ such that 
\[
\phi(g)=a\psi(g)a^{-1}, \quad \forall g \in G.
\]
Fix a homomorphism $\phi: G\rightarrow B$. Then the following holds. 

\begin{lem}\label{lemma: cohomological classification of homomorphisms}   
The set of equivalence classes of homomorphisms $\psi:G\rightarrow B$ with $\pi\psi = \pi\phi$ is in one-to-one correspondence with the elements of $H^1(G, A)$, where $A$ is a $B$-module via conjugation and hence a $G$-module via $\phi$.
\end{lem}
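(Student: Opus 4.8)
The plan is to identify lifts of $\bar\phi:=\pi\phi$ with ``twists'' of the fixed lift $\phi$ by functions $G\to A$, and then to match the homomorphism condition with the cocycle condition and the conjugacy relation with the coboundary relation. First I would note that if $\psi:G\to B$ satisfies $\pi\psi=\pi\phi$, then the pointwise difference $c_\psi(g):=\psi(g)\phi(g)^{-1}$ lies in $\ker\pi=A$ for every $g\in G$, since $\pi(c_\psi(g))=\pi\psi(g)\,\pi\phi(g)^{-1}=1$. Conversely, any function $c:G\to A$ yields a map $\psi_c(g):=c(g)\phi(g)$ with $\pi\psi_c=\pi\phi$, and the assignments $\psi\mapsto c_\psi$ and $c\mapsto\psi_c$ are mutually inverse bijections between $\{\psi:\pi\psi=\pi\phi\}$ and the set of all functions $G\to A$.

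Next I would determine which functions $c$ correspond to homomorphisms. Writing the $G$-action on $A$ through $\phi$ as ${}^g a:=\phi(g)\,a\,\phi(g)^{-1}$ — this is the conjugation action of $B$ on $A$ pulled back along $\phi$, and since $A$ is abelian it descends to an action depending only on $\pi\phi$, which is exactly the $G$-module structure in the statement — a direct computation
\[
\psi_c(gh)=c(g)\phi(g)\,c(h)\phi(h)=c(g)\,\bigl({}^g c(h)\bigr)\,\phi(gh)
\]
shows that $\psi_c$ is a homomorphism if and only if $c(gh)=c(g)\cdot{}^g c(h)$ for all $g,h\in G$, i.e.\ if and only if $c$ is a $1$-cocycle. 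Hence $\psi\mapsto c_\psi$ restricts to a bijection $\{\psi:\pi\psi=\pi\phi\}\xrightarrow{\sim}Z^1(G,A)$, under which $\phi$ itself corresponds to the trivial cocycle.

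Finally I would check that this bijection carries the equivalence relation to the coboundary relation. If $\psi'(g)=a\,\psi(g)\,a^{-1}$ for some $a\in A$, then, using that $A$ is abelian,
\[
c_{\psi'}(g)=a\,\psi(g)\,a^{-1}\phi(g)^{-1}=a\,c_\psi(g)\,\phi(g)\,a^{-1}\phi(g)^{-1}=c_\psi(g)\cdot\bigl(a\cdot{}^g a^{-1}\bigr),
\]
and $g\mapsto a\cdot{}^g a^{-1}$ is precisely the principal crossed homomorphism (coboundary) attached to $a$; conversely every coboundary arises this way. Therefore two lifts are equivalent if and only if their cocycles differ by a coboundary, so the bijection descends to a bijection between equivalence classes of lifts and $Z^1(G,A)/B^1(G,A)=H^1(G,A)$.

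I do not expect a genuine obstacle here; the only points requiring care are bookkeeping ones, namely confirming that the conjugation $G$-action on the abelian group $A$ factors through $C=B/A$ (so that the cohomology group appearing is the one asserted in the statement), and keeping the left/right and inverse conventions in the cocycle and coboundary formulas consistent with whichever normalization of $H^1$ one adopts. With those conventions pinned down as above, each remaining verification is immediate.
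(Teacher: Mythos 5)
Your proposal is correct and follows essentially the same route as the paper: define the cocycle $c_\psi(g)=\psi(g)\phi(g)^{-1}$, check that the homomorphism property of $\psi$ is exactly the cocycle condition, and match conjugation by $a\in A$ with adding the coboundary of $a$. The only cosmetic difference is that you first set up a bijection with all functions $G\to A$ before cutting down to cocycles, and your coboundary comes out as $a\cdot{}^g a^{-1}$ rather than ${}^g a\cdot a^{-1}$, which is just the opposite sign convention; the argument is the same.
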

\begin{proof}
As a first step, suppose $\psi:G \rightarrow B$ is a homomorphism such that $\pi\psi = \pi\phi$.  Define
\[
\eta: G\rightarrow A, \quad g \mapsto \psi(g)\phi(g)^{-1}.
\]
Note that $\psi(g)\phi(g)^{-1}$ lies in $A$ for any $g \in G$ since $\pi\psi = \pi\phi$. Now, observe that for any $g_1,g_2 \in G$, we have
\begin{equation}
\eta(g_1g_2) =  
\psi(g_1) \eta(g_2)\phi(g_1)^{-1} = \eta(g_1) (\phi(g_1) \eta(g_2) \phi(g_1)^{-1}).
\end{equation}
The parenthesized expression is the result of $g_1$ acting on $\eta(g_2)$, from which we see that $\eta: G\rightarrow A$ is a 1-cocycle.  

Conversely, given an 1-cocycle $\eta$, let
\[
\psi: G \to B, \quad g \mapsto \eta(g)\phi(g),
\]
and observe that for any $g_1,g_2 \in G$, we have
\begin{equation}
\psi(g_1g_2) = \eta(g_1g_2)\phi(g_1g_2) = \eta(g_1)(\phi(g_1) \eta(g_2) \phi(g_1)^{-1})(\phi(g_1)\phi(g_2)) = \psi(g_1)\psi(g_2)
\end{equation}
Thus homomorphisms $G\rightarrow B$ lying above $\pi\phi: G\rightarrow C$ are in one-to-one correspondence with 1-cocycles. 

Next, suppose $\psi, \psi':G \rightarrow B$ are equivalent homomorphisms such that $\pi\phi = \pi\psi = \pi \psi'$.  Then there is some $a\in A$ such that $\psi' = a\psi a^{-1}$.  Define $\eta(g) = \psi(g)\phi(g)^{-1}$ and similarly for $\eta'$, which are $1$-cocycles from above. The difference between the two cocycles is
\begin{equation}\label{eq: difference}
\eta(g)\eta'(g)^{-1} = \psi(g)\phi(g)^{-1}\phi(g)\psi'(g)^{-1} = \psi(g) \psi'(g)^{-1} = \psi(g) a\psi(g)^{-1} a^{-1}.
\end{equation}
Using $\psi(g) = \eta(g)\phi(g)$ and that $\eta(g)\in A$ commutes with any other element of $A$, {in particular with the element $\phi(g) a \phi(g)^{-1}$,} the above expression becomes the following
\begin{equation}
\eta(g)(\phi(g) a \phi(g)^{-1}) \eta(g)^{-1} a^{-1} =  \eta(g)\eta(g)^{-1}\phi(g) a \phi(g)^{-1}  a^{-1} = \phi(g) a \phi(g)^{-1}  a^{-1}.
\end{equation}
This is a 1-coboundary.

Conversely, suppose $\psi, \psi'$ correspond to cocycles that differ by a coboundary.  The above computation shows the difference is $\psi(g) \psi'(g)^{-1}$.  So there is some $a\in A$ such that
\[
\psi(g)\psi'(g)^{-1} = \phi(g) a \phi(g)^{-1}  a^{-1}.
\]
Rearranging and using commutativity of $A$ gives
\[
\psi(g) = \phi(g) a \phi(g)^{-1}  a^{-1} \psi'(g) = (\phi(g) a \phi(g)^{-1})  (a^{-1} \eta'(g))\phi(g)
\]
\[
= a^{-1} \eta'(g)\phi(g) a \phi(g)^{-1} \phi(g)=a^{-1} \eta'(g)\phi(g) a = a^{-1}\psi'(g) a.
\]
So $\psi$ and $\psi'$ are equivalent as desired.
\end{proof}

\begin{myeg}
Fix a short exact sequence $0\rightarrow A \rightarrow B \rightarrow B/A \rightarrow 0$ of abelian groups.  Fix a (possibly non-abelian) group $G$ and a homomorphism $\phi: G\rightarrow B$.  By commutativity, $\phi(g) = a\psi(g)a^{-1}$ simplifies to $\phi(g) = \psi(g)$, so equivalence of homomorphisms in the above sense is just equality.  So homomorphisms that are congruent to $\phi$ modulo $A$ are classified by $H^1(G, A)$.  Note that $G$ acts on $A$ trivially because $A$ is contained in the center of $B$.  So $H^1(G, A) = \Hom(G, A)$.  In fact it is easy to directly verify the proposition in this case: If $\psi: G\rightarrow B$ is congruent to $\phi$ modulo $A$, then $\phi \psi^{-1}: G\rightarrow A$ is a homomorphism. 
\end{myeg}

\begin{lem}\label{lemma: basis extension lemma}
Let $M$ be a quasi-free module over a zero-sum-free semifield $K$.  Let $x_1,\ldots, x_n\in M$ be a quasi-basis.  Then the induced elements $\bar{x}_1,\ldots,\bar{x}_n$ form a quasi-basis for $M\otimes_K \mathbb{B}$ as a $\mathbb{B}$-module.  In particular, $\bar{x}_1,\ldots,\bar{x}_n$ are all distinct.
\end{lem}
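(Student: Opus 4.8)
The plan is to realize $-\otimes_K\mathbb{B}$ as base change along an explicit semiring homomorphism, push the presentation of $M$ supplied by Proposition~\ref{proposition: characterization of quasi-free modules} through it, and recognize the outcome via Proposition~\ref{proposition: quasi-free quotient}.

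First I would construct the semiring homomorphism $\phi\colon K\to\mathbb{B}$ with $\phi(0)=0$ and $\phi(x)=1$ for every $x\neq 0$. Multiplicativity uses that a semifield has no zero-divisors, and additivity is precisely where the zero-sum-free hypothesis is needed: if $x\neq 0$ then $x+y\neq 0$, so $\phi(x+y)=1=\phi(x)+\phi(y)$. This identifies $M\otimes_K\mathbb{B}$ with the base change of $M$ along $\phi$, with $\bar x_i=x_i\otimes 1$.

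Next, by Proposition~\ref{proposition: characterization of quasi-free modules} I may write $M\cong K^n/{\sim}$, where $\sim$ is a subcongruence of the congruence $\equiv$ of Lemma~\ref{lemma: equivalence relation identifying vectors with two nonzero components} and the images of the standard basis vectors are the given quasi-basis $x_1,\dots,x_n$. Since $\sim\subseteq\equiv$, every pair of $\sim$ with distinct entries has both entries supported in at least two coordinates; discarding the diagonal pairs from any generating set of $\sim$ thus yields a generating set whose members are pairs neither of whose entries is a scalar multiple of a standard basis vector. Because $-\otimes_K\mathbb{B}$ is a left adjoint (to restriction of scalars along $\phi$) it preserves colimits, in particular the coequalizer exhibiting $M$ as a quotient of $K^n$; hence $M\otimes_K\mathbb{B}\cong\mathbb{B}^n/{\sim''}$, where $\sim''$ is the congruence on $\mathbb{B}^n$ generated by the coordinatewise $\phi$-images of those generating pairs, and the standard basis of $\mathbb{B}^n$ maps onto $\bar x_1,\dots,\bar x_n$. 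Since $\phi^{-1}(0)=\{0\}$, applying $\phi$ coordinatewise preserves supports, so the generators of $\sim''$ again have both entries supported in at least two coordinates and in particular are not scalar multiples of standard basis vectors of $\mathbb{B}^n$.

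Finally I would apply Proposition~\ref{proposition: quasi-free quotient} to $\mathbb{B}^n/{\sim''}$ (with $R=\mathbb{B}$, a zero-sum-free semifield without zero-divisors, and the standard basis as quasi-basis): it gives that $\mathbb{B}^n/{\sim''}$ is quasi-free with quasi-basis the images of the standard basis vectors, namely $\bar x_1,\dots,\bar x_n$. Distinctness of the $\bar x_i$ is then immediate from quasi-independence, since $\bar x_i=\bar x_j$ with $i\neq j$ would give a relation $\bar x_i=\sum_k\delta_{jk}\bar x_k$ forcing $\delta_{jk}=\delta_{ik}$ for all $k$, which fails at $k=i$. The only mildly delicate step is the transport of the presentation: that base change along $\phi$ turns $K^n/{\sim}$ into $\mathbb{B}^n/{\sim''}$ with $\sim''$ generated by the $\phi$-images of the generators of $\sim$. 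This is routine over rings, but in the semiring setting it should be argued through coequalizers and congruences rather than kernels.
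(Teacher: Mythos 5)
Your proof is correct and takes essentially the same route as the paper's: both express $M$ as $K^n/{\sim}$ via Proposition~\ref{proposition: characterization of quasi-free modules}, commute the two quotient operations (base change to $\mathbb{B}$ and quotient by the relations generating $\sim$), and then invoke Proposition~\ref{proposition: quasi-free quotient} on $\mathbb{B}^n/{\sim''}$. You usefully make explicit two points the paper elides in passing — the left-adjoint/coequalizer justification for transporting the presentation through base change, and the observation that $\phi^{-1}(0)=\{0\}$ ensures the ``at least two nonzero coordinates'' hypothesis survives after applying $\phi$ coordinatewise.
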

\begin{proof}

First, we note that for any zero-sum-free semifield $K$, there is a unique map $K \to \mathbb{B}$ sending $a \neq 0$ to $1$ and $0$ to $0$. In particular, $\mathbb{B}$ is a $K$-module. Now, by Proposition \ref{proposition: characterization of quasi-free modules}, there is some congruence relation $\sim$ such that $K^n / \sim$ is isomorphic to $M$. Now, $M\otimes_K \mathbb{B}$ is the quotient of $M$ by relations of the form $\lambda x = x$ for $\lambda\neq 0$.  We may also arrive at $M\otimes_K \mathbb{B}$ instead by first quotienting $K^n$ by $\lambda x = x$ for $\lambda\neq 0$ to get $\mathbb{B}^n$ and then quotienting by the relations that generate $\sim$. As the relations in $\sim$ can only identify pairs of vectors that are equal or that both have at least two nonzero coordinates, the equivalence classes of the standard basis form a quasi-basis for this quotient of $\mathbb{B}^n$ by Proposition \ref{proposition: quasi-free quotient}.  In particular, these equivalence classes are distinct.  By inspection these equivalence classes are the image of the quasi-basis of $M$.  So no two quasi-basis elements of $M$ map to the same element of $M\otimes_K \mathbb{B}$.
\end{proof}

Let $M$ be a quasi-free module over a zero-sum-free semifield $K$.  We may view $\mathrm{Aut}(M)$ as a subgroup of $\text{GL}_n(K)$ by Lemma \ref{lemma: embedding of automorphisms into GL_n}, and as $M\otimes_K \mathbb{B}$ is quasi-free by Lemma \ref{lemma: basis extension lemma}, we may view $\mathrm{Aut}(M\otimes_K\mathbb{B})$ as a subgroup of $S_n = \text{GL}_n(\mathbb{B})$. 

Note that there is a canonical homomorphism $\phi:\mathrm{Aut}(M) \rightarrow \mathrm{Aut}(M\otimes_K \mathbb{B})$.  So the study of $\mathrm{Aut}(M)$ reduces to studying the kernel and image.  In the case when $M$ is a quasi-free module, we can characterize them as follows. 

\begin{lem}\label{lemma: Base change of quasi-free module to B version 2} 
With the same notation as above, the following hold.
\begin{enumerate}
    \item 
$\ker \phi = \mathrm{Aut}(M) \cap (K^\times)^n$, and 
\item 
the image of $\phi$ equals the image of $\mathrm{Aut}(M)$ under the canonical map $\pi: \text{GL}_n(K)\rightarrow S_n$.
\end{enumerate}
\end{lem}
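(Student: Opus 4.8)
The plan is to reduce everything to the commutativity of the square
\[
\begin{tikzcd}
\mathrm{Aut}(M) \arrow[r, "\iota"] \arrow[d, "\phi"] & \text{GL}_n(K) \arrow[d, "\pi"]\\
\mathrm{Aut}(M\otimes_K\mathbb{B}) \arrow[r, hook] & S_n
\end{tikzcd}
\]
where $\iota$ is the embedding of Lemma \ref{lemma: embedding of automorphisms into GL_n} attached to the fixed quasi-basis $x_1,\ldots,x_n$ of $M$, the bottom arrow is the embedding of $\mathrm{Aut}(M\otimes_K\mathbb{B})$ determined by its action on the quasi-basis $\bar x_1,\ldots,\bar x_n$ of $M\otimes_K\mathbb{B}$ (legitimate by Lemma \ref{lemma: basis extension lemma}), and $\pi$ is the canonical map of Proposition \ref{proposition: exact sequence R, GL, S_n}. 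Granting commutativity, both parts are formal. Using the standing identification of $\mathrm{Aut}(M)$ with $\iota(\mathrm{Aut}(M))\subseteq\text{GL}_n(K)$, an element $\psi$ lies in $\ker\phi$ iff $\pi(\iota(\psi)) = \mathrm{id}$, i.e.\ iff $\iota(\psi)\in\ker\pi = (K^\times)^n$, which is precisely $\ker\phi = \mathrm{Aut}(M)\cap(K^\times)^n$; and since the bottom arrow is injective, $\mathrm{im}\,\phi$ is identified with $\pi(\iota(\mathrm{Aut}(M)))$, the image of $\mathrm{Aut}(M)$ under $\pi$.

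To check commutativity I would take $\psi\in\mathrm{Aut}(M)$, write $\bar\psi := \phi(\psi)$, let $A = \iota(\psi)$ so that $\psi(x_i) = \sum_j A_{ji}x_j$, and set $\sigma = \pi(A)$. By Proposition \ref{proposition: exact sequence R, GL, S_n}, $A$ is the product of an invertible diagonal matrix with the permutation matrix of $\sigma$; concretely $A_{ji}\neq 0$ if and only if $j = \sigma(i)$. Applying the reduction homomorphism $K\to\mathbb{B}$, which sends every nonzero element to $1$ and hence kills the diagonal part of $A$, we get, for each $i$,
\[
\bar\psi(\bar x_i) = \overline{\psi(x_i)} = \sum_j \overline{A_{ji}}\,\bar x_j = \bar x_{\sigma(i)},
\]
so that $\bar\psi$, viewed inside $S_n$ via its action on $\{\bar x_1,\ldots,\bar x_n\}$, equals $\sigma = \pi(\iota(\psi))$. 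This is exactly the commutativity of the square.

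The step that genuinely uses the hypotheses — and the one I expect to be the main obstacle — is that this description of $\mathrm{Aut}(M\otimes_K\mathbb{B})$ as a subgroup of $S_n$ is legitimate, which rests on $\bar x_1,\ldots,\bar x_n$ being pairwise \emph{distinct}; this is precisely the content of Lemma \ref{lemma: basis extension lemma} (and ultimately why $K$ must be a zero-sum-free semifield). Distinctness is also what upgrades the easy containment $\ker\phi\supseteq\mathrm{Aut}(M)\cap(K^\times)^n$ to an equality: if $\bar x_{\sigma(i)} = \bar x_i$ for all $i$, then distinctness forces $\sigma = \mathrm{id}$, hence $\iota(\psi)\in\ker\pi$. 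The remaining points — that $\phi$ is a well-defined group homomorphism (functoriality of $-\otimes_K\mathbb{B}$) and that an invertible diagonal matrix over $K$ reduces to the identity matrix over $\mathbb{B}$ — are routine and I would dispatch them in a line each.
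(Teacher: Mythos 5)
Your proof is correct and takes essentially the same approach as the paper: part (2) of the statement is precisely the commutativity of the square you draw, and part (1) is forced by the distinctness of $\bar x_1,\ldots,\bar x_n$ from Lemma \ref{lemma: basis extension lemma}. The paper proves part (1) directly (via $q(x_i) = q(x_{\sigma(i)}) \Rightarrow \sigma = \mathrm{id}$) and then part (2) as $j\phi = \pi i$, whereas you establish commutativity first and read off both parts from it — a slightly tighter organization of the same argument.
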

\begin{proof}
(1): Let $x_1,\ldots,x_n$ be a quasi-basis of $M$.  Let $q: M\rightarrow M\otimes_K\mathbb{B}$ be the quotient map described in Lemma \ref{lemma: basis extension lemma}.  

Let $f\in \ker \phi$.  Then $q(x) = q(f(x))$ for all $x\in M$.  By viewing $f$ as an element of $\text{GL}_n(K)$ and utilizing the characterization of $\text{GL}_n(K)$ in the connected zero-sum-free setting, we see that for any $i$ there exist $\lambda_i\in K^\times$ and $\sigma(i)$ such that $f(x_i) = \lambda_i x_{\sigma(i)}$.  Then
\[
q(x_i) = q(\lambda_i x_{\sigma(i)}) = q(x_{\sigma(i)}).
\]

But, from Lemma \ref{lemma: basis extension lemma}, this implies $i = \sigma(i)$ for all $i$.  So $f(x_i) = \lambda_i x_i$ for all $i$, and $f$ is diagonal.

Conversely let $f\in \mathrm{Aut}(M)$ be diagonal, say $f(x_i) = \lambda_i x_i$ for each $i$.  Then
\[
q(f(x_i)) = q(\lambda_i x_i) = q(x_i),
\]
implying $q\circ f =q$. In particular, $f \in \ker \phi$.

(2): Let $i: \mathrm{Aut}(M)\rightarrow \text{GL}_n(K)$ and $j: \mathrm{Aut}(M\otimes_K \mathbb{B})\rightarrow S_n$ be the inclusion maps.  Then the claim reduces to showing $j\phi = \pi i$.

Fix a quasi-basis $x_1,\ldots, x_n$ for $M$.  Let $f\in \mathrm{Aut}(M)$.  Let $A = i(f)\in \text{GL}_n(K)$.  This satisfies
\begin{equation}f(x_i) = \sum_j A_{ji} x_j.
\end{equation}
We will use bars over elements of $M$ or $K$ to denote the corresponding elements of $M\otimes_K \mathbb{B}$ or $\mathbb{B}$. We will also use bars over elements of $\text{GL}_n(K)$ to denote the corresponding element of $\text{GL}_n(\mathbb{B})$. Then we have
\begin{equation}\overline{f(x_i)} = \sum_j \bar{A}_{ji} \bar{x}_j.
\end{equation}
Note that $\phi(f)$ is the map sending $\bar{x}_i$ to $\overline{f(x_i)}$, and so the above equation says that 
\[
j(\phi(f)) = \bar{A}\in \text{GL}_n(\mathbb{B}) = S_n.
\]
On the other hand, $A = i(f)$ implies $\bar{A} = \pi(i(f))$.  So $j\phi = \pi i$ as desired.
\end{proof}

In the case of finite groups, we have the following well-known vanishing theorem, which will be used to describe $\text{Aut}(M)$. For instance, see \cite[III.10.2]{brown2012cohomology}.

\begin{pro}\label{proposition: cohomology of representation}Let $G$ be a finite group.  Let $V$ be a (possibly infinite dimensional) representation of $G$ over a field of characteristic zero.  Then $H^k(G, V) = 0$ for $k \geq 1$.
\end{pro}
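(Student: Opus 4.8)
The plan is to reduce the proposition to the well-known fact that, for a finite group $G$, the positive-degree cohomology $H^k(G, M)$ is annihilated by $|G|$ for \emph{any} $G$-module $M$. Granting this, the statement follows immediately: since $V$ is a vector space over a field $F$ of characteristic zero, $|G|$ is a unit in $F$, so multiplication by $|G|$ is an invertible $F$-linear endomorphism of $V$; by functoriality (indeed additivity) of $H^k(G,-)$ in the module variable, it induces an automorphism of $H^k(G,V)$. But that automorphism \emph{is} multiplication by $|G|$ on $H^k(G,V)$, which is the zero map by the annihilation statement. An abelian group on which multiplication by an integer is simultaneously an isomorphism and the zero map must vanish, so $H^k(G,V) = 0$ for all $k \ge 1$.

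To justify that $|G|$ annihilates $H^k(G, M)$ for $k \ge 1$, I would invoke the transfer (corestriction) argument applied to the trivial subgroup $\{1\} \le G$. One has restriction and corestriction maps
\[
\mathrm{res}\colon H^k(G, M) \to H^k(\{1\}, M), \qquad \mathrm{cor}\colon H^k(\{1\}, M) \to H^k(G, M),
\]
whose composite $\mathrm{cor}\circ\mathrm{res}$ is multiplication by the index $[G:\{1\}] = |G|$ on $H^k(G,M)$. Since $\mathbb{Z}$ is a projective resolution of itself over the group ring $\mathbb{Z}[\{1\}] = \mathbb{Z}$, one has $H^k(\{1\}, M) = 0$ for every $k \ge 1$, so the composite is the zero map and multiplication by $|G|$ kills $H^k(G, M)$. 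This argument is insensitive to the (possibly infinite) dimension of $V$ and uses only that $|G|$ is invertible in $F$, which is exactly the characteristic-zero hypothesis.

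An alternative, more self-contained route avoids transfer by exhibiting an explicit contracting homotopy on the normalized bar cochain complex via averaging: given a $k$-cocycle $f\colon G^k \to V$, one defines $(hf)\colon G^{k-1}\to V$ by averaging over $G$ a suitably shifted copy of $f$, with the needed factor $\tfrac{1}{|G|}$ available by characteristic zero, and then checks that the bar differential satisfies $\partial(hf) = f$ whenever $f$ is a cocycle, so that every cocycle is a coboundary; the finite averaging sum again makes no use of finite-dimensionality. The only delicate point in either approach is the combinatorial bookkeeping — verifying $\mathrm{cor}\circ\mathrm{res} = |G|\cdot\mathrm{id}$, or the homotopy identity $\partial h + h\partial = \mathrm{id}$ up to the scalar $|G|$ — and this is the step I would expect to absorb most of the effort if a fully explicit proof is desired; in the present paper, citing the transfer argument or \cite[III.10.2]{brown2012cohomology} suffices.
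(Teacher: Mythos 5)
Your argument is correct, and it is essentially the same route the paper takes: the paper simply cites \cite[III.10.2]{brown2012cohomology}, and the proof behind that citation is exactly the transfer argument you give (restriction to the trivial subgroup followed by corestriction shows $|G|$ annihilates $H^k(G,V)$ for $k\ge 1$, while invertibility of $|G|$ in characteristic zero makes multiplication by $|G|$ an automorphism of $H^k(G,V)$, forcing vanishing). Both the transfer route and the averaging-homotopy alternative you sketch are standard and make no use of finite-dimensionality, so the proposal is complete as written.
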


A special class of subspaces of $\mathbb{R}^n$ will come up frequently, and hence we define them as follows. 

\begin{mydef}\label{definition: partition space}
Let $V\subseteq \mathbb{R}^n$ be a subspace. We say that $V$ is a \emph{partition subspace} if there is some equivalence relation $\sim$ on $[n] = \{1,\ldots, n\}$ such that $V = \{ x\in \mathbb{R}^n \mid x_i = x_j \textrm{ whenever }i\sim j\}$.
\end{mydef}

\begin{mydef}\label{definition: equivalent reps}
Let $G$ be a group. 
\begin{enumerate}
    \item 
Let $M$ be a weakly free module of rank $n$ over a zero-sum-free semifield $K$ and let $\alpha, \beta: G\rightarrow \mathrm{Aut}(M)$ be linear $G$-actions. We say that $\alpha, \beta$ are equivalent if there exists $\phi \in (K^\times)^n \cap \mathrm{Aut}(M)$ such that 
\[
\alpha(g) = \phi \beta(g) \phi^{-1}, \quad \forall g \in G.
\]
\item 
Similarly let $M\subseteq K^n$ be a $K$-linear space over an idempotent semifield $K$ and let $\alpha, \beta: G\rightarrow \mathrm{Aut}(V)$ be linear $G$-actions.  We say that $\alpha, \beta$ are equivalent if there exists $\phi \in (K^\times)^n \cap \mathrm{Aut}(V)$ such that $\alpha(g) = \phi \beta(g) \phi^{-1}$ for all $g\in G$.
\end{enumerate}
\end{mydef}

We note that equivalence of actions is stronger than isomorphism - we require that there exists a diagonal isomorphism preserving the actions.

Now, we state our main theorem in this section. To this end, suppose $M$ and $\psi$ are as in one of the following cases:
\begin{enumerate}[label=(\alph*)]
    \item 
Let $M$ be a weakly free module of rank $n$ over $\T$ such that $M$ is finitely presented and $M$ can be embedded into a free $\T$-module of finite rank.  Let $\psi: \mathrm{Aut}(M)\rightarrow S_n$ be the map describing the action of automorphisms on weak basis lines. 
    \item 
Let $M$ be a weakly free module of rank $n$ over $\mathbb{R}_{\geq 0}$.  Let $\psi: \mathrm{Aut}(M)\rightarrow S_n$ be the map describing the action of automorphisms on weak basis lines.
    \item 
Let $M\subseteq \T^n$ be a $\mathbb{T}$-linear space such that $M$ is finitely generated and can be written as an intersection of only finitely many $\mathbb{T}$-hyperplanes.  Let $\psi: \mathrm{Aut}(M)\rightarrow S_n$ be the composition of the inclusion into $\text{GL}_n(\T)$ with the map $\text{GL}_n(\T)\rightarrow S_n$.

\end{enumerate}

In each of the above cases, we let $H$ be the image of $\psi$.

\begin{mythm}\label{theorem: automorphism group as a semidirect product}
With the notation as above, we have the following. 
\begin{enumerate}
    \item 
$\mathrm{Aut}(M) \cong H\ltimes V$ for some partition subspace $V\subseteq \mathbb{R}^n$. Moreover, the action of $H$ on $V$ is the permutation action induced by the action of $S_n$ on $\mathbb{R}^n$.\footnote{Note that part of the claim is that $V\subseteq \mathbb{R}^n$ is closed under the permutation action of $H$ on $\mathbb{R}^n$.}  
\item
If $G$ is a finite group, then composition with $\pi: \mathrm{Aut}(M)\rightarrow H$ (where $\pi$ is the corestriction of $\psi$) yields a one-to-one correspondence between equivalence classes (as in Definition \ref{definition: equivalent reps}) of linear $G$-actions on $M$ and homomorphisms $G\rightarrow H$.
\end{enumerate}
\end{mythm}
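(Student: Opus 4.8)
The strategy is to recognize $\psi$ as exhibiting $\mathrm{Aut}(M)$ as a group extension
\[
1 \longrightarrow \ker\psi \longrightarrow \mathrm{Aut}(M) \xrightarrow{\ \psi\ } H \longrightarrow 1,
\]
to show $\ker\psi$ is a partition subspace, and then to kill the relevant group cohomology. The first task is to identify $\ker\psi$. In each of the three cases $\mathrm{Aut}(M)$ is realized as a subgroup of $\mathrm{GL}_n(K)$ — via Lemma \ref{lemma: embedding of automorphisms into GL_n} in cases (a), (b) and by hypothesis in case (c) — and $\psi$ is the restriction of the canonical map $\pi\colon \mathrm{GL}_n(K)\to S_n$, so that $\ker\psi = \mathrm{Aut}(M)\cap (K^\times)^n$. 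By Lemma \ref{lemma: Automorphism groups of weakly free modules are linear subgroups} in cases (a), (b) and Lemma \ref{lemma: Automorphism groups of tropical spaces are linear subgroups} in case (c), $\mathrm{Aut}(M)$ is a linear subgroup of $\mathrm{GL}_n(K)$, with a \emph{finite} defining system of equations in cases (a) and (c) by the finite-presentation/finite-generation hypotheses; hence $\ker\psi$ is a linear subgroup of $(K^\times)^n$ by Remark \ref{rmk: linear subgroup}. Applying Proposition \ref{proposition: tropical index} when $K=\T$ (cases (a), (c)) and Proposition \ref{proposition: nonnegative index} when $K=\mathbb{R}_{\geq 0}$ (case (b), after identifying $(\mathbb{R}_{>0})^n$ with $\mathbb{R}^n$ by $\log$), we conclude $\ker\psi = V$ for some partition subspace $V\subseteq\mathbb{R}^n$.

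Next I would compute the conjugation action of $\mathrm{Aut}(M)$ on the normal subgroup $V$. Writing an arbitrary element of $\mathrm{GL}_n(K)$ as a permutation matrix times a diagonal matrix and using that diagonal matrices commute, conjugation of a diagonal matrix by $A$ depends only on $\pi(A)$ and is precisely the coordinate-permutation action; since $V$ is normal in $\mathrm{Aut}(M)$ it is stable under this action, which settles the stability footnote. Thus $V$ is a representation of the finite group $H$ over $\mathbb{R}$, a field of characteristic zero, so $H^2(H, V) = 0$ by Proposition \ref{proposition: cohomology of representation}. By Proposition \ref{proposition: H^2 and extensions}, the extension above is then isomorphic to the semidirect product, i.e.\ $\mathrm{Aut}(M) \cong H\ltimes V$ with $H$ acting by the permutation action, proving part (1).

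For part (2), fix a homomorphism $h\colon G\to H$. The section $s\colon H\to\mathrm{Aut}(M)$ coming from the splitting in part (1) makes $s\circ h$ a linear $G$-action lifting $h$, so the map $\rho\mapsto\pi\rho$ (with $\pi$ the corestriction of $\psi$) is surjective onto $\Hom(G,H)$; it is well defined since $\pi$ kills $V$. For injectivity, observe that the equivalence of $G$-actions in Definition \ref{definition: equivalent reps} — conjugation by an element of $(K^\times)^n\cap\mathrm{Aut}(M) = V$ — is exactly the equivalence of Lemma \ref{lemma: cohomological classification of homomorphisms} for the above extension. That lemma puts the equivalence classes of lifts of a fixed $h$ in bijection with $H^1(G, V)$, where $V$ carries the $G$-action induced through $h$; since $G$ is finite and $V$ is a characteristic-zero representation, $H^1(G, V) = 0$ by Proposition \ref{proposition: cohomology of representation}. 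Hence each fiber of $\rho\mapsto\pi\rho$ is a single equivalence class, which gives the asserted one-to-one correspondence.

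The step requiring the most care is the identification of $\ker\psi$ in the first paragraph: one must check uniformly across the three cases that $\ker\psi = \mathrm{Aut}(M)\cap(K^\times)^n$, that the hypotheses of Lemmas \ref{lemma: Automorphism groups of weakly free modules are linear subgroups}/\ref{lemma: Automorphism groups of tropical spaces are linear subgroups} are met so as to produce a defining system of linear equations (finite where Proposition \ref{proposition: tropical index} demands it), and that the torus-structure results Propositions \ref{proposition: tropical index} and \ref{proposition: nonnegative index} genuinely apply — including the $\log$-identification needed over $\mathbb{R}_{\geq 0}$. Once $\ker\psi$ is known to be a partition subspace, the cohomological input from Propositions \ref{proposition: H^2 and extensions}, \ref{proposition: cohomology of representation} and Lemma \ref{lemma: cohomological classification of homomorphisms} finishes both parts without further computation.
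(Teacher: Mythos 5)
Your proof is correct and follows essentially the same route as the paper: identify $\ker\psi = \mathrm{Aut}(M)\cap(K^\times)^n$ via Lemma \ref{lemma: Decomposition of quasi-free automorphisms}, show it is a partition subspace using Lemmas \ref{lemma: Automorphism groups of weakly free modules are linear subgroups}/\ref{lemma: Automorphism groups of tropical spaces are linear subgroups} plus Propositions \ref{proposition: tropical index}/\ref{proposition: nonnegative index}, and then invoke the vanishing of $H^1$ and $H^2$ (Proposition \ref{proposition: cohomology of representation}) together with Proposition \ref{proposition: H^2 and extensions} and Lemma \ref{lemma: cohomological classification of homomorphisms} to split the extension and classify the actions. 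You even spell out a couple of points the paper only hints at (the conjugation computation showing the $H$-action on $V$ is the coordinate permutation, and the $\log$ identification over $\mathbb{R}_{\geq 0}$), but the argument is the same.
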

\begin{proof}
(1): We first prove the cases (a) and (c). Consider the following:
\[
V=(\mathbb{T}^\times)^n \cap (\mathrm{Aut}(M))=\mathbb{R}^n \cap (\mathrm{Aut}(M)).
\]
By Lemma \ref{lemma: Decomposition of quasi-free automorphisms}, we have a short exact sequence
\begin{equation}
1 \rightarrow V \rightarrow \mathrm{Aut}(M) \rightarrow H \rightarrow 1,
\end{equation}
and the action of $H$ on $V$ is the permutation action.\footnote{$H$ is a subgroup of $S_n$.} By Lemma \ref{lemma: Automorphism groups of weakly free modules are linear subgroups} for the case (a) or Lemma \ref{lemma: Automorphism groups of tropical spaces are linear subgroups} for the case (c), $V$ is a linear subgroup. Moreover, by Proposition \ref{proposition: tropical index}, we see that $V$ is isomorphic to a partition subspace of $\mathbb{R}^n$. 

For the second claim, by Proposition \ref{proposition: cohomology of representation}, we have
\[
H^i(H, V) = 0, \quad \forall i >0, 
\]
in particular, we have $H^2(H, V) = 0$. By Proposition \ref{proposition: H^2 and extensions}, we have that
\[
\mathrm{Aut}(M) \cong H\ltimes V.
\]

In the case (b), the argument is similar with $V=(\mathbb{R}_{\geq 0}^\times)^n \cap (\mathrm{Aut}(M))$, Lemma \ref{lemma: Automorphism groups of weakly free modules are linear subgroups}, and Proposition \ref{proposition: nonnegative index}. Note that we see $V$ as a real vector space by considering an isomorphism between $(\mathbb{R}_{\geq 0}^\times, \cdot)$ and $(\mathbb{R},+)$ via the logarithm map.

(2): There is a canonical map $\eta$ from the set of equivalence classes of linear $G$-actions on $M$ to homomorphisms $G\rightarrow H$, given by $\eta(\alpha) = \pi \circ \alpha$.  To see this is well-defined, suppose $\alpha$ and $\beta$ are equivalent, so there exists $\phi \in V = (K^\times)^n \cap \mathrm{Aut}(M)$ such that $\alpha(g) = \phi \beta(g) \phi^{-1}$.  Since $\phi\in V = \ker \pi$, this yields $\pi(\alpha(g)) = \pi(\beta(g))$ so $\eta(\alpha) = \eta(\beta)$ as desired.

To see that $\eta$ is injective, suppose that $\eta(\alpha)=\eta(\beta)$, i.e., $\pi\circ \alpha = \pi \circ \beta$. Let's fix $\alpha$. By Lemma \ref{lemma: cohomological classification of homomorphisms}, there is a bijection between the set of equivalence classes of homomorphisms $\beta: G \to \text{Aut}(M)$ with $\pi\circ \alpha=\pi\circ \beta$ and $H^1(G,V)$. However, by Proposition \ref{proposition: cohomology of representation}, $H^1(G, V) = 0$, and hence any such $\beta$ is equivalent to $\alpha$, showing that $\eta$ is injective.

Now, by Proposition \ref{proposition: H^2 and extensions}, $\pi: \mathrm{Aut}(M)\rightarrow H$ splits. Let $\iota: H\rightarrow \mathrm{Aut}(M)$ be a splitting of $\pi$.  Then for each homomorphism $\phi: G\rightarrow H$, we obtain an action $\iota\phi:G \rightarrow \mathrm{Aut}(M)$.  Since $\eta(\iota\phi) = \pi\iota\phi = \phi$, $\eta$ is surjective.
\end{proof}

 By combining Lemma \ref{lemma: Base change of quasi-free module to B version 2} with part (a) or (b), we obtain the following. 

\begin{cor}\label{cor: (d)}
Let $M$ be as in part (a) or (b).  Suppose in addition that $M$ is quasi-free.  Let $\psi: \mathrm{Aut}(M)\rightarrow\mathrm{Aut}(M\otimes_K \mathbb{B})$. Then, the same conclusions as in Theorem \ref{theorem: automorphism group as a semidirect product} hold. 
\end{cor}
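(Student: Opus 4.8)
The plan is to deduce this directly from Theorem~\ref{theorem: automorphism group as a semidirect product} by showing that the map $\psi$ appearing here agrees, up to a canonical identification, with the map $\psi$ used in cases (a) and (b) of that theorem. Since $M$ is quasi-free of rank $n$ over the zero-sum-free semifield $K$, Lemma~\ref{lemma: basis extension lemma} shows $M\otimes_K\mathbb{B}$ is quasi-free of rank $n$; hence by Lemma~\ref{lemma: embedding of automorphisms into GL_n} it carries a canonical embedding $j\colon\mathrm{Aut}(M\otimes_K\mathbb{B})\hookrightarrow\text{GL}_n(\mathbb{B})=S_n$, and we may view $j\circ\psi$ as a homomorphism $\mathrm{Aut}(M)\to S_n$.

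The key point is Lemma~\ref{lemma: Base change of quasi-free module to B version 2}. Fixing a quasi-basis $x_1,\dots,x_n$ of $M$ and the resulting embedding $\iota\colon\mathrm{Aut}(M)\hookrightarrow\text{GL}_n(K)$, part~(2) of that lemma gives $j\circ\psi=\pi\circ\iota$, where $\pi\colon\text{GL}_n(K)\to S_n$ is the canonical map. Because $M$ is quasi-free over a connected zero-sum-free semifield, its weak basis lines are exactly the lines spanned by the quasi-basis vectors, so $\pi\circ\iota$ is precisely the map describing the action of automorphisms on weak basis lines used in the theorem. Hence the image of $\psi$, viewed inside $S_n$, is the same subgroup $H$ as in Theorem~\ref{theorem: automorphism group as a semidirect product}, and by part~(1) of Lemma~\ref{lemma: Base change of quasi-free module to B version 2} its kernel is $V:=\mathrm{Aut}(M)\cap(K^\times)^n$, which is the same $V$ as in the theorem.

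With these identifications in place, there is nothing left to prove: the short exact sequence $1\to V\to\mathrm{Aut}(M)\to H\to 1$, the fact that $V$ is a partition subspace, the permutation action of $H$ on $V$, the splitting furnished by $H^2(H,V)=0$, and, when $G$ is finite, the bijection between equivalence classes of linear $G$-actions and $\Hom(G,H)$ coming from $H^1(G,V)=0$ are exactly what Theorem~\ref{theorem: automorphism group as a semidirect product} and its proof deliver, now with $H$ realized inside $\mathrm{Aut}(M\otimes_K\mathbb{B})$ via $j$. The only point requiring a moment's care is that the notion of equivalence of $G$-actions from Definition~\ref{definition: equivalent reps}, phrased via conjugation by elements of $(K^\times)^n\cap\mathrm{Aut}(M)$, coincides with conjugation by $\ker\psi$; this is immediate once $\ker\psi=V$ has been checked. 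I do not anticipate any genuine obstacle beyond this bookkeeping.
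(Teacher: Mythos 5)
Your proof is correct and follows essentially the same route as the paper, which simply cites Lemma~\ref{lemma: Base change of quasi-free module to B version 2} together with parts~(a)/(b) of Theorem~\ref{theorem: automorphism group as a semidirect product}; you have merely spelled out the identifications ($j\circ\psi=\pi\circ\iota$ and $\ker\psi=V$) that make the reduction work.
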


Let $M$ be a polyhedral cone that contains no lines and let $W$ be the vector space it spans.  Let $n$ be the number of extreme rays of $M$. Call a permutation $\sigma$ of the extreme rays of $M$ \emph{realizable} if there is some invertible map $T: W\rightarrow W$ such that $T$ maps the $i$th ray to the $\sigma(i)$th ray for all $i$.

Let $G$ be a finite group.  Call two linear $G$-actions on $W$ which preserve $M$ \emph{equivalent} if there is an isomorphism $T: W \rightarrow W$ of representations such that any vector pointing along an extreme ray of $M$ is an eigenvector of $T$. 

We note that the semidirect product part of the following corollary already appears as the main result of \cite{horne1978automorphism}.

\begin{cor}\label{corollary: cone case}
With the same notation as above, let $H$ be the group of realizable permutations of the extreme rays of $M$.  Then the following hold.
\begin{enumerate}
    \item 
    There is a partition subspace $V\subseteq \mathbb{R}^n$ such that the group of invertible linear maps $T: W \rightarrow W$ which preserve $M$ (meaning $T(M) = M$) is isomorphic to $H \ltimes V$ where $H$ acts on $V$ via the permutation action.
    \item 
  There is a one-to-one correspondence between equivalence classes of linear $G$-actions on $W$ which preserve $M$ and homomorphisms $G\rightarrow H$.
\end{enumerate}
\end{cor}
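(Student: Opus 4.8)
The plan is to deduce the corollary from Theorem~\ref{theorem: automorphism group as a semidirect product} in case (b), after translating the geometric objects into module-theoretic ones. As shown after Lemma~\ref{lemma: conic Krein-Milman}, the cone $M$ (which contains no line) is a finitely generated quasi-free $\mathbb{R}_{\geq 0}$-module, with a quasi-basis $x_1,\ldots,x_n$ obtained by picking one nonzero vector $x_i$ from each extreme ray $\rho_i$. By Lemma~\ref{lemma: quasi-free implies weakly free}, $M$ is also weakly free, the $x_i$ form a weak basis, and the weak basis lines are exactly the extreme rays $\rho_i = \mathbb{R}_{\geq 0}x_i$. So $M$ fits into case (b) of the setup preceding Theorem~\ref{theorem: automorphism group as a semidirect product}, and the map $\psi:\mathrm{Aut}(M)\to S_n$ there is the action of module automorphisms on the extreme rays.

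The first translation is the identification $\mathrm{Aut}_{\mathbb{R}_{\geq 0}}(M)\cong \mathcal{G}:=\{T\in GL(W)\mid T(M)=M\}$. Since $M$ spans $W$ we have $W=M-M$, so every $\mathbb{R}_{\geq 0}$-linear endomorphism $\phi$ of $M$ extends uniquely to an $\mathbb{R}$-linear endomorphism $\widetilde\phi$ of $W$ by $\widetilde\phi(m-m')=\phi(m)-\phi(m')$ (well-definedness and $\mathbb{R}$-linearity are routine); applying this to $\phi$ and $\phi^{-1}$ shows $\widetilde\phi\in\mathcal{G}$, and restriction is the inverse assignment. Next I identify $H:=\psi(\mathrm{Aut}(M))$ with the group of realizable permutations of the extreme rays. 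The inclusion $\subseteq$ is clear, since every module automorphism preserves $M$ and hence permutes its extreme rays. For $\supseteq$, if $\sigma$ is realized by an invertible $T:W\to W$ sending $\rho_i$ to $\rho_{\sigma(i)}$, then $T$ carries the set of extreme rays of $M$ bijectively onto itself, so by Lemma~\ref{lemma: conic Krein-Milman} (which writes $M$ as the conic hull of its extreme rays) we get $T(M)=M$; thus $T\in\mathcal{G}$, the corresponding module automorphism lies in $\mathrm{Aut}(M)$, and $\psi$ sends it to $\sigma$. With these identifications, part (1) is immediate from Theorem~\ref{theorem: automorphism group as a semidirect product}(1) in case (b): $\mathcal{G}\cong\mathrm{Aut}(M)\cong H\ltimes V$ for a partition subspace $V\subseteq\mathbb{R}^n$, with $H$ acting by the permutation action.

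For part (2), a linear $G$-action on $W$ preserving $M$ is exactly a homomorphism $G\to\mathcal{G}$, i.e.\ (via the above identification) the same data as a linear $G$-action on the $\mathbb{R}_{\geq 0}$-module $M$. It remains to match the notion of equivalence in the corollary with the one in Definition~\ref{definition: equivalent reps}: a linear isomorphism $T:W\to W$ intertwining two such actions and having every vector along an extreme ray as an eigenvector is a diagonal map $T x_i=\lambda_i x_i$ ($\lambda_i\neq 0$), and conversely. The one point needing a short argument is that a priori $\lambda_i$ could be negative: automorphisms of $M$ act on the extreme rays by \emph{positive} scalars (Proposition~\ref{proposition: exact sequence R, GL, S_n}), so the intertwining relation forces every relevant ratio $\lambda_{\sigma(i)}/\lambda_i$ to be positive, and one checks directly that the positive-diagonal map $T'$ with $T'x_i=|\lambda_i|x_i$ still intertwines the two actions; since $T'\in(\mathbb{R}_{\geq 0}^\times)^n\cap\mathrm{Aut}(M)$, the actions are equivalent in the sense of Definition~\ref{definition: equivalent reps}. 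The converse inclusion is obvious. Hence the two equivalence relations agree, and part (2) follows from Theorem~\ref{theorem: automorphism group as a semidirect product}(2).

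I expect the argument to be essentially bookkeeping; the only steps that are not purely formal are the extension/restriction identification $\mathrm{Aut}_{\mathbb{R}_{\geq 0}}(M)\cong\mathcal{G}$ and the matching of the two equivalence relations, where the possibility of negative eigenvalues of a diagonal intertwiner must be dispensed with by passing to absolute values as above. This last point is the main thing a careful reader would want spelled out; everything else is a direct appeal to Theorem~\ref{theorem: automorphism group as a semidirect product}(b) and Lemma~\ref{lemma: conic Krein-Milman}.
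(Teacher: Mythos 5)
Your argument follows the same outline as the paper's: identify $M$ as a quasi-free (hence weakly free) $\mathbb{R}_{\geq 0}$-module with the extreme rays as weak basis lines, identify $\mathrm{Aut}(M)$ with $\mathcal{G}=\{T\in GL(W)\mid T(M)=M\}$, identify $\psi(\mathrm{Aut}(M))$ with the realizable permutations, and then invoke Theorem~\ref{theorem: automorphism group as a semidirect product}(b). Your handling of part~(2) is actually \emph{more} careful than the paper's: you correctly notice that the corollary's notion of equivalence allows the diagonal intertwiner to have negative eigenvalues, so it does not a priori land in $(\mathbb{R}_{\geq 0}^\times)^n\cap\mathrm{Aut}(M)$ as required by Definition~\ref{definition: equivalent reps}, and your observation that the ratios $\lambda_{\sigma(i)}/\lambda_i$ are forced to be positive (because automorphisms act on extreme rays by positive scalars) and hence replacing $\lambda_i$ by $|\lambda_i|$ still intertwines, is the correct fix. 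The paper asserts the matching of the two equivalence notions without spelling this out.

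One small omission worth flagging: the paper's proof explicitly embeds $M$ into $\mathbb{R}_{\geq 0}^m$ by pairing with the extreme rays of the dual cone. Although case~(b) of the theorem as \emph{stated} imposes no embedding hypothesis, its proof rests on Lemma~\ref{lemma: Automorphism groups of weakly free modules are linear subgroups}, which does require an embedding into a free module; the dual-cone construction is what supplies this for a line-free polyhedral cone. You should include this step (or at least cite it) so that the appeal to Theorem~\ref{theorem: automorphism group as a semidirect product}(b) is watertight against the hypotheses actually used in its proof.
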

\begin{proof}
We first note that $M$ is quasi-free; a set consisting of one vector from each extreme ray forms a quasi-basis. In particular, by Lemma \ref{lemma: quasi-basis permute}, $M$ is weakly free.
Moreover, we can embed $M$ into $\mathbb{R}_{\geq 0}^m$ by taking the dot product with each extreme ray of the dual cone. The definition of dual cone makes this non-negative, and the dual cone spans the dual space since $M$ contains no lines.

Now, since $M$ is quasi-free, we can apply Theorem \ref{theorem: automorphism group as a semidirect product} to $M$. 

To be precise, for (1), in Theorem \ref{theorem: automorphism group as a semidirect product} (in the case (b)), the image of $\psi:\text{Aut}(M) \to S_n$ is precisely the group of realizable permutations of the extreme rays of $M$. Also, the group of invertible linear maps $T:W \to W$ which preserves $M$ is precisely $\text{Aut}(M)$.

For (2), once we fix a quasi-basis consisting of extreme rays, $T$ is equivalent to $T'$ if and only if they are conjugate by a diagonal matrix. Now, under the identification between $\text{Aut}(M)$ and the group of invertible linear maps $T:W \to W$ which preserves $M$, this is equivalent to Definition \ref{definition: equivalent reps}, and hence Theorem \ref{theorem: automorphism group as a semidirect product} applies to obtain the desired result. 
\end{proof}

\begin{cor}
With the same notation as above, the action on $M$ is determined by the action on $M\otimes_{\mathbb{R}_{\geq 0}}\mathbb{B}$.
\end{cor}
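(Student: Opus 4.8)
The plan is to read this off from Corollary \ref{cor: (d)} applied to the cone $M$, together with the bookkeeping in Lemma \ref{lemma: Base change of quasi-free module to B version 2}. Recall from the proof of Corollary \ref{corollary: cone case} that, choosing one nonzero vector from each of the $n$ extreme rays of $M$, this set is a quasi-basis, so $M$ is a quasi-free (hence, by Lemma \ref{lemma: quasi-free implies weakly free}, weakly free) $\mathbb{R}_{\geq 0}$-module of rank $n$ which embeds into some $\mathbb{R}_{\geq 0}^m$. Thus $M$ is of the form considered in part (b) and is quasi-free, so Corollary \ref{cor: (d)} applies with $\psi\colon \mathrm{Aut}(M)\to \mathrm{Aut}(M\otimes_{\mathbb{R}_{\geq 0}}\mathbb{B})$ the base-change map; write $H$ for its image.

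First I would unwind what "the action on $M\otimes_{\mathbb{R}_{\geq 0}}\mathbb{B}$" means. Identifying $\mathrm{Aut}(M)$ with the group of invertible linear maps $T\colon W\to W$ preserving $M$, a linear $G$-action on $W$ preserving $M$ is exactly a homomorphism $\alpha\colon G\to \mathrm{Aut}(M)$, and the induced action on $M\otimes_{\mathbb{R}_{\geq 0}}\mathbb{B}$ is the composite $\psi\circ\alpha\colon G\to \mathrm{Aut}(M\otimes_{\mathbb{R}_{\geq 0}}\mathbb{B})$. Since $\mathbb{B}^\times$ is trivial, conjugation by $(\mathbb{B}^\times)^n$ is trivial, so equivalence of $G$-actions on the $\mathbb{B}$-module $M\otimes_{\mathbb{R}_{\geq 0}}\mathbb{B}$ is just equality of homomorphisms into $\mathrm{Aut}(M\otimes_{\mathbb{R}_{\geq 0}}\mathbb{B})\subseteq S_n$. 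So the assertion to prove is that $\alpha\mapsto \psi\circ\alpha$ is injective on equivalence classes of linear $G$-actions on $M$.

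Then I would invoke the conclusions of Corollary \ref{cor: (d)}, i.e.\ those of Theorem \ref{theorem: automorphism group as a semidirect product}(2): composition with the corestriction $\pi\colon \mathrm{Aut}(M)\to H$ of $\psi$ gives a bijection between equivalence classes of linear $G$-actions on $M$ and homomorphisms $G\to H$. If $\alpha,\beta$ are linear $G$-actions on $W$ preserving $M$ with $\psi\circ\alpha=\psi\circ\beta$, then corestricting to $H$ gives $\pi\circ\alpha=\pi\circ\beta$, hence $\alpha$ and $\beta$ lie in the same equivalence class; under the identification of Corollary \ref{corollary: cone case} this equivalence is precisely conjugacy by an invertible $T\colon W\to W$ having each extreme-ray direction as an eigenvector. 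This establishes that the $G$-action on $M$ is determined up to equivalence by the $G$-action on $M\otimes_{\mathbb{R}_{\geq 0}}\mathbb{B}$.

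There is no genuinely hard step: the content is entirely in matching three descriptions of the same homomorphism — the permutation map on weak basis lines from Theorem \ref{theorem: automorphism group as a semidirect product}, the base-change map $\mathrm{Aut}(M)\to\mathrm{Aut}(M\otimes_{\mathbb{R}_{\geq 0}}\mathbb{B})$, and $\pi\colon \mathrm{GL}_n(\mathbb{R}_{\geq 0})\to S_n$ — and this identification is exactly Lemma \ref{lemma: Base change of quasi-free module to B version 2}, so that is the one place to be careful. One should also flag where finiteness of $G$ is used: it enters only through the vanishing $H^1(G,V)=0$ of Proposition \ref{proposition: cohomology of representation}, which is what makes $\pi\circ\alpha$ a complete invariant of the equivalence class of $\alpha$.
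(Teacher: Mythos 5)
Your proof is correct and takes essentially the same approach as the paper. The paper's proof is a one-liner citing Lemma \ref{lemma: Base change of quasi-free module to B version 2} and Corollary \ref{corollary: cone case}, while you route the argument through Corollary \ref{cor: (d)} (which itself packages Lemma \ref{lemma: Base change of quasi-free module to B version 2} together with Theorem \ref{theorem: automorphism group as a semidirect product}); the substance --- identifying the base-change map $\mathrm{Aut}(M)\to\mathrm{Aut}(M\otimes_{\mathbb{R}_{\geq 0}}\mathbb{B})$ with the corestriction $\pi$ and invoking the bijection with homomorphisms $G\to H$ --- is the same, and your observation that equivalence over $\mathbb{B}$ reduces to equality because $\mathbb{B}^\times$ is trivial is the correct way to unwind what ``determined'' means.
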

\begin{proof}
This directly follows from Lemma \ref{lemma: Base change of quasi-free module to B version 2}  and Corollary \ref{corollary: cone case}.
\end{proof}

\section{Valuated matroidal representations of groups}\label{section: valuated matroids}

In this section we will use the (equivalent) definition of tropical linear spaces given in \cite{frenk2013tropical}. We note that \cite{frenk2013tropical} uses the minimum convention whereas we use the maximum convention, but both conventions produce isomorphic semifields. We will assume that all matroids are simple unless otherwise stated. 

Let $(M,w)$ be a valuated matroid on $[n]$ with a valuation $w:\binom{[n]}{d} \to \mathbb{T}$, where $d$ is the rank of the underlying matroid $\underline{M}$ of $M$. One can think of $\underline{M}$ as a valuated matroid with the valuation $\pi\circ w$, where $\pi:\mathbb{T} \to \mathbb{B}$ sends any nonzero element of $\mathbb{T}$ to $1_\mathbb{B}$ and $0_\mathbb{T}$ to $0_\mathbb{B}$. We denote by $\mathcal{B}$ the set of bases of $M$. In this section, we will often write $M$ instead of $(M,w)$ whenever this does not create an ambiguity.

Suppose that $\text{rk}(M)=d$. For each $I \subseteq [n]$, with $|I|=d+1$, let $V_I(w)$ be the tropical hyperplane defined by the following linear equation:
\begin{equation}\label{eq: tropical linear space hyperplane}
\sum_{j \in I} w(I - j)x_j,
\end{equation}
where $w(I-j)=0_{\TT}$ if $I -j \not \in \mathcal{B}$. Now, we define
\[
V_M:=\bigcap_{I}V_I(w).
\]
$V_M$ is the tropical linear space associated to a valued matroid $(M,w)$. 

By a \textit{weak automorphism} of a valuated matroid on $[n]$, we mean an element $\sigma \in S_n$ such that there exists a map $\tau:[n] \to \mathbb{T}$ such that for each $B \in \mathcal{B}$, one has
\[
w(\sigma(B))=(\prod_{i \in B}\tau(i))w(B).
\]

\begin{rmk}
In \cite{dress1992valuated} a notion of projective equivalence between two valuated matroids with the same underlying set $E$ was introduced as follows. 

Let $\underline{M}$ be a matroid on $E=[n]$ of rank $d$. Two valuations $w, w'$ of $\underline{M}$ are said to be \emph{projectively equivalent} if there exist some $\alpha \in \mathbb{R}=\mathbb{T}^\times$ and a map $\tau:E \to \mathbb{T}$ such that for each basis $B=\{b_1,b_2,\dots,b_d\}$ of $\underline{M}$, one has
\[
w'(B)=\alpha\cdot \prod_{i=1}^d\tau(b_i) w(B).
\]
This clearly defines an equivalence relation on the set of valuated matroid structures on $\underline{M}$. In other words, we can equivalently define a weak automorphism of a valuated matroid $(M,w)$ to be an element $\sigma \in S_n$ such that $w \circ \sigma$ and $w$ are projectively equivalent. 
\end{rmk}

Let $(M,w)$ be a valuated matroid on $[n]$ and $V_M$ be the corresponding tropical linear space in $\mathbb{T}^n$. 

In the following by the underlying module of $V_M$, we mean simply $V_M$ as a module in contrast to $V_M$ as a tropical linear space which we view as a tuple of $(V_M,\mathbb{T}^n)$, where $V_M \subseteq \mathbb{T}^n$. 

In the following Proposition~\ref{prop: V_M} we recall results about tropical linear spaces, combining \cite[Lemma 4.1.4]{frenk2013tropical} and \cite[Corollary 4.1.10]{frenk2013tropical}. Recall that a hyperplane of a matroid is a flat whose rank is 1 less than the rank of the matroid. 

\begin{pro}\label{prop: V_M}
\label{pro: Generators of tropical linear space} Let $M$ be a valuated matroid on $[n]$ and $V_M\subseteq\T^n$ be the corresponding tropical linear space as defined above.
For each independent set $I$ of corank $1$, let $v_I\in \T^n$ be given by 
\begin{equation}\label{eq: valuation}
(v_I)_i = 
\begin{cases}
w(I+i), \quad \textrm{if $i \not \in I$},\\
0_\mathbb{T}, \quad \quad \quad  \ \  \textrm{if $i \in I$.}
\end{cases}
\end{equation}
\begin{enumerate}
    \item 
$V_M$ is generated as a $\T$-module by vectors of the form $v_I$ where $I$ ranges over corank $1$ independent sets.
    \item 
The support of $v_I$ is the complement of the hyperplane spanned by $I$.
    \item 
Let $\mathcal{J}$ be a collection of corank $1$ independent sets.  For any corank $1$ independent set $I$, we have $v_I \in \mathrm{span}(\{v_J \mid J\in \mathcal{J}\})$ if and only if $I$ spans the same hyperplane as some $J\in \mathcal{J}$.  In particular $v_I$ is a scalar multiple of $v_J$ if and only if $I$ and $J$ span the same hyperplane.
\end{enumerate}
\end{pro}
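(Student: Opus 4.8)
The plan is to handle the three parts in the order (2), (1), (3): part (2) is a direct unwinding of \eqref{eq: valuation}, part (1) is the substantive statement and will be reduced to \cite[Lemma 4.1.4]{frenk2013tropical}, and part (3) then follows from (2) together with the Grassmann--Plücker relations of the valuated matroid (this is essentially \cite[Corollary 4.1.10]{frenk2013tropical}).

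For part (2): since $I$ is an independent set of corank $1$, its closure $\mathrm{cl}(I)$ is a flat of rank $d-1$, that is, a hyperplane of $\underline{M}$, and this is the hyperplane ``spanned by $I$.'' By \eqref{eq: valuation}, $(v_I)_i = 0_\T$ for $i\in I$, while for $i\notin I$ we have $(v_I)_i = w(I+i)$, which equals $0_\T$ exactly when $I+i$ is not a basis, i.e.\ when $\mathrm{rk}(I+i)=d-1$, i.e.\ when $i\in\mathrm{cl}(I)$. Hence the support of $v_I$ is $[n]\setminus\mathrm{cl}(I)$.

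For part (1), the first step is to check that each $v_I$ lies in $V_M$, i.e.\ that $v_I$ tropically satisfies the linear form $\sum_{j\in J}w(J-j)x_j$ of \eqref{eq: tropical linear space hyperplane} for every $J$ with $|J|=d+1$. After substituting $x=v_I$, the terms with $j\in I\cap J$ are $0_\T$, so the value of the form is $\max_{j\in J\setminus I}\bigl(w(J-j)+w(I+j)\bigr)$ (and $|J\setminus I|\ge 2$ always); the assertion that this maximum is attained at least twice, or is $0_\T$, is exactly the multiterm Grassmann--Plücker relation for the pair $(I,J)$ with $|I|=d-1$ and $|J|=d+1$. Since $V_M$ is a submodule of $\T^n$, this gives $\mathrm{span}(\{v_I\})\subseteq V_M$. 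The reverse inclusion --- that the cocircuit vectors $v_I$ generate \emph{all} of $V_M$ --- is the one genuinely nontrivial point; it is \cite[Lemma 4.1.4]{frenk2013tropical} (after translating between the min- and max-conventions mentioned at the start of this section), the idea being that every $x\in V_M$ can be written as an explicit tropical combination of the $v_I$, which one verifies by a support-and-dimension argument. I expect this reverse inclusion to be the main obstacle, and I would simply cite Frenk rather than reprove it.

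For part (3), write $H_K$ for the hyperplane spanned by a corank-$1$ independent set $K$. The ``if'' direction: suppose $I$ and $J$ both span the hyperplane $H$. Both are bases of the restriction $\underline{M}|_H$ (which has rank $d-1$), so by connectivity of the basis-exchange graph it suffices to treat $J = I - a + b$ with $a\in I$ and $b\in H\setminus I$. Choose $i\ne i'$ in $[n]\setminus H$ (if $|[n]\setminus H|\le 1$ the claim is immediate) and apply the three-term Grassmann--Plücker relation to $A = I - a$ (of size $d-2$) and the four elements $a,b,i,i'$: the term $w(A+a+b)+w(A+i+i') = w(I+b)+w(A+i+i')$ equals $0_\T$ because $I+b\subseteq H$ has rank $<d$, hence is not a basis, so the remaining two terms $w(I+i)+w(J+i')$ and $w(I+i')+w(J+i)$ are equal; therefore $w(I+i)-w(J+i)$ is independent of $i\in[n]\setminus H$, which is precisely the statement $v_I = c\odot v_J$ for some constant $c\in\mathbb{R}$. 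Chaining along the exchange sequence shows $v_I$ is a scalar multiple of $v_J$ whenever $H_I = H_J$. For the ``only if'' direction, write $v_I = \bigoplus_{J\in\mathcal{J}}\lambda_J\odot v_J$; since tropical addition has no cancellation, $\mathrm{supp}(v_I) = \bigcup_{J:\,\lambda_J\ne 0_\T}\mathrm{supp}(v_J)$, and by part (2) this forces $H_I = \bigcap_{J:\,\lambda_J\ne 0_\T}H_J$. Each such $H_J$ then contains $H_I$ and has the same rank $d-1$, so $H_J = H_I$; and since $v_I\ne 0_\T$, at least one $\lambda_J$ is nonzero, giving a $J\in\mathcal{J}$ with $H_J = H_I$. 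The final ``in particular'' clause is the special case $\mathcal{J} = \{J\}$ combined with the ``if'' direction.
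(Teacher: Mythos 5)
The paper does not supply its own proof of this proposition: the sentence immediately preceding it says the statement is recalled from \cite[Lemma 4.1.4]{frenk2013tropical} and \cite[Corollary 4.1.10]{frenk2013tropical}, and no proof environment follows. So there is no in-paper argument to compare against line by line. Your proposal is consistent with the paper's approach in that it also defers the substantive reverse inclusion of part (1) (that the $v_I$ generate all of $V_M$) to Frenk, but it goes further by supplying direct arguments for (2), (3), and the forward inclusion of (1), and those arguments check out. In particular: your part (2) is the right unwinding of \eqref{eq: valuation}; the verification that each $v_I$ satisfies the bend relations of \eqref{eq: tropical linear space hyperplane} reduces, exactly as you say, to the $(d-1,d+1)$ tropical Pl\"ucker relation for $(I,J)$; and in part (3) the three-term relation with $A=I-a$ and the four distinct elements $a\in I$, $b\in H\setminus I$, $i,i'\in[n]\setminus H$ does force $w(I+i)w(J+i')=w(I+i')w(J+i)$, because $w(I+b)=0_{\T}$ kills the first product, so the other two products must agree. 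Two small points worth making explicit if you write this up: the ``if'' direction of (3) uses connectivity of the basis-exchange graph of the restriction $\underline{M}|_H$, a standard matroid fact; and the last step of the ``only if'' direction uses that a flat containing a hyperplane and having the same rank equals it, likewise standard. Neither is a gap, but both deserve a citation or a sentence.
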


\begin{lem}\label{lemma: matroids are determined by hyperplanes}
Let $M$ be a matroid of rank $d$ on a finite set $E$.  A subset $B\subseteq E$ is a basis if and only if it has $d$ elements and is not contained in any hyperplane of $M$.  Consequently an invertible map $\phi: E \rightarrow E$ is a matroid automorphism if and only if both $\phi$ and $\phi^{-1}$ map hyperplanes to hyperplanes.
\end{lem}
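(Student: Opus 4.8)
The plan is to establish the basis characterization first and then deduce the automorphism statement as a formal consequence.

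For the basis characterization, the forward implication is immediate: if $B$ is a basis then $|B| = d = \mathrm{rk}(M)$, and $B$ cannot be contained in a hyperplane $H$, since $\mathrm{rk}(H) = d - 1$ would force $\mathrm{rk}(B) \leq d - 1$, contradicting that the independent set $B$ has rank $d$. For the converse, suppose $|B| = d$ but $B$ is not a basis; then $B$ is dependent, so its closure $\overline{B}$ is a flat of rank at most $d - 1$, strictly less than $d = \mathrm{rk}(M)$. I would then invoke the sublemma that every flat $F$ with $\mathrm{rk}(F) < d$ is contained in a hyperplane: choosing $e \in E \setminus F$ (possible since $\mathrm{rk}(F) < \mathrm{rk}(E)$ forces $F \neq E$), the flat $\overline{F \cup \{e\}}$ has rank $\mathrm{rk}(F) + 1$ and contains $F$, so induction on the corank $d - 1 - \mathrm{rk}(F)$ finishes it (equivalently, in the finite graded lattice of flats every element below the top lies below a coatom). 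Applying this to $\overline{B}$ yields a hyperplane containing $B$, a contradiction; hence $B$ is a basis.

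For the automorphism statement, the forward direction is routine: a matroid automorphism preserves the rank function, hence carries flats to flats of the same rank, so both it and its inverse send hyperplanes to hyperplanes. For the converse, assume $\phi$ and $\phi^{-1}$ each map hyperplanes to hyperplanes. Since $\phi$ is a bijection of $E$ it preserves the cardinality of subsets, so by the first part it is enough to check that $B$ avoids every hyperplane precisely when $\phi(B)$ does. If $\phi(B) \subseteq H$ for a hyperplane $H$, then $B \subseteq \phi^{-1}(H)$, which is a hyperplane by hypothesis; thus $B$ lying in no hyperplane forces $\phi(B)$ to lie in no hyperplane, and the symmetric argument applied to $\phi$ handles the reverse. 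Consequently $B$ is a basis if and only if $\phi(B)$ is, which is exactly the condition for $\phi$ to be an automorphism of $M$.

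The only step that uses genuine matroid theory is the sublemma that flats of rank below $d$ extend to hyperplanes; I expect to dispose of it by the short induction indicated above, so the main obstacle here is essentially bookkeeping rather than a conceptual difficulty.
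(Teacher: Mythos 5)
Your proof is correct and takes essentially the same approach as the paper: both arguments hinge on the fact that a flat of rank below $d$ is contained in a hyperplane (the paper uses this implicitly when it infers $\mathrm{rank}(\bar{B}) > d-1$ from $\bar{B}$ lying in no hyperplane, whereas you prove it by induction on corank), and both then read off the automorphism statement from the basis characterization. You merely fill in details — the sublemma and the hyperplane-preservation bookkeeping — that the paper compresses into a one-line assertion.
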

\begin{proof}
For the first claim, for any $S \subseteq E$ we have that $\mathrm{rank}(S) = \mathrm{rank}(\bar{S})$, where $\bar{S}$ denotes the smallest flat containing $S$.  Let $B\subseteq E$ contain $d$ elements and suppose $B$ is not contained in any hyperplane.  Then $\bar{B}$ is not contained in any hyperplane, and hence $\mathrm{rank}(B) = \mathrm{rank}(\bar{B}) > d - 1$.  On the other hand, $\mathrm{rank}(B) \leq d$ so $\mathrm{rank}(B) = d = |B|$ which implies $B$ is independent and hence a basis.

Conversely, if $B$ is a basis, then any flat containing $B$ consists of all of $E$, so $B$ is not contained in a hyperplane.  And clearly $B$ has $d$ elements.

The second claim directly follows from the first claim. 
\end{proof}

\begin{lem}\label{lemma: weak correspondence between corank 1 independent sets}
Let $M$ be a valuated matroid on $[n]$ and $V_M$ be the associated tropical linear space in $\T^n$.  Let $\sigma\in S_n$ be in the image of $\mathrm{Aut}(V_M) \subseteq \text{GL}_n(\T)$ under the map $\text{GL}_n(\T) \rightarrow S_n$.  Then for every independent set $I$ with corank $1$, there exist $a, c_1,\ldots, c_n \in \T^\times$ and a corank $1$ independent set $J$ such that for all $k$ we have
\begin{equation}\label{eq: weak correspondence between corank 1 independent sets}
w(J \cup \{\sigma(k)\}) = a c_k w(I \cup \{k\})
\end{equation}
Moreover, we can choose $c_1,\ldots, c_n \in \T^\times$ to not depend on $I$.
\end{lem}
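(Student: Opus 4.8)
The plan is to realize $\sigma$ by an automorphism of the module $V_M$ and to track what it does to the canonical generators $v_I$ of Proposition~\ref{prop: V_M}. First I would fix one element $\Phi\in\mathrm{Aut}(V_M)\subseteq\text{GL}_n(\T)$ lying over $\sigma$ under the canonical map $\pi:\text{GL}_n(\T)\to S_n$ of Proposition~\ref{proposition: exact sequence R, GL, S_n}. By that proposition $\Phi$ is a monomial matrix, so there are units $c_1,\dots,c_n\in\T^\times$ with $\Phi(e_k)=c_ke_{\sigma(k)}$ for all $k$. These $c_k$ depend only on the chosen $\Phi$, so once the main identity is established the ``moreover'' clause is immediate; the real content is producing the pair $(J,a)$ for a given $I$.

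Next I would install the module-theoretic input. Since $V_M\subseteq\T^n$ is finitely generated (Proposition~\ref{prop: V_M}(1)) and $\T$ is a totally ordered archimedean idempotent semifield, $V_M$ is weakly free by Proposition~\ref{proposition: Wagneurs result}. Using Proposition~\ref{prop: V_M}(3) I would check that selecting one vector $v_I$ per hyperplane of $\underline M$ yields a minimal generating set of $V_M$, hence a weak basis, and that consequently the weak basis lines of $V_M$ are exactly the submodules $\T v_I$ as $I$ ranges over corank-$1$ independent sets. Because $\Phi$ restricts to an automorphism of the module $V_M$, it carries minimal generating sets to minimal generating sets, so it permutes the weak basis lines (Lemma~\ref{lemma: weak basis lines}). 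Applied to the given $I$, this produces a corank-$1$ independent set $J$ and a unit $a'\in\T^\times$ with $\Phi(v_I)=a'v_J$; here $a'\neq 0_\T$ because $v_I\neq 0_\T$, since a hyperplane of $\underline M$ is a proper flat and so the support of $v_I$ is nonempty.

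Finally I would compare coordinates. Extending $w$ by $0_\T$ to every subset of $[n]$ which is not a basis of $\underline M$, Proposition~\ref{prop: V_M} gives $v_I=\sum_{k}w(I\cup\{k\})e_k$ (the terms with $k\in I$ vanish), and likewise for $v_J$. Applying $\Phi$, using $\Phi(e_k)=c_ke_{\sigma(k)}$, and reindexing the standard basis of $\T^n$ via $m=\sigma(k)$ on the right-hand side,
\[
\sum_{k}w(I\cup\{k\})\,c_k\,e_{\sigma(k)}=\Phi(v_I)=a'v_J=\sum_{k}a'\,w(J\cup\{\sigma(k)\})\,e_{\sigma(k)}.
\]
Comparing the coefficient of $e_{\sigma(k)}$ for each $k$ yields $w(I\cup\{k\})\,c_k=a'\,w(J\cup\{\sigma(k)\})$, that is, $w(J\cup\{\sigma(k)\})=a\,c_k\,w(I\cup\{k\})$ with $a:=(a')^{-1}\in\T^\times$. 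This is exactly \eqref{eq: weak correspondence between corank 1 independent sets}, valid for all $k$ (the degenerate cases $k\in I$ or $\sigma(k)\in J$ are absorbed into the extension of $w$), and $c_1,\dots,c_n$ depend only on $\Phi$ and not on $I$.

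I do not expect a genuine obstacle. The one point requiring care is the identification of the weak basis lines of $V_M$ with the lines $\T v_I$ — this is precisely where Proposition~\ref{prop: V_M}(3) and the weak freeness of $V_M$ are used — together with the minor bookkeeping of the degenerate indices, which the convention $w(S)=0_\T$ for non-bases $S$ handles uniformly.
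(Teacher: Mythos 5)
Your proposal is correct and follows essentially the same route as the paper's proof: pick a monomial lift $\Phi$ of $\sigma$, invoke Proposition~\ref{proposition: Wagneurs result} for weak freeness, use Proposition~\ref{prop: V_M} to exhibit a weak basis of the form $\{v_I\}$, note that $\Phi$ sends $v_I$ to a scalar multiple of some $v_J$, and compare coordinates. The only difference is a minor streamlining: you apply the automorphism directly to $v_I$ for an arbitrary corank-$1$ independent $I$ by appealing to the invariance of weak basis lines, whereas the paper first handles representatives $I\in S$ and then reduces the general case via $v_I = t\,v_{I'}$ for $I'\in S$; both phrasings encode the same Proposition~\ref{prop: V_M}(3) input.
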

\begin{proof}

Consider two corank $1$ independent sets to be equivalent if they span the same hyperplane, and let $S$ contain one representative for each equivalence class.  For each corank $1$ independent set $I$, we let $v_I$ be as in Proposition~\ref{prop: V_M}. Then, it follows from Proposition \ref{pro: Generators of tropical linear space} that each $v_I$ is a $\T$-multiple of $v_J$ for some $J\in S$ and $V_M$ is generated by elements of the form $v_I$.  So $\{v_J \mid J\in S\}$ is a generating set, and by another application of Proposition \ref{pro: Generators of tropical linear space}, we see that no element of $\{v_J \mid J\in S\}$ is a linear combination of other elements. Thus we have found a minimal generating set.  By Proposition \ref{proposition: Wagneurs result}, $V_M\subseteq \mathbb{T}^n$ is weakly free as a $\mathbb{T}$-module.  Let $f\in \mathrm{Aut}(V_M)$ map to $\sigma\in S_n$.  Then $f$ induces a module automorphism of the underlying module of $V_M$ and so preserves the minimal generating set up to rescaling and permutation.  So for $I\in S$ there is some $J\in S$ such that $f(v_I) = r_{IJ}v_J$ for some $r_{IJ} \in \mathbb{T}^\times$. 

We first prove the result in the case $I\in S$, and we let $J$ be as above.  Because $f\in \text{GL}_n(\T)$ maps to $\sigma\in S_n$, there exist $c_1,\ldots, c_n$ such that $f(e_k) = c_k e_{\sigma(k)}$.  This yields
\begin{equation}\label{eq: weak correspondence between corank 1 independent sets2}
r_{IJ}v_J = f(v_I) = f(\sum_k w(I\cup \{k\}) e_k) = \sum_k c_k w(I \cup \{k\}) e_{\sigma(k)},
\end{equation}
where $w(I\cup\{k\})=0_\mathbb{T}$ if $k \in I$. Extracting the $\sigma(k)$ coordinate yields
\begin{equation}
r_{IJ}w(J \cup \sigma(k)) = c_k w(I \cup \{k\}).
\end{equation}
For a general corank $1$ independent set $I$, there exists $I'\in S$ which spans the same hyperplane.  By Proposition \ref{pro: Generators of tropical linear space}, there exists $t\in \mathbb{T}^\times$ such that $v_I = tv_{I'}$. Then, by what we have proven above, there exist $c_1,\ldots c_n$ and $J$ such that \eqref{eq: weak correspondence between corank 1 independent sets2} holds with $I'$ in place of $I$ so
\begin{equation}
r_{IJ}w(J \cup \sigma(k)) = c_k w(I' \cup \{k\}) = tc_k w(I \cup \{k\}).
\end{equation}
\end{proof}

\begin{lem}\label{lemma: induction}
Let $M$ be a valuated matroid on $[n]$ and $V_M$ be the associated tropical linear space in $\mathbb{T}^n$.  Let $\sigma\in S_n$ be in the image of $\mathrm{Aut}(V_M)$ under the map $\text{GL}_n(\T) \rightarrow S_n$.  Then $\sigma: [n]\rightarrow [n]$ is an automorphism of the underlying matroid $\underline{M}$.  Furthermore, for every corank $1$ independent set $I$, there exists $\lambda_I\in \T^\times$ such that for all $k$,
\begin{equation}
w(\sigma(I) \cup \{\sigma(k)\}) = \lambda_Ic_k w(I \cup \{ k \}),
\end{equation}
where the $c_k$'s are the same as in the statement of Lemma \ref{lemma: weak correspondence between corank 1 independent sets}.
\end{lem}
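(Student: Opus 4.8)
The plan is to leverage Lemma \ref{lemma: weak correspondence between corank 1 independent sets} together with Lemma \ref{lemma: matroids are determined by hyperplanes}, and then bootstrap from the correspondence on a chosen set $S$ of representatives to a correspondence for all corank $1$ independent sets. First I would show that $\sigma$ is a matroid automorphism of $\underline{M}$. By Lemma \ref{lemma: matroids are determined by hyperplanes}, it suffices to check that $\sigma$ and $\sigma^{-1}$ send hyperplanes of $\underline{M}$ to hyperplanes. Given a corank $1$ independent set $I$, Lemma \ref{lemma: weak correspondence between corank 1 independent sets} produces a corank $1$ independent set $J$ and scalars $a, c_1,\dots,c_n\in\T^\times$ with $w(J\cup\{\sigma(k)\}) = a c_k w(I\cup\{k\})$ for all $k$. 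Since $c_k\neq 0$ always, $w(J\cup\{\sigma(k)\})\neq 0$ exactly when $w(I\cup\{k\})\neq 0$; in matroid terms, using Proposition \ref{pro: Generators of tropical linear space}(2) (the support of $v_I$ is the complement of the hyperplane spanned by $I$), this says that $\sigma$ maps the complement of the hyperplane $\overline I$ onto the complement of the hyperplane $\overline J$, hence maps the hyperplane $\overline I$ onto $\overline J$. As every hyperplane of $\underline{M}$ is spanned by some corank $1$ independent set, $\sigma$ sends hyperplanes to hyperplanes; applying the same reasoning to $\sigma^{-1}$ (which is also in the image of $\mathrm{Aut}(V_M)$, since $\mathrm{Aut}(V_M)$ is a group) gives that $\sigma^{-1}$ does too. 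By Lemma \ref{lemma: matroids are determined by hyperplanes}, $\sigma\in\mathrm{Aut}(\underline{M})$.

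Next I would promote the set $J$ appearing in Lemma \ref{lemma: weak correspondence between corank 1 independent sets} to be $\sigma(I)$ itself. From the paragraph above, $\sigma$ maps the hyperplane $\overline I$ onto the hyperplane $\overline J$. Since $I$ is an independent set of corank $1$ spanning $\overline I$, its image $\sigma(I)$ is an independent set of corank $1$ (as $\sigma$ is a matroid automorphism) spanning $\sigma(\overline I) = \overline J$. Thus $\sigma(I)$ and $J$ span the same hyperplane, so by Proposition \ref{pro: Generators of tropical linear space}(3) there is some $\mu_I\in\T^\times$ with $v_{\sigma(I)} = \mu_I v_J$, i.e.\ $w(\sigma(I)\cup\{m\}) = \mu_I w(J\cup\{m\})$ for every $m\notin\sigma(I)$ (and both sides vanish for $m\in\sigma(I)$, which matches $k\in I$). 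Combining with $w(J\cup\{\sigma(k)\}) = a c_k w(I\cup\{k\})$ and setting $\lambda_I := \mu_I a$, we get $w(\sigma(I)\cup\{\sigma(k)\}) = \lambda_I c_k\, w(I\cup\{k\})$ for all $k$, which is exactly the claimed identity. Here I should double-check the bookkeeping of which coordinates vanish: $w(I\cup\{k\}) = 0_\T$ precisely when $k\in I$, and correspondingly $w(\sigma(I)\cup\{\sigma(k)\}) = 0_\T$ precisely when $\sigma(k)\in\sigma(I)$, i.e.\ $k\in I$ — so the vanishing patterns on the two sides agree, and the identity holds trivially there regardless of the value of $c_k$.

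The main obstacle I anticipate is purely one of careful matching: ensuring that the $c_k$ from Lemma \ref{lemma: weak correspondence between corank 1 independent sets} — which that lemma already guarantees can be chosen \emph{independently of $I$} — survives unchanged into this statement, and that the new scalar $\lambda_I$ absorbs both the ambient constant $a$ and the rescaling $\mu_I$ coming from the choice of representative $J$ versus $\sigma(I)$. There is also a mild subtlety in handling a general corank $1$ independent set $I$ (not in the representative set $S$): Lemma \ref{lemma: weak correspondence between corank 1 independent sets} already handles this via a scaling factor $t$ relating $v_I$ and $v_{I'}$ for $I'\in S$ spanning the same hyperplane, and that factor too can simply be folded into $\lambda_I$. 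No new ideas beyond Propositions and Lemmas already in hand are needed; the proof is a matter of threading these together cleanly.
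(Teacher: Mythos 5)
Your proposal is correct and follows essentially the same route as the paper: reduce to the hyperplane criterion of Lemma \ref{lemma: matroids are determined by hyperplanes}, read off from the identity in Lemma \ref{lemma: weak correspondence between corank 1 independent sets} that $\sigma$ carries the hyperplane spanned by $I$ to the one spanned by $J$ (applying the same to $\sigma^{-1}$), and then, once $\sigma$ is known to be a matroid automorphism, identify $\sigma(I)$ and $J$ as spanning the same hyperplane so that $v_{\sigma(I)}$ is a scalar multiple of $v_J$, absorbing that scalar and $a$ into $\lambda_I$. The only cosmetic difference is that you invoke Proposition \ref{pro: Generators of tropical linear space}(2)--(3) explicitly where the paper reasons about supports and proportionality of $v_I$-vectors more directly.
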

\begin{proof}To show $\sigma$ is a matroid automorphism it suffices by Lemma \ref{lemma: matroids are determined by hyperplanes} to show $\sigma$ and $\sigma^{-1}$ map hyperplanes to hyperplanes. 

Given a hyperplane $H$, we choose a maximal independent subset $I$, which clearly has corank $1$.  By Lemma \ref{lemma: weak correspondence between corank 1 independent sets}, there exists a corank $1$ independent set $J$ and a unit $a\in \T^\times$ such that
\begin{equation}
w(J \cup \{\sigma(k)\}) = ac_k w(I \cup \{k\}).
\end{equation}
The right side is $0_\mathbb{T}$ if and only if $k\in H$.  Hence $w(J \cup \{\sigma(k)\})=0_\mathbb{T}$ if and only if $k\in H$.  Consequently $w(J\cup \{k\}) = 0_\mathbb{T}$ if and only if $k\in \sigma(H)$.  On the other hand the set of $k$ for which $w(J\cup \{k\}) = 0_\mathbb{T}$ is the hyperplane spanned by $J$.  Consequently $\sigma(H)$ is a hyperplane as desired.  Since $\sigma^{-1}$ satisfies the hypotheses of this lemma, we may apply the above to $\sigma^{-1}$ to see that $\sigma^{-1}(H)$ is also a hyperplane.  This establishes that $\sigma$ is a matroid automorphism.

Now, because $\sigma$ is a matroid automorphism, $\sigma(I)$ is a corank $1$ independent set.  Similarly for each $k$, $\sigma(I) \cup \{\sigma(k)\}$ fails to be a basis if and only if $I\cup\{k\}$ fails to be a basis.  This means that $\sigma(k)$ belongs to the hyperplane spanned by $\sigma(I)$ if and only if $k\in H$.  But, we proved that this holds if and only if $w(J \cup \{\sigma(k)\})=0_\mathbb{T}$, i.e. if and only if $\sigma(k)$ is in the hyperplane spanned by $J$.  So $\sigma(I)$ and $J$ span the same hyperplane. Consequently, there exists some $b\in \T^\times$ such that for all $k$,
\begin{equation}w(\sigma(I) \cup \{\sigma(k)\}) = bw(J \cup \{\sigma(k)\}) = abc_k w(I \cup \{k\}).
\end{equation}
\end{proof}

\begin{pro}\label{proposition: val weak}
Let $M$ be a valuated matroid on $[n]$ and $V_M$ be the associated tropical linear space in $\T^n$.  Let $\sigma\in S_n$ be in the image of $\mathrm{Aut}(V_M)$ under the map $\text{GL}_n(\T) \rightarrow S_n$.  There exist elements $\lambda, c_1,\ldots, c_n\in\mathbb{T}^\times$ such that for any subset $B$ of size $\mathrm{rank}(M)$,
\begin{equation}
w(\sigma(B)) = (\lambda \prod_{k\in B}c_k) w(B)
\end{equation}
\end{pro}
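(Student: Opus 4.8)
The plan is to bootstrap from Lemma \ref{lemma: induction}, which already supplies, for each corank $1$ independent set $I$, a scalar $\lambda_I\in\T^\times$ with $w(\sigma(I)\cup\{\sigma(k)\}) = \lambda_I c_k\, w(I\cup\{k\})$ for all $k$, where the $c_k$ are fixed once and for all (independently of $I$). What remains is a gluing argument: I must show the various $\lambda_I$ can be replaced by a single global constant $\lambda$. Along the way I also need to handle subsets $B$ of size $\mathrm{rank}(M)$ which are not bases.

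First I would dispatch the degenerate case. If $|B| = \mathrm{rank}(M) = d$ but $B$ is not a basis of $\underline{M}$, then $w(B) = 0_\T$; since $\sigma$ is an automorphism of $\underline{M}$ by Lemma \ref{lemma: induction}, $\sigma(B)$ is not a basis either, so $w(\sigma(B)) = 0_\T$, and the claimed identity $w(\sigma(B)) = (\lambda\prod_{k\in B}c_k)w(B)$ holds for any choice of $\lambda$. Hence it suffices to prove the identity on the set $\mathcal{B}$ of bases. For each basis $B$, define
\[
\mu(B) := w(\sigma(B))\cdot\Big(\prod_{k\in B}c_k\Big)^{-1}\cdot w(B)^{-1}\in\T^\times,
\]
where all operations are tropical; this is well defined since $w(B), w(\sigma(B))\neq 0_\T$ and each $c_k\in\T^\times$. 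The proposition now reduces to the claim that $\mu$ is constant on $\mathcal{B}$.

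Next I would observe that $\mu$ is locally constant in the following sense. Write a basis as $B = I\cup\{k\}$ with $I = B\setminus\{k\}$; then $I$ is independent of size $d-1$, hence of corank $1$, and Lemma \ref{lemma: induction} gives $w(\sigma(B)) = \lambda_I c_k\, w(B)$, so
\[
\mu(B) = \lambda_I\cdot\Big(\prod_{j\in I}c_j\Big)^{-1},
\]
which depends only on $I$ and not on the chosen $k\in B$. Consequently $\mu(B) = \mu(B')$ whenever $B$ and $B'$ share a common corank $1$ independent subset; in particular whenever $|B\cap B'| = d-1$, since then $I := B\cap B'$ is independent of corank $1$ and contained in both.

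Finally I would invoke the classical fact that the base exchange graph of $\underline{M}$ is connected: any two bases are joined by a chain of bases in which consecutive terms differ in exactly one element (this follows by iterating the basis exchange axiom to decrease the size of the symmetric difference). Along such a chain $\mu$ is constant by the previous step, so $\mu$ is constant on all of $\mathcal{B}$; call the common value $\lambda$. Then $w(\sigma(B)) = \lambda\big(\prod_{k\in B}c_k\big)w(B)$ for every basis $B$, which together with the degenerate case proves the proposition. I do not expect any serious obstacle here: the only input beyond Lemma \ref{lemma: induction} is the connectivity of the base exchange graph, and the only place requiring care is the bookkeeping that identifies, for a given basis, the corank $1$ independent set that controls $\mu$ on it.
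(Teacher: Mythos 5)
Your proof is correct and takes essentially the same route as the paper's: the paper fixes a reference basis $B_0$, sets $\lambda = w(\sigma(B_0))\big(w(B_0)\prod_{k\in B_0}c_k\big)^{-1}$, and inducts on $|B\triangle B_0|$ via basis exchange, which is exactly a walk in the base exchange graph along which your function $\mu$ is shown to be constant. The repackaging in terms of $\mu$ and connectivity is a cleaner presentation of the same induction, with both arguments resting on Lemma \ref{lemma: induction} for the single-exchange step and the same matroid-theoretic input (basis exchange); the non-basis case is also handled identically.
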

\begin{proof}
Let $c_1,\ldots, c_n$ be as in Lemma \ref{lemma: weak correspondence between corank 1 independent sets}.  We will first prove the result when $B$ is a basis.  Pick a basis $B_0$, and let
\[
\lambda := \frac{w(\sigma(B_0))}{w(B_0)\prod\limits_{k\in B_0}c_k}.
\]
Then clearly the result holds in the case that $B=B_0$.  We proceed by induction on $|B\triangle B_0|$.

We may choose $x\in B_0 \backslash B$ and $y\in B \backslash B_0$ such that $B':=B \cup \{ x\} \backslash \{y\}$ is a basis.  Let $I = B \backslash \{y\}$, so $B = I \cup \{y\}$ and $B' = I\cup \{x\}$.  Then, by Lemma \ref{lemma: induction}, there exists $\lambda_I$ such that
\begin{equation}
w(\sigma(B')) = w(\sigma(I) \cup \{\sigma(x)\}) = \lambda_I c_x w(I \cup \{x\}) = \lambda_I c_x w(B')
\end{equation}
and
\begin{equation}
w(\sigma(B)) = w(\sigma(I) \cup \{\sigma(y)\}) = \lambda_I c_y w(I \cup \{y\}) = \lambda_I c_y w(B).
\end{equation}
Combining the last two equations yields
\begin{equation}
\frac{w(\sigma(B))}{w(B)} = \lambda_I c_y = \frac{c_y}{c_x}\frac{w(\sigma(B'))}{w(B')}.
\end{equation}
On the other hand, by the inductive hypothesis,
\begin{equation}
w(\sigma(B')) = (\lambda \prod_{k\in B'}c_k) w(B') = (\lambda \prod_{k\in B}c_k) \frac{c_x}{c_y} w(B').
\end{equation}
This yields
\begin{equation}
\frac{w(\sigma(B))}{w(B)} = \lambda \prod_{k\in B}c_k
\end{equation}
which is the desired result in the case that $B$ is a basis.  

Finally, suppose $B$ is not a basis.  Since Lemma \ref{lemma: induction} shows $\sigma$ is a matroid automorphism of $\underline{M}$, $\sigma(B)$ is not a basis.  Thus $w(B)$ and $w(\sigma(B))$ are both zero, which implies the result in this case.
\end{proof}

\begin{mythm}\label{theorem: tropical linear space automorphisms induce weak automorphisms}
Let $M$ be a valuated matroid on $[n]$ with the associated tropical linear space $V_M$ in $\T^n$.  Let $H\subseteq S_n$ be the image of $\mathrm{Aut}(V_M)$ under the map $\text{GL}_n(\T) \rightarrow S_n$.  Then $H$ is the weak automorphism group of $M$.
\end{mythm}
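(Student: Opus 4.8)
The plan is to establish the two inclusions $H\subseteq\mathrm{Aut}_w(M)$ and $\mathrm{Aut}_w(M)\subseteq H$, where $\mathrm{Aut}_w(M)$ denotes the weak automorphism group. The inclusion $H\subseteq\mathrm{Aut}_w(M)$ should be immediate from the work already done: given $\sigma\in H$, Proposition~\ref{proposition: val weak} produces $\lambda,c_1,\dots,c_n\in\T^\times$ with $w(\sigma(B))=\bigl(\lambda\prod_{k\in B}c_k\bigr)w(B)$ for every subset $B$ of size $d=\mathrm{rank}(M)$. Since $\T^\times\cong(\mathbb{R},+)$ is divisible, I would choose $\mu\in\T^\times$ with $\mu^d=\lambda$ and set $\tau(i):=\mu c_i$; then $\prod_{i\in B}\tau(i)=\lambda\prod_{i\in B}c_i$ for every $d$-subset $B$, so $\tau$ exhibits $\sigma$ as a weak automorphism (and Lemma~\ref{lemma: induction} already shows $\sigma$ is an automorphism of $\underline M$).

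For the reverse inclusion I would start from a weak automorphism $\sigma$ of $M$. By the definition of weak automorphism together with the remark reformulating it via projective equivalence — and using that $\underline M$ is simple, hence loopless — $\sigma$ is an automorphism of the underlying matroid $\underline M$ and the accompanying map can be taken unit-valued, say $\tau\colon[n]\to\T^\times$ with $w(\sigma(B))=\bigl(\prod_{i\in B}\tau(i)\bigr)w(B)$ for every basis $B$. Define $f\in\text{GL}_n(\T)$ by $f(e_k)=\tau(k)\,e_{\sigma(k)}$; this is a monomial matrix (a permutation composed with an invertible diagonal matrix), hence lies in $\text{GL}_n(\T)$, and its image under $\text{GL}_n(\T)\to S_n$ is exactly $\sigma$. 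It then remains to check that $f$ and $f^{-1}$ preserve $V_M$, and because $f$ is $\T$-linear it suffices to understand $f$ on the distinguished generators $v_I$ of $V_M$ from Proposition~\ref{prop: V_M}, as $I$ ranges over corank $1$ independent sets.

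The core computation is the identity $f(v_I)=\bigl(\prod_{i\in I}\tau(i)\bigr)^{-1}v_{\sigma(I)}$. I would verify it one coordinate at a time: the $\sigma(k)$-coordinate of $f(v_I)$ equals $\tau(k)(v_I)_k$, which is to be compared with $(v_{\sigma(I)})_{\sigma(k)}$. When $k\in I$ both are $0_\T$; when $k\notin I$ and $I\cup\{k\}$ is a basis, the weak automorphism relation at $B=I\cup\{k\}$ gives $w(\sigma(I)\cup\{\sigma(k)\})=\tau(k)\bigl(\prod_{i\in I}\tau(i)\bigr)w(I\cup\{k\})$, from which the identity follows; and when $k\notin I$ but $I\cup\{k\}$ is not a basis, then $\sigma(I)\cup\{\sigma(k)\}$ is not a basis either (as $\sigma\in\mathrm{Aut}(\underline M)$) and both sides vanish. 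Since $\sigma$ is a matroid automorphism, $I\mapsto\sigma(I)$ permutes the corank $1$ independent sets, so $f$ carries the generating set $\{v_I\}$ of $V_M$ onto itself up to units; hence $f(V_M)=V_M$, and therefore $f^{-1}(V_M)=V_M$ as well, so $f\in\mathrm{Aut}(V_M)$ and $\sigma\in H$.

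The first inclusion is routine once Proposition~\ref{proposition: val weak} is in hand — the only subtlety there is absorbing the global scalar $\lambda$, which needs divisibility of $\T^\times$. The main obstacle is the second inclusion: one has to guess the right automorphism $f$ (with $\tau$ itself playing the role of the diagonal part) and, crucially, check that $f^{-1}$ — not just $f$ — stabilizes $V_M$; this is where it matters that $f$ permutes the canonical generating set of $V_M$ up to scalars rather than merely sending it into $V_M$. The coordinatewise verification of $f(v_I)=(\prod_{i\in I}\tau(i))^{-1}v_{\sigma(I)}$ is then just bookkeeping with the vectors $v_I$ and the defining relation of a weak automorphism.
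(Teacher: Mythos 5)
Your proof of the inclusion $H\subseteq\mathrm{Aut}_w(M)$ matches the paper's essentially verbatim: both invoke Proposition~\ref{proposition: val weak} and absorb the scalar $\lambda$ by a $d$-th root (the paper writes $c_k'=\lambda^{1/d}c_k$; you write $\tau(i)=\mu c_i$ with $\mu^d=\lambda$ — the same move). For the reverse inclusion $\mathrm{Aut}_w(M)\subseteq H$, however, you take a genuinely different route. The paper argues from the \emph{defining relations} side: for $x\in V_M$ it manipulates the bend relations of the tropical hyperplanes (\ref{eq: tropical linear space hyperplane}) associated to each $(d+1)$-set $S$, showing that $y$ defined by $y_i=x_{\sigma(i)}/c_i$ again satisfies them, and then reverses the argument for $f^{-1}$. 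You argue from the \emph{generators} side: using Proposition~\ref{prop: V_M}(1) you compute $f(v_I)=\bigl(\prod_{i\in I}\tau(i)\bigr)^{-1}v_{\sigma(I)}$ coordinatewise, and since $\sigma\in\mathrm{Aut}(\underline{M})$ permutes the corank-$1$ independent sets, $f$ carries the generating family $\{v_I\}$ onto itself up to units, giving $f(V_M)=V_M$ and hence $f,f^{-1}\in\mathrm{Aut}(V_M)$ in a single stroke. Your approach is more conceptual and automatically handles the $f^{-1}$ stabilization; the paper's is more self-contained in that it only uses the ambient bend-relation description of $V_M$. Both are correct. One small caution: the preliminary step — that looplessness of $\underline{M}$ forces $\tau$ to be unit-valued, which in turn makes $\sigma$ a matroid automorphism of $\underline{M}$ and makes the prefactor $(\prod_{i\in I}\tau(i))^{-1}$ a genuine unit — is doing real work and deserves a full sentence of justification (for instance: if $\tau(i)=0_{\T}$, then every $d$-set $B\ni i$ has $w(\sigma(B))=0_{\T}$, so every $d$-set containing $\sigma(i)$ is dependent, making $\sigma(i)$ a loop, contradiction), not just a parenthetical aside.
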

\begin{proof}
First let $\sigma \in H$.  By Proposition \ref{proposition: val weak}, there exists $\lambda, c_1,\ldots, c_n$ such that for every subset $B$ of size $\mathrm{rank}(M)$,
\begin{equation}
w(\sigma(B)) = (\lambda \prod_{k\in B}c_k) w(B).
\end{equation}
If we take $c_k' = \lambda^{1/\mathrm{rank}(M)}c_k$, then
\begin{equation}
w(\sigma(B)) = (\prod_{k\in B}c_k') w(B),
\end{equation}
and hence $\sigma$ is a weak automorphism.

Conversely, let $\sigma$ be a weak automorphism.  Then there exist $c_1,\ldots,c_n$ such that for every subset $B$ of size $\mathrm{rank}(M)$,
\begin{equation}\label{eq: weak}
w(\sigma(B)) = (\prod_{k\in B}c_k) w(B).
\end{equation}
Now consider $f \in \text{GL}_n(\mathbb{T})$ given by $f(e_k)=c_ke_{\sigma(k)}$. We show that $f \in \text{Aut}(V_M)$. This will imply that $\sigma \in H$. So, we prove that for $x \in V_M$, we have $f(x) \in V_M$ and conversely, for any $z \in V_M$, we have $f^{-1}(z) \in V_M$.

Let $S\subseteq M$ have $\mathrm{rank}(M) + 1$ elements.  Then the same is true of $\sigma(S)$.  By the definition of $V_M$, for any $x\in V_M$, the bend relations of the following 
\begin{equation}\label{eq: now}
\sum_{j\in \sigma(S)} w(\sigma(S) -j) x_j 
\end{equation}
are satisfied by any such $S$. Now, we can rewrite \eqref{eq: now} as follows:
\begin{equation}
\sum_{i\in S} w(\sigma(S) -\sigma(i)) x_{\sigma(i)}.
\end{equation}
From \eqref{eq: weak}, we can rewrite the terms in the above sum as
\begin{equation}\label{eq: pre-division}
\sum_{i\in S} (\prod_{k\in S-i}c_k) w(S - i) x_{\sigma(i)}.
\end{equation}
Of course the bend relations of (\ref{eq: pre-division}) will be unchanged if we divide every term by $\prod_{k\in S}c_k$, so the bend relations of
\begin{equation}\label{eq: after-division}
\sum_{i\in S} w(S - i) \frac{x_{\sigma(i)}}{c_i}
\end{equation}
are the same as the bend relations of (\ref{eq: pre-division}). Define $y\in \T^n$ by $y_i = x_{\sigma(i)} / c_i$, i.e., $x=f(y)$. Since the bend relations of
\begin{equation}
\sum_{i\in S} w(S- i) y_i
\end{equation} 
are satisfied by any $S$ of size $\mathrm{rank}(M) + 1$, $y=f^{-1}(x)\in V_M$. Now, one can reverse the above to see that $f(V_M) \subseteq V_M$. 
\end{proof}

\begin{mydef}
Let $M$ be a valuated matroid and $G$ be a group. 
\begin{enumerate}
    \item 
A weak $G$-action on $M$ is a homomorphism $G\rightarrow \mathrm{Aut}_w(M)$, where $\mathrm{Aut}_w(M)$ denotes the weak automorphism group of $M$. 
\item 
 Two weak $G$-actions $\alpha, \beta: G\rightarrow\mathrm{Aut}_w(M)$ are weakly isomorphic if there is a weak automorphism $\phi \in \mathrm{Aut}_w(M)$ such that $\phi\alpha(g) = \beta(g)\phi$ for all $g\in G$.
\end{enumerate}
\end{mydef}

\begin{rmk}\label{remark: equivalent vs isomoprhic}
Let $V_M$ be a tropical linear space in $\mathbb{T}^n$ associated to a valuated matroid $M$.
\begin{enumerate}
    \item 
Recall that tropical subrepresentations $\alpha,\beta:G \to \text{Aut}(V_M)$ are \emph{equivalent}, denoted by $\alpha \sim \beta$, if there is some $D \in \text{Aut}(V_M) \cap (\mathbb{T}^\times)^n$ such that
\begin{equation}\label{eq: equivalent}
\alpha(g)D =D\beta(g), \quad \forall g \in G. 
\end{equation}
On the other hand, $\alpha$ and $\beta$ are \emph{isomorphic} if there is some $F \in \text{Aut}(V_M)$ such that 
\begin{equation}\label{eq: isomorphic}
\alpha(g)F =F\beta(g), \quad \forall g \in G.
\end{equation}
\item 
Since Theorem \ref{theorem: automorphism group as a semidirect product} states $\mathrm{Aut}(V_M)$ is the semidirect product of $\mathrm{Aut}(V_M)\cap (\T^\times)^n$ and $H$, we view $H$ as a subgroup of 
$\mathrm{Aut}(V_M)$, so each element of $\mathrm{Aut}(V_M)$ can be uniquely written as $DP$ with $D\in \mathrm{Aut}(V_M)\cap (\T^\times)^n$ and $P\in H$.  And $DP = PD'$ for some $D'\in \mathrm{Aut}(V_M)\cap (\T^\times)^n$. 
\end{enumerate}
\end{rmk}

Let $H$ be the image of $\mathrm{Aut}(V_M)$ inside $S_n$. By Theorem \ref{theorem: tropical linear space automorphisms induce weak automorphisms}, we have $H = \mathrm{Aut}_w(M)$.  Theorem \ref{theorem: automorphism group as a semidirect product} implies that there is a one-to-one correspondence between equivalence classes of homomorphisms $G\rightarrow \mathrm{Aut}(V_M)$ under the equivalence relation in \eqref{eq: equivalent} and homomorphisms $G\rightarrow \mathrm{Aut}_w(M)$.  However, observe that the notion of the equivalence \eqref{eq: equivalent} is weaker than requiring the homomorphisms to be isomorphic as in \eqref{eq: isomorphic}. Nonetheless, we have the following.

\begin{cor}\label{corollary: main cor}
Let $M$ be a valuated matroid, $V_M$ be the associated tropical linear space in $\T^n$, and $G$ be a finite group.  Then isomorphism classes of tropical subrepresentations whose underlying tropical linear space is isomorphic to $V_M$ are in one-to-one correspondence with weak isomorphism classes of weak $G$-actions on $M$.
\end{cor}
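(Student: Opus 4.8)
The plan is to bootstrap the correspondence supplied by Theorem \ref{theorem: automorphism group as a semidirect product} — between \emph{equivalence} classes (in the sense of \eqref{eq: equivalent}) of tropical subrepresentations $G\to\mathrm{Aut}(V_M)$ and homomorphisms $G\to H$ — up to the level of \emph{isomorphism} classes, exploiting the semidirect product decomposition $\mathrm{Aut}(V_M)\cong H\ltimes V$ together with the identification $H=\mathrm{Aut}_w(M)$ from Theorem \ref{theorem: tropical linear space automorphisms induce weak automorphisms}. First I would reduce to the case where the underlying tropical linear space is literally $V_M$: if $\rho$ is a tropical subrepresentation with underlying space $W$ and $\iota\colon W\to V_M$ is an isomorphism of tropical linear spaces, then $g\mapsto\iota\,\rho(g)\,\iota^{-1}$ is a tropical subrepresentation with underlying space $V_M$ isomorphic to $\rho$; so every isomorphism class with underlying space isomorphic to $V_M$ is represented by a homomorphism $\alpha\colon G\to\mathrm{Aut}(V_M)$, and two such are isomorphic precisely when they are conjugate by an element of $\mathrm{Aut}(V_M)$ as in \eqref{eq: isomorphic}.

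Let $\pi\colon\mathrm{Aut}(V_M)\to H$ denote the projection arising from the semidirect product. The heart of the argument is the claim that $\alpha$ and $\beta$ are isomorphic if and only if $\pi\circ\alpha$ and $\pi\circ\beta$ are conjugate in $H$ — and the latter is by definition weak isomorphism of the corresponding weak $G$-actions on $M$, since $H=\mathrm{Aut}_w(M)$. For the forward implication I would take $F\in\mathrm{Aut}(V_M)$ with $\alpha(g)F=F\beta(g)$ for all $g$, write $F=DP$ with $D\in\mathrm{Aut}(V_M)\cap(\T^\times)^n$ and $P\in H$ as in Remark \ref{remark: equivalent vs isomoprhic}(2), and apply the homomorphism $\pi$; since $D\in\ker\pi=V$ and $\pi$ restricts to the identity on $H$, this yields $\pi(\alpha(g))\,P=P\,\pi(\beta(g))$ for all $g$, so $\pi\circ\alpha$ and $\pi\circ\beta$ are conjugate by $P$.

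For the reverse implication, given $P\in H$ with $\pi\circ\alpha=P(\pi\circ\beta)P^{-1}$, I would view $H$ inside $\mathrm{Aut}(V_M)$ via the splitting and set $\beta'(g):=P\beta(g)P^{-1}$; then $\beta'$ is a tropical subrepresentation isomorphic to $\beta$ and $\pi\circ\beta'=\pi\circ\alpha$. By the injectivity half of Theorem \ref{theorem: automorphism group as a semidirect product}(2), $\alpha$ and $\beta'$ are then equivalent in the sense of \eqref{eq: equivalent}, hence a fortiori isomorphic, so $\alpha\cong\beta'\cong\beta$. Combined with surjectivity of the correspondence in Theorem \ref{theorem: automorphism group as a semidirect product}(2), these two implications show that $[\alpha]\mapsto[\pi\circ\alpha]$ descends to a bijection from isomorphism classes of tropical subrepresentations with underlying space $V_M$ onto $H$-conjugacy classes of homomorphisms $G\to H$, i.e. onto weak isomorphism classes of weak $G$-actions on $M$, which is the assertion.

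The group theory here is light once Theorem \ref{theorem: automorphism group as a semidirect product} is available; I expect the only delicate point to be keeping the three relations in play — equivalence \eqref{eq: equivalent}, isomorphism \eqref{eq: isomorphic}, and weak isomorphism of weak $G$-actions — cleanly separated, and consistently using the splitting $H\hookrightarrow\mathrm{Aut}(V_M)$ so that ``conjugate by $P\in H$'' on the $S_n$-side is matched with ``isomorphic via $P$'' on the $\mathrm{Aut}(V_M)$-side.
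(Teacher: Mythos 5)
Your proposal is correct and follows essentially the same route as the paper's proof: both reduce to homomorphisms $G\to\mathrm{Aut}(V_M)$, exploit the semidirect product decomposition from Theorem \ref{theorem: automorphism group as a semidirect product} (via Remark \ref{remark: equivalent vs isomoprhic}) together with the identification $H=\mathrm{Aut}_w(M)$ from Theorem \ref{theorem: tropical linear space automorphisms induce weak automorphisms}, and pass from the equivalence-class correspondence to the isomorphism-class correspondence by conjugating by elements of the split copy of $H$. The only difference is bookkeeping: the paper packages the step as an $H$-equivariant bijection $X\to Z$ whose orbit sets are then identified, whereas you prove the two-sided implication ``$\alpha\cong\beta$ iff $\pi\circ\alpha$ and $\pi\circ\beta$ are $H$-conjugate'' directly, with the reverse implication appealing to the injectivity half of Theorem \ref{theorem: automorphism group as a semidirect product}(2) — these are the same argument stated in two dialects.
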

\begin{proof}
It is clear that isomorphism classes of tropical subrepresentations whose underlying tropical space is isomorphic to $V_M$ are in one-to-one correspondence with isomorphism classes of homomorphisms $G\rightarrow \mathrm{Aut}(V_M)$. Let $H$ be the image of $\mathrm{Aut}(V_M)$ inside $S_n$. As mentioned in Remark \ref{remark: equivalent vs isomoprhic}, we view $H$ as a subgroup of $\text{Aut}(V_M)$ via the semidirect product $\text{Aut}(V_M)=H\ltimes V$, where $V=\mathrm{Aut}(V_M)\cap (\T^\times)^n$.

In the following, we let $A$ be the set of homomorphisms $G \to \text{Aut}(V_M)$, and $X$ (resp.~$Y$) be the set of equivalence classes (resp.~isomorphism classes) of $A$.

Note that $H$ acts on $A$ by conjugation, where $H$ is viwed as a subgroup of $\text{Aut}(V_M)$. We first show that $H$-action on $A$ descends to $X$ and that $Y$ is the set of orbits under this action. To see this, first observe that for $\alpha, \beta \in A$, if $\alpha \sim \beta$, then there exists $D \in \mathrm{Aut}(V_M)\cap (\T^\times)^n$ such that 
\[
\alpha(g)D = D\beta(g), \quad \forall g\in G. 
\]
Let $P \in H$. Then, there exists $D' \in \mathrm{Aut}(V_M)\cap (\T^\times)^n$ such that $PD=D'P$.\footnote{See Remark \ref{remark: equivalent vs isomoprhic}.} It follows that
\[
(P\alpha(g)P^{-1})D' = (P\alpha(g))(DP^{-1})=P(D\beta(g))P^{-1}=D'(P\beta(g)P^{-1}),
\]
showing that $P\alpha P^{-1} \sim P\beta P^{-1}$. So $H$-action on $A$ by conjugation descends to $X$.  

Moreover since each element of $\mathrm{Aut}(V_M)$ has the form $DP$ for $P \in H$ and $D \in \mathrm{Aut}(V_M)\cap (\T^\times)^n$, $\alpha$ and $\beta$ are isomorphic if and only if there exist such $D$ and $P$ such that 
\[
\alpha(g)DP = DP\beta(g),
\]
or equivalently if $\alpha$ is equivalent to $P\beta P^{-1}$.  This shows isomorphism classes correspond to orbits of equivalence classes.

Let $Z$ be the set of homomorphisms $G \to \text{Aut}_w(M)$. Observe that $H=\mathrm{Aut}_w(M)$ acts on $Z$ by conjugation and acts on $X$ also by conjugation, but with $H$ viewed as a subgroup of $\text{Aut}(V_M)$. It follows from Theorem \ref{theorem: automorphism group as a semidirect product} that the following is a bijection:
\[
\eta: X \to Z, \quad \alpha \mapsto \pi\circ \alpha,
\]
where $\pi$ is the restriction of the projection map $\pi:\text{GL}_n(\mathbb{T}) \to S_n$ to $\text{Aut}(V_M)$. We claim that the bijection $\eta$ is $H$-equivariant when we view $X$ and $Z$ as $H$-sets. To make the notation less confusing, we let $\iota:H \to \text{Aut}(V_M)$ be the section of $\pi:\text{Aut}(V_M) \to H$. Then the $H$-action on $X$ is as follows:
\[
h\cdot [\alpha] = [\iota(h)\alpha \iota(h^{-1})],
\]
where $[\alpha]$ is the equivalence class of $\alpha \in A$ in $X$. Now, for any $h \in H$ and $[\alpha] \in X$, we have
\[
\eta(h\cdot [\alpha])=\eta([\iota(h)\alpha \iota(h^{-1})])=\pi  (\iota(h)\alpha \iota(h^{-1}))
\]
\[
=(\pi (\iota(h)) (\pi(\alpha)\pi  (\iota(h^{-1}))=h\pi(\alpha)h^{-1}=h\cdot \eta([\alpha]),
\]
showing that $\eta$ is $H$-equivariant. 

Since the set of homomorphisms $G\rightarrow \mathrm{Aut}_w(M)$ and the set of equivalence classes of homomorphisms $G\rightarrow \mathrm{Aut}(V_M)$ agree as $H$-sets, they have the same set of orbits.  And we saw that the orbits of the set of equivalence classes of homomorphisms $G\rightarrow \mathrm{Aut}(V_M)$ are precisely the isomorphism classes of homomorphisms $G\rightarrow \mathrm{Aut}(V_M)$.  On the other hand, by definition the orbits of homomorphisms $G\rightarrow \mathrm{Aut}_w(M)$ are precisely the weak isomorphism classes of $G\rightarrow \mathrm{Aut}_w(M)$.
\end{proof}

\begin{myeg}
The above result is false for infinite groups.  For example, let $G = \T^\times$ and let $M$ be a valuated matroid. Then there exists a nontrivial $G$-action on $V_M$ (e.g. by scalar multiplication).  But $G$ cannot act nontrivially on $M$ because $G$ is divisible and $\text{Aut}_w(M)$ is finite.
\end{myeg}

In addition to tropical linear spaces, we may also use a quotient construction to associate a module to a valuated matroid. Let $M$ be a valuated matroid with rank $d$. Recall the tropical linear space $V_M$ is the subspace of $\T^n$ cut out by the bend relations associated to vectors of the form
\begin{equation}
\sum_{j\in S} w(S - j) x_j
\end{equation}
for each $S\subseteq M$ of size $d + 1$.  The quotient module $Q_M$ is instead the quotient of $\T^n$ by such bend relations; explicitly we impose for each $S$ of size $d + 1$ and each $k\in S$ that
\begin{equation}
\sum_{j\in S} w(S - j) e_j \sim \sum_{j\in S - k} w(S - j) e_j
\end{equation}

With the same notation as above, it was proved in \cite{giansiracusa2018grassmann} that 
\begin{equation}\label{QM}
Q_M^\vee:=\Hom(Q_M,\mathbb{T}) = V_M.
\end{equation}

Note that in general $Q_M$ is not reflexive (see \cite{giansiracusa2018grassmann}), however it is quasi-free as the following lemma shows. 

\begin{lem}\label{lemma: QM quasi-free lemma}
With the same notation as above, $Q_M$ is quasi-free.  
\end{lem}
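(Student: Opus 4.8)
The plan is to realize $Q_M$ as a quotient of the free module $\T^n$ by a congruence all of whose nontrivial generating relations involve vectors supported on at least two coordinates, and then to invoke Proposition \ref{proposition: quasi-free quotient} directly. Since $\T$ is a zero-sum-free semifield without zero-divisors and $\T^n$ is free, the standard basis $e_1,\ldots,e_n$ is a quasi-basis of $\T^n$, and the scalar multiples of quasi-basis vectors are exactly the vectors supported on at most one coordinate (including $0$). By construction, $Q_M = \T^n / \sim$, where $\sim$ is generated by the relations
\[
\sum_{j\in S} w(S-j)\,e_j \;\sim\; \sum_{j\in S-k} w(S-j)\,e_j
\]
for $S\subseteq[n]$ with $|S| = d+1$ and $k\in S$. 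So it suffices to discard the relations that are trivially of the form $x\sim x$ and to check that the remaining ones have both sides supported on at least two coordinates.

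The key step is a support computation using matroid theory. Since $|S| = d+1 > d = \mathrm{rank}(\underline{M})$, the set $S$ is dependent. If $\mathrm{rank}(S)\le d-1$, then every $S-j$ has rank $<d$, so $w(S-j) = 0$ for all $j$ and the relation is $0\sim 0$, which may be dropped. Otherwise $\mathrm{rank}(S) = d$, and $S$ contains a unique circuit $C$; the standard matroid fact that $S-j$ is a basis precisely when $j\in C$ shows that $a := \sum_{j\in S} w(S-j)e_j = \sum_{j\in C} w(S-j)e_j$ has support exactly $C$, while $b := \sum_{j\in S-k} w(S-j)e_j$ has support $C$ when $k\notin C$ (in which case the relation is again $a\sim a$ and may be dropped) and support $C\setminus\{k\}$ when $k\in C$.

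Finally I would invoke simplicity of $\underline{M}$: a simple matroid has no circuits of size $\le 2$, so $|C|\ge 3$. Hence every nontrivial defining relation of $Q_M$ has the form $a\sim b$ with $|\mathrm{supp}(a)| = |C|\ge 3$ and $|\mathrm{supp}(b)| = |C|-1\ge 2$, so neither $a$ nor $b$ is a scalar multiple of a quasi-basis vector. Applying Proposition \ref{proposition: quasi-free quotient} with $\T^n$ in place of $M$ and the standard basis as its quasi-basis then gives that $Q_M$ is quasi-free, with the images of $e_1,\ldots,e_n$ forming a quasi-basis. The only delicate point is the bookkeeping of which relations are trivial and the appeal to simplicity to rule out short circuits; the rest is an immediate application of the quotient criterion. (Note that in the realizable case this recovers the observation in Example \ref{ex:monoid-val-matroid} that the image of each $\nu(U)$-vector has at least three nonzero components.)
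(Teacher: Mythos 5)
Your proof is correct and takes essentially the same approach as the paper: reduce to showing that for each generating relation the support $\{j\in S : w(S-j)\neq 0\}$ is either empty or has at least three elements, then invoke Proposition~\ref{proposition: quasi-free quotient} (the paper routes this through Example~\ref{ex:monoid-val-matroid}). The only cosmetic difference is in the matroid step: you identify the support exactly as the unique circuit in a nullity-one set $S$, while the paper argues by contradiction that a nonempty support must be dependent; both land on simplicity $\Rightarrow$ circuit size $\geq 3$.
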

\begin{proof}
By Example \ref{ex:monoid-val-matroid}, it suffices to show that for any set $S$ of size $d + 1$, either $w(S - j) = 0_{\T}$ for all $j\in S$ or there are at least $3$ choices of $j\in S$ such that $w(S - j) \neq 0$.  For this, it suffices to show $\{ j\in S \mid w(S - j) \neq 0_{\T}\}$ is either empty or contains a circuit.

Suppose for contradiction that $I :=\{ j\in S \mid w(S - j) \neq 0_{\T}\}$ is a nonempty independent set.  Fix some $j_0 \in I$ and let $B = S - j_0$.  Then $B\subseteq S$ is a basis by definition of $I$.  Let $B'\subseteq S$ be a maximal independent subset of $S$ containing $I$.  Since all maximal independent subsets of $S$ have the same size, $B'$ also has $d$ elements and hence is a basis.  So there is exactly one element of $S$ (which we denote $k$) such that $k\not \in B'$.  Then $S - k$ is a basis so $w(S - k) \neq 0$, contradicting $k\not\in I$.
\end{proof}

From Lemma \ref{lemma: QM quasi-free lemma}, we may view $\text{Aut}(Q_M)$ as a subgroup of $\text{GL}_n(\mathbb{T})$. The following proposition shows that we can use $Q_M$ to compute the weak automorphism group $\text{Aut}_w(M)$.

\begin{pro}\label{proposition: Q_M case}
The image of $\mathrm{Aut}(Q_M)$ under the map $\pi:GL_n(\T)\rightarrow S_n$ is the weak automorphism group $\mathrm{Aut}_w(M)$.
\end{pro}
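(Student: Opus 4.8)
The plan is to prove the two inclusions $\pi(\mathrm{Aut}(Q_M))\subseteq\mathrm{Aut}_w(M)$ and $\mathrm{Aut}_w(M)\subseteq\pi(\mathrm{Aut}(Q_M))$ separately, using the identification $Q_M^\vee=V_M$ from \eqref{QM} together with Theorem \ref{theorem: tropical linear space automorphisms induce weak automorphisms}, which already identifies $\pi(\mathrm{Aut}(V_M))$ with $\mathrm{Aut}_w(M)$. Throughout I write $C$ for the bend congruence on $\T^n$ defining $Q_M$, $q\colon\T^n\to Q_M$ for the quotient map, and $\bar e_1,\dots,\bar e_n$ for the images of the standard basis; by Lemmas \ref{lemma: QM quasi-free lemma} and \ref{lemma: quasi-free implies weakly free} these form a quasi-basis and $Q_M$ is weakly free, so $\mathrm{Aut}(Q_M)$ embeds into $GL_n(\T)$ via the monomorphism $\iota$ of Lemma \ref{lemma: embedding of automorphisms into GL_n}. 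I will also use that every $\sigma\in\mathrm{Aut}_w(M)$ is in particular an automorphism of the underlying matroid $\underline M$, since multiplying $w(B)$ by a unit preserves the condition $w(B)\neq 0_\T$, i.e.\ preserves bases.

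For the inclusion $\pi(\mathrm{Aut}(Q_M))\subseteq\mathrm{Aut}_w(M)$ I would use functoriality of $\Hom(-,\T)$. Given $\bar g\in\mathrm{Aut}(Q_M)$ with matrix $A=\iota(\bar g)\in GL_n(\T)$, applying $\Hom(-,\T)$ and the identification $Q_M^\vee=V_M$ yields a map $\bar g^\vee\colon V_M\to V_M$; since $\bar g^{-1}\in\mathrm{Aut}(Q_M)$, this map is invertible with inverse $(\bar g^{-1})^\vee$. Lifting functionals on $Q_M$ back to $\T^n$ and unwinding definitions, $\bar g^\vee$ is the restriction to $V_M$ of the tropical-linear map with matrix $A^{T}$; hence $A^{T}$ and $(A^{T})^{-1}=(A^{-1})^{T}$ preserve $V_M$, so $A^{T}\in\mathrm{Aut}(V_M)$. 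Because transposing a monomial matrix inverts its underlying permutation, $\pi(A^{T})=\pi(A)^{-1}$, so $\pi(A)^{-1}\in\pi(\mathrm{Aut}(V_M))=\mathrm{Aut}_w(M)$ by Theorem \ref{theorem: tropical linear space automorphisms induce weak automorphisms}; as $\mathrm{Aut}_w(M)$ is a group, $\pi(A)\in\mathrm{Aut}_w(M)$.

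For the reverse inclusion I would argue by an explicit construction. Fix $\sigma\in\mathrm{Aut}_w(M)$ and, by definition of weak automorphism, units $c_1,\dots,c_n\in\T^\times$ with $w(\sigma(B))=(\prod_{k\in B}c_k)w(B)$ for every $d$-subset $B$, where $d=\mathrm{rank}(M)$. Define $g\colon\T^n\to\T^n$ by $g(e_j)=c_{\sigma^{-1}(j)}e_{\sigma^{-1}(j)}$. Writing $r_S=\sum_{j\in S}w(S-j)e_j$ for the vector giving the defining bend relations of $Q_M$ attached to a $(d+1)$-subset $S$, the key computation is
\[
g(r_S)=\Bigl(\prod_{k\in\sigma^{-1}(S)}c_k\Bigr)\,r_{\sigma^{-1}(S)},
\]
obtained by reindexing the sum along $\sigma^{-1}$ and substituting $w(S-\sigma(i))=w\bigl(\sigma(\sigma^{-1}(S)-i)\bigr)=\bigl(\prod_{k\in\sigma^{-1}(S)-i}c_k\bigr)\,w(\sigma^{-1}(S)-i)$, where the zero-coefficient (non-basis) slots are consistent precisely because $\sigma$ is a matroid automorphism. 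Since $g$ is monomial it sends the bend relation $(r_S,(r_S)_{\hat k})$ to $(g(r_S),(g(r_S))_{\widehat{\sigma^{-1}(k)}})$, which is a unit multiple of a defining bend relation of $Q_M$ and hence lies in $C$; as $C$ is generated by these relations and $g$ is $\T$-linear, $g(C)\subseteq C$, so $g$ descends to an endomorphism $\bar g$ of $Q_M$. An entirely analogous computation gives $g^{-1}(r_S)=(\prod_{k\in S}c_k)^{-1}r_{\sigma(S)}$, so $g^{-1}$ descends too; since $\overline{g}\,\overline{g^{-1}}$ and $\overline{g^{-1}}\,\overline{g}$ agree with the identity after composing with the surjection $q$, we conclude $\bar g\in\mathrm{Aut}(Q_M)$. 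Finally $\bar g(\bar e_j)=c_{\sigma^{-1}(j)}\bar e_{\sigma^{-1}(j)}$, so $\iota(\bar g)$ is monomial with underlying permutation $\sigma^{-1}$, i.e.\ $\pi(\iota(\bar g))=\sigma^{-1}$; hence $\sigma^{-1}\in\pi(\mathrm{Aut}(Q_M))$ and, this image being a subgroup, $\sigma\in\pi(\mathrm{Aut}(Q_M))$.

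The main obstacle is the second inclusion, and concretely the bookkeeping in the identity $g(r_S)=(\prod_k c_k)\,r_{\sigma^{-1}(S)}$: one has to track exactly which products of the $c_k$ survive after reindexing, and to check that the missing (non-basis) coordinates match up on both sides — this is the step where one genuinely needs that a weak automorphism is an honest automorphism of $\underline M$. The first inclusion is essentially formal once one grants $Q_M^\vee=V_M$; the only slightly delicate point there is verifying that the dual of an automorphism of $Q_M$ is realized on $V_M\subseteq\T^n$ by the transpose matrix.
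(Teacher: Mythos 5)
Your proposal is correct and takes essentially the same route as the paper. In the first direction, both arguments dualize the commutative square $q\circ A = \bar g\circ q$ using $Q_M^\vee = V_M$ and the fact that $q^\vee$ is the canonical inclusion $V_M\hookrightarrow\T^n$, identify the induced map with $A^T$, and pass through Theorem \ref{theorem: tropical linear space automorphisms induce weak automorphisms} (using that transposition inverts the underlying permutation, so one lands on $\sigma^{-1}$ and appeals to $\mathrm{Aut}_w(M)$ being a group). In the second direction, both construct an explicit monomial lift on $\T^n$ from the data $(\sigma,\tau)$ and verify it maps each generating bend relation to a unit multiple of another generating bend relation, hence descends; your $g$ and the paper's $f$ differ only by the convention $\sigma\leftrightarrow\sigma^{-1}$, $\tau\leftrightarrow\tau^{-1}$, which you correctly absorb by noting $\pi(\mathrm{Aut}(Q_M))$ is a subgroup. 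The one small point you should make explicit is that the scalars $c_i=\tau(i)$ are nonzero: the definition of weak automorphism only gives a map $\tau:[n]\to\T$, and you need the standing simplicity assumption (no loops, so every $i$ lies in some basis) to conclude $\tau(i)\in\T^\times$ — the paper flags this step.
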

\begin{proof}

Let $\sigma$ belong to the image of $\mathrm{Aut}(Q_M)$ under $\pi:\text{GL}_n(\T)\rightarrow S_n$.  We first show that $\sigma$ is a weak automorphism of $M$.  Let $f\in \mathrm{Aut}(Q_M)$ be some element that maps to $\sigma$.  Explicitly this means there exists some $A\in \text{GL}_n(\T)$ which maps to $\sigma$ satisfying the following:
\begin{equation}\label{eq: commuting diagram}
\begin{tikzcd}
\mathbb{T}^n  \arrow[r,"\phi"] \arrow[d," A",swap] & Q_M \arrow[d,"f"] \\
\mathbb{T}^n  \arrow[r,"\phi"] & Q_M
\end{tikzcd}
\end{equation}
where $\phi: \T^n \rightarrow Q_M$ is the quotient map. 

As noted in (\ref{QM}), the dual module $\mathrm{Hom}(Q_M, \T)$ of $Q_M$ is $V_M$, and one can also show that dualizing the quotient map $\phi:\T^n \rightarrow Q_M$ gives the canonical embedding $\iota:V_M \rightarrow \T^n$.\footnote{See \cite[Lemma 4.7]{JMTmatroidsPart1}.} With this, we dualize \eqref{eq: commuting diagram} and obtain the following diagram:
\begin{equation}\label{eq: dual commuting diagram}
\begin{tikzcd}
\mathbb{T}^n  & V_M \arrow[l,swap, "\iota"] \\
\mathbb{T}^n \arrow[u," A^\vee"]  & V_M \arrow[l,swap, "\iota"] \arrow[u,swap,"f^\vee"] 
\end{tikzcd}
\end{equation}

In particular, $f^\vee$ is the restriction to $V_M$ of $A^\vee$.  So $A^\vee \in \text{GL}_n(\T)$ actually belongs to $\mathrm{Aut}(V_M)$.  

Observe that $A^\vee$ is simply the transpose of $A$ and so maps to $\sigma^{-1}\in S_n$. Then, since $A^\vee \in \mathrm{Aut}(V_M)$ maps to $\sigma^{-1} \in S_n$, we obtain from Theorem \ref{theorem: tropical linear space automorphisms induce weak automorphisms} that $\sigma^{-1}$ is a weak automorphism.  Hence $\sigma$ is a weak automorphism.

Conversely, let $\sigma$ be a weak automorphism of $M$.  Then there exists $\tau: [n] \rightarrow \T$ such that for all $B$ of size $d$, one has
\begin{equation}
w(\sigma(B)) = (\prod_{i\in B} \tau(i)) w(B).
\end{equation}
Let $\tau(i)=\tau_i$. Since $\underline{M}$ is simple, we have $\tau_i \neq 0_\mathbb{T}$ for all $i \in [n]$. Define a linear map 
\[
f: \T^n \rightarrow \T^n, \quad e_i \mapsto \tau_i^{-1} e_{\sigma(i)}.
\]
Then for any $S$ of size $d + 1$,
\begin{equation}\label{eq: 11}
\sum_{j\in S} w(\sigma(S) - \sigma(j)) e_{\sigma(j)} = \sum_{j\in S} w(\sigma(S) - \sigma(j)) \tau_j f(e_j) = \sum_{j\in S} \tau_j (\prod_{i\in S - j}) \tau_i w(S - j) f(e_j).
\end{equation}
The same computation with the $k^{\text{th}}$ term deleted yields
\begin{equation}\label{eq: 22}
\sum_{j\in S - k} w(\sigma(S) - \sigma(j)) e_{\sigma(j)} = \sum_{j\in S - k} \tau_j (\prod_{i\in S - j}) \tau_i w(S - j) f(e_j).
\end{equation}
The left sides of the equations \eqref{eq: 11} and \eqref{eq: 22} may be rewritten as sums over $\sigma(S)$, for instance, as follows 
\[
\sum_{i\in \sigma(S)} w(\sigma(S) - i) e_i,
\]
and both of these sums are equivalent in $Q_M$ via the bend relations associated to $\sigma(S)$.

Hence in $Q_M$, the bend relations of (either side of the equation)
\begin{equation}
     (\prod_{i\in S} \tau_i) \sum_{j\in S} w(S - j) f(e_j) = \sum_{j\in S} w(\sigma(S) - \sigma(j)) e_{\sigma(j)}
\end{equation}
are satisfied for any $S \subseteq [n]$ of size $d+1$. Dividing out the constant term, we obtain
\begin{equation}
    \sum_{j\in S} w(S - j) f(e_j) \sim \sum_{j\in S - k} w(S - j) f(e_j),
\end{equation}
where $\sim$ is the congruence defining $Q_M$.  Thus for each of the pairs $(a, b)$ generating the congruence $\sim$, we have $f(a) \sim f(b)$.  Since we obtain a congruence $\equiv$ by defining $a \equiv b$ if $f(a) \sim f(b)$, we see that $\sim \subseteq \equiv$.  Hence $a \sim b$ implies $f(a) \sim f(b)$.  So $f$ descends to a well-defined map $Q_M \rightarrow Q_M$.  This map is of course an isomorphism, because we may apply the same argument to $f^{-1}$ (with $\sigma$ replaced by $\sigma^{-1}$ and $\tau$ replaced appropriately). This yields an element $f\in \mathrm{Aut}(Q_M)$ whose image is $\sigma\in S_n$.
\end{proof}

\begin{myeg}
The requirement that $\underline{M}$ be simple cannot be dropped.  For example, let $M$ be the uniform valuated matroid of rank $1$, i.e. $w(B) = 1$ for every singleton $B$.  Clearly $\mathrm{Aut}_w(M) = S_n$.  The bend relations defining $Q_M$ have the form $e_i \sim e_j$ so $Q_M = \T$.  Thus the image of $\mathrm{Aut}(Q_M)$ in any finite group is trivial, so the conclusion of the above proposition does not hold in this case. 
\end{myeg}

\begin{myeg}
Since there are non-weakly isomorphic valuated matroids with the same underlying matroid, valuated matroid representations are more subtle than matroidal representation. For instance, see \cite[Example 2.5]{dress1992valuated} for two valuated matroids whose underlying matroid is $U_{2,4}$ but not projectively equivalent.
\end{myeg}

The following example shows that $\text{Aut}_w(M)$ is strictly smaller than $\text{Aut}(\underline{M})$ in general, but it does not have to be trivial. 

\begin{myeg}\label{example: last one}
Consider the following labeling of the bases of $U_{2,4}$:
\[
B_1=\{1,2\}, \quad B_2=\{1,3\}, \quad B_3=\{1,4\}, \quad B_4=\{2,3\}, \quad B_5=\{2,4\}, \quad B_6=\{3,4\}.
\]
Then $\sigma=(1234) \in \text{Aut}(U_{2,4})$.

Let $(M,w)$ be the valuated matroid whose underlying matroid is $U_{2,4}$, where
\[
w(B_1)=-2, \quad w(B_2)=0, \quad w(B_3)=0, \quad w(B_4)=0, \quad w(B_5)=0, \quad w(B_6)=-1.
\]
This defines a realizable valuated matroid (in \cite[Example 2.5]{dress1992valuated}).
 
We claim that $\sigma \not \in \text{Aut}_w(M)$. For the sake of contradiction, suppose that $\sigma \in \text{Aut}_w(M)$. Then, there exists a function
\[
\tau:\{1,2,3,4\} \to \mathbb{R}
\]
such that
\[
w(\sigma(B_i))=(\prod_{a \in B_i}\tau(a))w(B_i), \quad \forall i.
\]
Let $x_a:=\tau(a)$, and we use usual addition for the tropical multiplication below. Now, for $B_1$, we have
\[
w(\sigma(B_1))=w(B_4)=(x_1+x_2)+w(B_1) \iff x_1+x_2=w(B_4)-w(B_1)=2
\]
For $B_2$, we have
\[
w(\sigma(B_2))=w(B_5)=(x_1+x_3)+w(B_2) \iff x_1+x_3=w(B_5)-w(B_2)=0
\]
For $B_3$, we have
\[
w(\sigma(B_3))=w(B_1)=(x_1+x_4)+w(B_3) \iff x_1+x_4=w(B_1)-w(B_3)=-2
\]
For $B_4$, we have
\[
w(\sigma(B_4))=w(B_6)=(x_2+x_3)+w(B_4) \iff x_2+x_3=w(B_6)-w(B_4)=-1
\]
For $B_5$, we have
\[
w(\sigma(B_5))=w(B_2)=(x_2+x_4)+w(B_5) \iff x_2+x_4=w(B_2)-w(B_5)=0
\]
For $B_6$, we have
\[
w(\sigma(B_6))=w(B_3)=(x_3+x_4)+w(B_6) \iff x_3+x_4=w(B_3)-w(B_6)=1
\]
However, the resulting linear system of equations is inconsistent and hence $\sigma \not \in \text{Aut}_w(M)$. Similarly, one can check that $\sigma=(13)(24) \in \text{Aut}_w(M)$. Hence in this case $\text{Aut}_w(M)$ is a nontrivial proper subgroup of $\text{Aut}(\underline{M})$. 
\end{myeg}

\bibliography{Vectorbundle}\bibliographystyle{alpha}

\begin{thebibliography}{JMT24}

\bibitem[BJ24]{borger2024facets}
James Borger and Jaiung Jun.
\newblock Facets of module theory over semirings.
\newblock {\em arXiv preprint arXiv:2405.18645}, 2024.

\bibitem[Bro12]{brown2012cohomology}
Kenneth~S Brown.
\newblock {\em Cohomology of groups}, volume~87.
\newblock Springer Science \& Business Media, 2012.

\bibitem[DW92]{dress1992valuated}
Andreas~WM Dress and Walter Wenzel.
\newblock Valuated matroids.
\newblock {\em Advances in Mathematics}, 93(2):214--250, 1992.

\bibitem[Fin13]{fink2013tropical}
Alex Fink.
\newblock Tropical cycles and {C}how polytopes.
\newblock {\em Beitr{\"a}ge zur Algebra und Geometrie/Contributions to Algebra
  and Geometry}, 54:13--40, 2013.

\bibitem[Fre13]{frenk2013tropical}
Bart Frenk.
\newblock Tropical varieties, maps and gossip.
\newblock {\em Ph.D. thesis, Technische Universiteit Eindhoven}, 2013.

\bibitem[GG16]{giansiracusa2016equations}
Jeffrey Giansiracusa and Noah Giansiracusa.
\newblock Equations of tropical varieties.
\newblock {\em Duke Mathematical Journal}, 165(18):3379--3433, 2016.

\bibitem[GG18]{giansiracusa2018grassmann}
Jeffrey Giansiracusa and Noah Giansiracusa.
\newblock A {G}rassmann algebra for matroids.
\newblock {\em manuscripta mathematica}, 156(1-2):187--213, 2018.

\bibitem[GM20]{giansiracusa2020matroidal}
Noah Giansiracusa and Jacob Manaker.
\newblock Matroidal representations of groups.
\newblock {\em Advances in Mathematics}, 366:107089, 2020.

\bibitem[GP97]{gaubert1997methods}
St{\'e}phane Gaubert and Max Plus.
\newblock Methods and applications of (max,+) linear algebra.
\newblock In {\em STACS 97: 14th Annual Symposium on Theoretical Aspects of
  Computer Science L{\"u}beck, Germany February 27--March 1, 1997 Proceedings
  14}, pages 261--282. Springer, 1997.

\bibitem[Hor78]{horne1978automorphism}
JG~Horne.
\newblock On the automorphism group of a cone.
\newblock {\em Linear Algebra and its Applications}, 21(2):111--121, 1978.

\bibitem[IS23]{iezzi2023tropical}
Giulia Iezzi and Victoria Schleis.
\newblock Tropical quiver {G}rassmannians.
\newblock {\em arXiv preprint arXiv:2311.18648}, 2023.

\bibitem[JLV24]{jarra2024quiver}
Manoel Jarra, Oliver Lorscheid, and Eduardo Vital.
\newblock Quiver matroids--{M}atroid morphisms, quiver {G}rassmannians, their
  {E}uler characteristics and $\mathbb{F}_1$-points.
\newblock {\em arXiv preprint arXiv:2404.09255}, 2024.

\bibitem[JMT23]{JMT20}
Jaiung Jun, Kalina Mincheva, and Jeffrey Tolliver.
\newblock Vector bundles on tropical schemes.
\newblock {\em Journal of Algebra}, 637:1--46, 2023.

\bibitem[JMT24]{JMTmatroidsPart1}
Jaiung Jun, Kalina Mincheva, and Jeffrey Tolliver.
\newblock Representation theory over semifields and matroidal representations.
\newblock {\em preprint}, 2024.

\bibitem[Mol88]{moller1988theorie}
Pierre Moller.
\newblock {\em Th{\'e}orie alg{\'e}brique des syst{\`e}mes {\`a}
  {\'e}v{\'e}nements discrets}.
\newblock PhD thesis, {\'E}cole Nationale Sup{\'e}rieure des Mines de Paris,
  1988.

\bibitem[MP24]{marcus2024tropical}
Steffen Marcus and Cameron Phillips.
\newblock Tropical subrepresentations of the {B}oolean regular representation
  in low dimension.
\newblock {\em arXiv preprint arXiv:2410.08349}, 2024.

\bibitem[MS21]{maclagan2021introduction}
Diane Maclagan and Bernd Sturmfels.
\newblock {\em Introduction to tropical geometry}, volume 161.
\newblock American Mathematical Society, 2021.

\bibitem[Ter10]{terras2010zeta}
Audrey Terras.
\newblock {\em Zeta functions of graphs: a stroll through the garden}, volume
  128.
\newblock Cambridge University Press, 2010.

\bibitem[Wag91]{wagneur1991moduloids}
Edouard Wagneur.
\newblock Modul\"oids and pseudomodules 1. dimension theory.
\newblock {\em Discrete Mathematics}, 98(1):57--73, 1991.

\bibitem[Wei94]{weibel1994introduction}
Charles~A Weibel.
\newblock {\em An introduction to homological algebra}.
\newblock Number~38. Cambridge university press, 1994.

\end{thebibliography}

\end{document}